\newtheorem{proposition}{Proposition}
\newtheorem{theorem}[proposition]{Theorem}
\newtheorem{lemma}[proposition]{Lemma}
\newtheorem{corollary}[proposition]{Corollary}
\theoremstyle{remark}
\newtheorem{remark}[proposition]{Remark}
\theoremstyle{definition}
\newtheorem{definition}[proposition]{Definition}
\numberwithin{equation}{section}
\numberwithin{proposition}{section}
\numberwithin{figure}{section}
\numberwithin{table}{section}
\newcommand{\N}{\mathbb{N}}
\newcommand{\R}{\mathbb{R}}
\newcommand{\E}{\mathbb{E}}
\newcommand{\Rd}{{\mathbb{R}^d}}
\renewcommand{\S}{\mathbf{S}}
\newcommand{\ep}{\varepsilon}
\newcommand{\eps}{\varepsilon}
\renewcommand{\le}{\leqslant}
\renewcommand{\ge}{\geqslant}
\renewcommand{\leq}{\leqslant}
\renewcommand{\geq}{\geqslant}
\renewcommand{\subset}{\subseteq}
\renewcommand{\bar}{\overline}
\newcommand{\Ll}{\left}
\newcommand{\Rr}{\right}
\renewcommand{\d}{\mathrm{d}}
\newcommand{\dr}{\partial}
\newcommand{\msf}{\mathsf}
\newcommand{\al}{\alpha}
\DeclareMathOperator{\tr}{tr}
\newcommand{\la}{\left\langle}
\newcommand{\ra}{\right\rangle}
\renewcommand{\H}{\mathsf{H}}
\newcommand{\diag}{\mathfrak{d}}
\begin{document}

\author[H.-B. Chen]{Hong-Bin Chen}
\address[H.-B. Chen]{Courant Institute of Mathematical Sciences, New York University, New York, New York, USA}
\email{hbchen@cims.nyu.edu}

\author[J.-C. Mourrat]{Jean-Christophe Mourrat}
\address[J.-C. Mourrat]{Courant Institute of Mathematical Sciences, New York University, New York, New York, USA; ENS Lyon and CNRS, Lyon, France}
\email{jean-christophe.mourrat@ens-lyon.fr}

\author[J. Xia]{Jiaming Xia}
\address[J. Xia]{Department of Mathematics, University of Pennsylvania, Philadelphia, Pennsylvania, USA}
\email{xiajiam@sas.upenn.edu}

\keywords{}
\subjclass[2010]{}
\date{\today}

\title{Statistical inference of finite-rank tensors}

\begin{abstract}
We consider a general statistical inference model of finite-rank tensor products. For any interaction structure and any order of tensor products, we identify the limit free energy of the model in terms of a variational formula. Our approach consists of showing first that the limit free energy must be the viscosity solution to a certain Hamilton-Jacobi equation.
\end{abstract}

\maketitle


%
%
%
%
%
%

\section{Introduction} 

\subsection{Setting}
\label{s.setting}
Let $K,L,\mathsf{p}\in\N$ and $A\in \R^{K^\mathsf{p}\times L}$, which will be kept fixed throughout the paper. For every $N\in \N$, $t\geq 0$ and a random matrix $X\in \R^{N\times K}$, we consider the inference task of recovering $X$ from the observation of
\begin{align}\label{e.Y}
    Y := \sqrt{\frac{2t}{N^{\mathsf{p}-1}}}X^{\otimes \mathsf{p}}A+W \  \in \R^{N^\mathsf{p}\times L},
\end{align}
where $\otimes$ denotes the tensor product of matrices, and $W\in \R^{N^\mathsf{p}\times L}$, {independent of the randomness of $X$}, consists of independent standard Gaussian entries (we view $X^{\otimes \mathsf{p}}$ as an $N^\mathsf{p}$-by-$K^\mathsf{p}$ matrix). Throughout, the dot product between two vectors or matrices of the same size is the entry-wise inner product. The associated norm is denoted by $|\cdot|$. For convenience of analysis, we assume that the random matrix $X$ almost surely satisfies 
\begin{align}\label{e.supp}
    |X|\leq \sqrt{NK}.
\end{align}
{For instance, \eqref{e.supp} is satisfied if every entry of $X$ has its absolute value bounded by $1$.}
We denote the law of $X$ by $P_N^X$.
Using Bayes' rule, the law of $X$ conditioned on observing~$Y$ is the  measure proportional to $e^{H^\circ_N(t,x)} \, \d P^X_N(x)$, where the Hamiltonian $H^\circ_N$ is 
\begin{equation*}
H^\circ_N(t,x) := \sqrt{\frac{2t}{N^{\mathsf{p}-1}}}(x^{\otimes \mathsf{p}}A)\cdot Y-\frac{t}{N^{\mathsf{p}-1}}|x^{\otimes \mathsf{p}}A|^2.
\end{equation*}
The associated free energy is given by 
\begin{equation*}
    F^\circ_N(t) := \frac{1}{N} \log \int_{\R^{N\times K}}e^{H^\circ_N(t,x)}\, \d P^X_N(x).
\end{equation*}
The mutual information $I(X,Y)$ between $X$ and $Y$ is an important information-theoretical quantity, which is equal to $\E F^\circ_N(t)$ up to a simple additive term. Computing the limit of the mutual information as $N\to\infty$ allows one to determine the critical value of $t$ below which the inference task is theoretically impossible. Therefore, the limit of $\E F^\circ_N(t)$ is the central object of investigation in many inference models. For more details, we refer to the discussion in \cite{barbier2016}.

In order to analyze this model, we start by enriching the system by adding an additional observation $\bar Y=X\sqrt{2h}+Z$ for $h\in\S^K_+$, where $\S^K_+$ is the set of $K\times K$ symmetric positive semi-definite matrices, and {$Z\in \R^{N\times K}$, independent of all other sources of randomness previously introduced,} consists of i.i.d.\ standard Gaussian entries. Then, the law of $X$ conditioned on observing $Y$ and $\bar Y$ is a Gibbs measure {proportional to $e^{H_N(t,h,x)}\d P_N^X(x)$} with Hamiltonian
\begin{align*}
    H_N(t,h,x) := H^\circ_N(t,x)+ \sqrt{2h}\cdot(x^\intercal \bar Y)-h\cdot(x^\intercal x).
\end{align*}
The corresponding free energy is 
\begin{align}\label{e.F_N}
    F_N(t,h) := \frac{1}{N}\log\int_{\R^{N\times K}} e^{H_N(t,h,x)} \, \d P^X_N(x).
\end{align}
We also set $\bar F_N = \E F_N$. Note that the initial free energy satisfies $F^\circ_N(t) = F_N(t,0)$. 
We let $\H:\S^K_+\to \R$ be the mapping such that, for every $q \in \S^K_+$,
\begin{align}\label{def.H}
    \H(q) := (AA^\intercal)\cdot q^{\otimes \mathsf{p}}.
\end{align}
Our main result is the identification of the limit free energy, for any given choice of interaction matrix $A$ and $\mathsf{p} \in \N$. 
\begin{theorem}\label{main_thm}
In addition to \eqref{e.supp}, suppose that
\begin{itemize}
    \item $\big(\bar F_N(0,\cdot)\big)_{N\in\N}$ converges pointwise to some $C^1$ function $\psi:\S^K_+\to \R$;
    \item $\lim_{N\to \infty}\E\|F_N-\bar F_N\|^2_{L^\infty(D)}=0$ for every compact $D\subset [0,\infty)\times \S^K_+$.
\end{itemize}
Then, for every $(t,h)\in[0,\infty)\times \S^K_+$, we have
\begin{align}
\label{e.main_thm}
    \lim_{N\to\infty}\bar F_N(t,h) = \sup_{h''\in\S^K_+}\inf_{h'\in\S^K_+}\big\{h''\cdot (h-h')+\psi(h')+t\H(h'')\big\}.
\end{align}
\end{theorem}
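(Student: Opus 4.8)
The plan is to show that $\bar F_N$ is an approximate solution of the Hamilton--Jacobi equation $\partial_t f = \H(\nabla_h f)$ on $(0,\infty)\times\S^K_+$ with initial datum $f(0,\cdot)=\psi$, to pass to the limit using the two hypotheses, and then to identify the limit with the right-hand side of \eqref{e.main_thm}, which is exactly the Hopf formula for this equation.

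The first step is a set of derivative identities and a priori bounds. Differentiating $F_N$ in $t$ and in $h$, integrating by parts in the Gaussian fields $W$ and $Z$, and applying the Nishimori identity, one obtains
\begin{equation*}
\partial_t \bar F_N(t,h) = \E\la\H(R)\ra_{t,h}, \qquad \nabla_h\bar F_N(t,h) = \E\la R\ra_{t,h} = N^{-1}\E\big[\la x\ra^\intercal \la x\ra\big],
\end{equation*}
where $\la\cdot\ra_{t,h}$ is the Gibbs average attached to $H_N(t,h,\cdot)$ and $R := N^{-1}x^\intercal X$ is the overlap between a sample $x$ and the signal $X$; here we used the elementary identity $(x^{\otimes\mathsf{p}}A)\cdot(X^{\otimes\mathsf{p}}A) = N^{\mathsf{p}}\,\H\big(N^{-1}x^\intercal X\big)$, which follows from $(x^{\otimes\mathsf{p}})^\intercal X^{\otimes\mathsf{p}} = (x^\intercal X)^{\otimes\mathsf{p}}$. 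The support condition \eqref{e.supp} makes $\bar F_N$ finite and Lipschitz on $[0,\infty)\times\S^K_+$ uniformly in $N$; the displayed formula shows $\nabla_h\bar F_N\in\S^K_+$, so $\bar F_N$ is nondecreasing along the cone, and moreover, by the I-MMSE/Nishimori convexity of the free energy of the Gaussian matrix channel $\bar Y = X\sqrt{2h}+Z$ in its signal-to-noise matrix, $\bar F_N(0,\cdot)$ is convex in $h$. Hence $\psi$ is Lipschitz and convex with $\nabla\psi$ valued in $\S^K_+$, and $(\bar F_N)_{N\in\N}$ is precompact in the local uniform topology.

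Next, one closes the equation in the limit. From the identities above,
\begin{equation*}
\partial_t\bar F_N - \H(\nabla_h\bar F_N) = \E\la\H(R)\ra_{t,h} - \H\big(\E\la R\ra_{t,h}\big),
\end{equation*}
and since $\H$ is a fixed polynomial the right-hand side is bounded, on any compact set of parameters, by the fluctuations of $R$ under $\E\la\cdot\ra_{t,h}$ (both thermal and over the disorder). The point is that one need not control these fluctuations directly: adding a small auxiliary Gaussian observation with parameter $\lambda$, using the convexity of $\bar F_N$ in $\lambda$ to make $\partial^2_\lambda\bar F_N$ integrable in $\lambda$, and using the second hypothesis $\E\|F_N-\bar F_N\|^2_{L^\infty(D)}\to 0$ to pass from thermal to quenched fluctuations, one finds that the overlap fluctuations --- hence the right-hand side above --- tend to $0$ after an arbitrarily small perturbation of the parameters. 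Consequently $\bar F_N$ solves $\partial_t f=\H(\nabla_h f)$ with a locally uniform error tending to $0$, and by the first hypothesis (promoted to local uniform convergence at $t=0$ via the uniform Lipschitz bound) any subsequential local uniform limit $f$ of $(\bar F_N)_{N\in\N}$ is a viscosity solution of $\partial_t f=\H(\nabla_h f)$ on $(0,\infty)\times\S^K_+$ with $f(0,\cdot)=\psi$ and $\nabla_h f$ valued in $\S^K_+$.

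Finally, uniqueness and the Hopf representation. A comparison principle for this Hamilton--Jacobi equation posed on the cone $\S^K_+$ --- in which the monotonicity $\nabla_h f\in\S^K_+$ serves as the boundary condition on $\partial\S^K_+$ --- shows that such a solution is unique, so the whole sequence $(\bar F_N)_{N\in\N}$ converges to it. It then remains to verify that the right-hand side $g(t,h)$ of \eqref{e.main_thm} is that solution: at $t=0$ the infimum over $h'$ is the convex biconjugate $\psi^{**}=\psi$; $g$ is nondecreasing in $h$ because $\nabla_h g$ is the maximizing $h''\in\S^K_+$; and $g$ is a viscosity solution of $\partial_t g=\H(\nabla_h g)$, this being the classical fact that the Hopf formula solves such an equation whenever the initial datum is convex and $\H$ is continuous. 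By the uniqueness above, $g=f$, which is \eqref{e.main_thm}. I expect the two substantial obstacles to be, first, the step of closing the equation above --- turning the free-energy concentration of the second hypothesis into control of the overlap, together with the care required to differentiate $\bar F_N$ in $h$ near the singular part of $\partial\S^K_+$ --- and, second, the comparison principle for Hamilton--Jacobi equations on the cone $\S^K_+$ under the monotonicity constraint, which has to be proved by a doubling-of-variables argument adapted to the geometry of the cone.
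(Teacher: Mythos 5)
Your high-level skeleton---approximate Hamilton--Jacobi equation, precompactness, identification of subsequential limits as viscosity solutions, then the Hopf formula plus comparison principle---is the right one and matches the paper's structure. However, there are two genuine gaps, and they are precisely where the paper's main contribution lies.

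First, your claim that ``$\bar F_N$ solves $\partial_t f = \H(\nabla_h f)$ with a locally uniform error tending to $0$'' is not correct as stated. The error in the approximate equation is controlled by overlap fluctuations, and when one quantifies these via the second hypothesis (concentration of $F_N$) one obtains a bound that involves the Laplacian $\Delta\bar F_N$ (see Proposition~\ref{p.approx.hj}), which is \emph{not} bounded uniformly in $N$. The paper circumvents this by only needing the equation at contact points with a smooth test function $\phi$ touching $\bar F_N$ from above: at such points convexity of $\bar F_N$ traps the Hessian between $0$ and $D^2\phi$, giving the required two-sided bound (this is Step~1 in the proof of Theorem~\ref{t.cvg}, via Corollary~\ref{c.vis}). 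Your proposal does not mention any mechanism for controlling $\Delta\bar F_N$.

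Second, and more fundamentally, you assert without justification that a subsequential limit satisfying the equation a.e.\ (or on a dense set) with $\nabla_h f\in\S^K_+$ is a \emph{viscosity} solution. The subsolution half is indeed easy for convex functions. But the supersolution half is not automatic, and it is the central difficulty the paper addresses: it requires the additional structural fact that $\bar F_N$ (hence the limit $f$) has \emph{nondecreasing gradients} in the partial order of the cone (Lemma~\ref{l.F_N.order}, which you never invoke), the maximal-subgradient result Proposition~\ref{p.order}, the ``lift'' Lemma~\ref{l.lift} to deal with the boundary of $\S^K_+$, and the $C^1$ assumption on $\psi$. Without this criterion (Proposition~\ref{p.weak-tensor} / Corollary~\ref{c.vis}), the passage from ``satisfies the equation on a dense set'' to ``viscosity supersolution'' simply does not go through; Remark~\ref{r.diff.assumption} in the appendix gives an explicit convex Lipschitz function satisfying a Hamilton--Jacobi equation a.e.\ which is nonetheless not the viscosity solution. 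Your proposal, as written, would apply equally well to that counterexample.
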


\begin{remark}
The above convergence can be improved into convergence in the local uniform topology by using that $\bar F_N$ is Lipschitz uniformly over $N$ (see Lemma~\ref{l.lip_unif_N}). 
\end{remark}

We briefly comment on the hypotheses of the theorem. One can see that $F_N(0,\cdot)$ is the free energy associated with a decoupled system where the only observation $\bar Y$ is linear in~$X$. Therefore, in many cases, the limit of $\bar F_N(0,\cdot)$ can be computed straightforwardly. In particular, if $P_N^X$ is the $N$-fold tensor product of a fixed probability measure on $\R^K$, then $\bar F_N(0,\cdot)$ in fact does not depend on $N$, and is $C^1$. The next assumption can be rephrased as local uniform concentration of $F_N$. Again, this condition is straightforward to verify in many models, with standard tools available: see for instance \cite[Lemma~C.1]{HBJ} for the case when the rows of $X$ are i.i.d.\ and bounded. 

Among our assumptions, perhaps the only surprising one is the requirement that $\psi$ be of class $C^1$. For certain choices of the nonlinearity $\H$, such as when $\H$ is convex, this assumption is not necessary (see for instance \cite{HBJ}). However, when considering arbitrary choices of $A$ and $\mathsf{p}$ as we do here, this assumption may be required. In a simpler setting, we illustrate the usefulness of this assumption in Remark~\ref{r.diff.assumption}. 

\subsection{Related works}
Many inference models can be viewed as special cases of \eqref{e.Y}. Indeed, one could argue that essentially any ``fully-connected'' inference problem will have the form of \eqref{e.Y} for some suitable choice of $A$ and $\mathsf{p}$. Among them, the models where the limit free energy has been studied include the spiked Wigner model \cite{barbier2016, lelarge2019fundamental, barbier2019adaptive, mourrat2018hamilton, mourrat2019hamilton}, the spiked Wishart model \cite{miolane2017fundamental, barbier2017layered, kadmon2018statistical, luneau2020high, chen2020hamilton}, the stochastic block model (or community detection problem) \cite{lelarge2019fundamental, mayya2019mutualIEEE, reeves2019geometryIEEE}, 
the inference of second order matrix tensor products~\cite{reeves2020information}, and the inference of higher order vector tensor products \cite{lesieur2017statistical, barbier2019adaptive, mourrat2018hamilton}.
The model closest to~\eqref{e.Y} is the inference of finite-rank even-order tensor products studied in \cite{luneau2019mutual}. The case of tensors of odd order was left open there, see \cite[Section~7]{luneau2019mutual}. In Section~\ref{s.app}, we apply our main result to this model, for tensor products of arbitrary order ($\mathsf{p} \in \N$). For a more detailed discussion on these models, we refer to the introduction in~\cite{HBJ}.

Many of the results mentioned above were obtained by the powerful method of adaptive interpolation introduced in \cite{barbier2019adaptive, barbier2019adaptive2} and refined in subsequent works. In \cite{reeves2020information}, a novel extension using interpolation paths parameterized by order-preserving positive semi-definite matrices was employed to completely describe the limit in the general second order tensor products model. The order-preserving property (\cite[Proposition 4]{reeves2020information}) has a similar counterpart that plays a crucial role in this work (Lemma~\ref{l.F_N.order} and Proposition~\ref{p.weak-tensor}).

The approach taken up in the present paper is based instead on identifying the limit free energy as the viscosity solution to a certain Hamilton-Jacobi equation. This alternative approach was introduced in \cite{mourrat2018hamilton, mourrat2019hamilton}, and can also inform the analysis of spin glass models \cite{mourrat2019parisi, mourrat2020extending, mourrat2020nonconvex, mourrat2020free};  related considerations also appeared in the physics literature \cite{genovese2009mechanical, guerra2001sum,  barra2010replica, barra2013mean}. 


The setting of the present paper is identical to that of \cite{HBJ}, in which partial results were obtained. There, for general interaction matrix $A$ and order $\mathsf{p}$, only an upper bound on the limit free energy could be proved; a complete identification of this limit could only be obtained for particular choices of $A$ and $\mathsf{p}$. Here, we close this gap and cover all cases in a unified approach. 

Compared with \cite{HBJ}, the main novelty of the present paper is that we will rely on a different method for the identification of the viscosity solution. This method relies crucially on the fact that the functions under consideration are \textit{convex}. We explain this new uniqueness criterion in the simpler context of Hamilton-Jacobi equations on $[0,\infty) \times \R^d$ in the appendix. The gist of our work is then to extend this criterion to Hamilton-Jacobi equations posed on $[0,\infty) \times \S^K_+$, and then to verify that any possible limit of the free energy does satisfy this criterion.

The rest of the paper is organized as follows. In Section~\ref{s.prop.F_N}, we present basic properties of $\bar F_N$. In particular, we record that $\bar F_N$ is convex, nondecreasing, and has nondecreasing gradients. In Section~\ref{s.convex}, we recall basic facts of convex analysis and prove some useful results in preparation for the study of the Hamilton-Jacobi equation. Using these, we prove a convenient criterion for identifying viscosity solutions in Section~\ref{s.hj}. Lastly, Section~\ref{s.proof} contains the proof of Theorem~\ref{main_thm} and an application to the model \eqref{e.special}.

\subsection*{Acknowledgements}
We would like to warmly thank Stefano Bianchini for providing us with the idea for the proof of Proposition~\ref{p.weak}. JCM was partially supported by the NSF grant DMS-1954357.

\section{Properties of the free energy}\label{s.prop.F_N}

In this section, we study basic properties of $\bar F_N$. We start by introducing notation.

For any measurable $g:\R^{N\times K}\to \R^m$ {for some $m\in\N$}, we denote by $\la g(x)\ra$ the expectation of $g$, coordinatewise, with respect to the Gibbs measure proportional to $e^{H_N(t,h,x)} \, \d P^X_N(x)$, which can also be written as $\la g(x)\ra = \E [g(X)|Y,\bar Y]$ for $Y$ and $\bar Y$ introduced in the previous section. Note that the dependence of $\la\, \cdot \,\ra$ on $t,h$ is suppressed from the notation when there is no confusion. Within the bracket $\la\, \cdot \,\ra$, we denote by $x',x'',x'''$ independent copies of $x$, which are called replicas of $x$. The transpose operator on matrices is denoted by superscript $\intercal$.

In addition to $\S^K_+$, we denote by $\S^K$ and $\S^K_{++}$, the set of $K\times K$ symmetric matrices, and symmetric positive definite matrices, respectively. We view $\S^K$ as an ambient linear space for $\S^K_+$ and $\S^K_{++}$. By choosing an orthonormal basis with respect to the entry-wise dot product, we can identify $\S^K$ with $\R^{K(K+1)/2}$ isometrically. Therefore, differentiation makes sense on $\S^K$ as the usual one on Euclidean spaces. Naturally, we also identify the dual space of $\S^K$ with itself.
For a function $g:[0,\infty)\times \S^K_+\to\R$ which is differentiable at $(t',h')$, we denote by $\partial_t g(t',h')\in\R$ its derivative with respect to the first variable, and by $\nabla g(t',h')\in\S^K$ the gradient with respect to the second variable.

Using the expression \eqref{e.F_N}, we can compute that
\begin{gather}  \label{e.dt_F_N}
\dr_t \bar F_N = \frac{1}{N^{\mathsf{p}}} \E \la x^{\otimes \mathsf{p}}A \cdot x'^{\otimes\mathsf{p}} A \ra=\frac{1}{N^{\mathsf{p}}} \E\left[ \la x^{\otimes \mathsf{p}}A \ra \cdot \la x^{\otimes\mathsf{p}} A \ra\right],
\\
\label{e.grad_F_N}
\nabla \bar F_N = \frac 1 N \E \la x^\intercal x' \ra = \frac{1}{N}\E\left[\la x \ra^\intercal \la x \ra\right].
\end{gather}
This computation involves the Nishimori identity, the Gaussian integration by parts, and the independence of replicas with respect to the Gibbs measure. For details, we refer to \cite[(3.5)-(3.6)]{HBJ}.
Recalling the definition of $\H$ in \eqref{def.H}, we obtain that~$\bar F_N$ satisfies
\begin{align*}
    \partial_t \bar F_N - \H\big(\nabla \bar F_N\big) = \frac{1}{N^\mathsf{p}}\bigg(\E\big\langle\H(x^\intercal x') \big\rangle - \H\big(\E\langle x^\intercal x'\rangle \big)\bigg),
\end{align*}
and the right-hand side is expected to be small when $N$ is large. Hence, $\bar F_N$ can be viewed to approximately satisfy the Hamilton-Jacobi equation
\begin{align}\label{e.HJ_tensor}
    \partial_t f - \H(\nabla f)=0 \quad\text{in}\ [0,\infty)\times \S^K_+.
\end{align}
This is the key insight for the Hamilton-Jacobi equation approach. Later, we will show that indeed $\bar F_N$ converges to the unique solution to \eqref{e.HJ_tensor}; and then that this solution admits the variational representation appearing on the right side of \eqref{e.main_thm}.

In the remaining two subsections, we collect useful properties of derivatives of $\bar F_N$ and prove that $\bar F_N$ is convex.

\subsection{Derivatives of free energy}

We record basic results on the derivatives of $\bar F_N$.
\begin{lemma}\label{l.lip_unif_N}
For each $N\in\N$, the function $\bar F_N$ is $C^1$ and the following holds:
\begin{gather*}
    \sup_{N\in\N,\, (t,h)\in [0,\infty)\times \S^K_+}\big|(\partial_t, \nabla)\bar F_N\big|(t,h)<\infty;\\
    (\partial_t, \nabla)\bar F_N(t,h)\in [0,\infty)\times \S^K_+,\quad\forall N\in\N,\ (t,h)\in [0,\infty)\times \S^K_+.
\end{gather*}
\end{lemma}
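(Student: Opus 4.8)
The plan is to establish the three assertions in turn. The only genuinely delicate point is the regularity of $\bar F_N$ \emph{up to the boundary} of $[0,\infty)\times\S^K_+$, where the Hamiltonian $H_N$ itself fails to be differentiable in $(t,h)$, owing to the $\sqrt t$ and the matrix square root $\sqrt{2h}$ that enter the noise terms. First I would treat the interior. For each fixed realization of $X$, $W$, $Z$, the map $t\mapsto\sqrt{2t/N^{\mathsf{p}-1}}$ is smooth on $(0,\infty)$ and the matrix square root $h\mapsto\sqrt{2h}$ is real-analytic on $\S^K_{++}$, so $(t,h)\mapsto H_N(t,h,x)$ is smooth on $(0,\infty)\times\S^K_{++}$ for every $x$, and all of its $(t,h)$-derivatives are polynomial in $x$, hence bounded on the support $\{|x|\le\sqrt{NK}\}$ of $P^X_N$ (by \eqref{e.supp}), locally uniformly in $(t,h)$. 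This legitimates differentiating under the integral defining $F_N$ and then under the expectation defining $\bar F_N$, so $\bar F_N$ is smooth on $(0,\infty)\times\S^K_{++}$, and there its first derivatives are given by \eqref{e.dt_F_N} and \eqref{e.grad_F_N}.

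Next I would push continuity of the derivatives up to the boundary. For fixed disorder, the Gibbs mean $\la g(x)\ra$ of a bounded measurable $g$ is a ratio of two integrals of $e^{H_N(t,h,x)}$ against $\d P^X_N(x)$; the integrand is continuous in $(t,h)$ and bounded on compacts (again by \eqref{e.supp}), while the denominator is bounded below by $\exp\big(\int H_N\,\d P^X_N\big)>0$, so $(t,h)\mapsto\la g(x)\ra$ is continuous on all of $[0,\infty)\times\S^K_+$ and bounded. Dominated convergence in the disorder then shows that the right-hand sides of \eqref{e.dt_F_N} and \eqref{e.grad_F_N}, as well as $\bar F_N$ itself, are continuous on $[0,\infty)\times\S^K_+$. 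Since that domain is convex, a standard fundamental-theorem-of-calculus argument along segments joining an interior point to a boundary point upgrades ``$C^1$ in the interior with gradient extending continuously to the boundary'' to ``$\bar F_N$ is $C^1$ on $[0,\infty)\times\S^K_+$'', with the derivative formulas \eqref{e.dt_F_N}--\eqref{e.grad_F_N} valid everywhere (one-sided derivatives understood at the boundary).

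It then remains to read off the signs and the uniform bounds from these formulas. From \eqref{e.dt_F_N}, $\partial_t\bar F_N=\frac1{N^{\mathsf{p}}}\E\big|\la x^{\otimes\mathsf{p}}A\ra\big|^2\ge0$; from \eqref{e.grad_F_N}, $\nabla\bar F_N=\frac1N\E\big[\la x\ra^\intercal\la x\ra\big]\in\S^K_+$ since $\la x\ra^\intercal\la x\ra\in\S^K_+$ for every realization and $\S^K_+$ is a closed convex cone. For the bounds, using that the Frobenius norm is multiplicative under Kronecker products, $|x^{\otimes\mathsf{p}}|=|x|^{\mathsf{p}}\le(NK)^{\mathsf{p}/2}$ on the support of $P^X_N$; together with $|x^{\otimes\mathsf{p}}A|\le|x^{\otimes\mathsf{p}}|\,|A|$ and Jensen's inequality for $\la\cdot\ra$, this gives $\big|\la x^{\otimes\mathsf{p}}A\ra\big|\le(NK)^{\mathsf{p}/2}|A|$, hence $0\le\partial_t\bar F_N\le K^{\mathsf{p}}|A|^2$; likewise $\big|\la x\ra^\intercal\la x\ra\big|\le|\la x\ra|^2\le\la|x|\ra^2\le NK$, hence $|\nabla\bar F_N|\le K$, both bounds being independent of $N$ and of $(t,h)$. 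I expect the only step requiring real care to be the passage to the boundary $\{t=0\}\cup\big([0,\infty)\times(\S^K_+\setminus\S^K_{++})\big)$ in the second paragraph, where $H_N$ is not differentiable in $(t,h)$ but $\bar F_N$ nonetheless is; this is exactly resolved by the observation that the (a priori only interior) identities \eqref{e.dt_F_N}--\eqref{e.grad_F_N} extend continuously, which in turn rests on the Gaussian-integration-by-parts and Nishimori computation recalled before the lemma.
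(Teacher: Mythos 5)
Your argument is correct and follows the same approach as the paper's, which simply cites the derivative formulas \eqref{e.dt_F_N}--\eqref{e.grad_F_N} together with the bound \eqref{e.supp} and leaves the rest implicit. You have carefully filled in the details the paper omits — in particular the issue of $C^1$ regularity up to the boundary of $[0,\infty)\times\S^K_+$, where $H_N$ itself is not differentiable in $(t,h)$, resolved by noting that the Gibbs-mean expressions on the right of \eqref{e.dt_F_N} and \eqref{e.grad_F_N} extend continuously — and the explicit uniform bounds $\partial_t\bar F_N\le K^{\mathsf p}|A|^2$ and $|\nabla\bar F_N|\le K$ are all correct.
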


\begin{proof}
{It follows from \eqref{e.dt_F_N} and \eqref{e.grad_F_N}, along with the assumption~\eqref{e.supp}.}
\end{proof}

The first display in Lemma~\ref{l.lip_unif_N} ensures that $\bar F_N$ is Lipschitz uniformly in $N$. The second display indicates that $(\partial_t, \nabla)\bar F_N$ is  ``nonnegative'' in the sense of the following partial orders.
On $\S^K$ and on $\R\times \S^K$, we declare
\begin{align}
    h_1 \leq h_2 \quad&\Longleftrightarrow\quad h_2 - h_1 \in \S^K_+;\label{e.partial_oder_1}\\
    (t_1, h_1)\leq (t_2,h_2) \quad&\Longleftrightarrow\quad (t_2,h_2)-(t_1, h_1)\in [0,\infty)\times \S^K_+.\label{e.partial_oder_2}
\end{align}
{As a consequence of Lemma~\ref{l.lip_unif_N} and the mean value theorem, we have that
\begin{align}\label{e.bar_F_nondec}
    \bar F_N\text{ is nondecreasing,}\quad\forall N
\end{align}
in the sense given in \eqref{e.partial_oder_2}.}

The next result shows that $(\partial_t, \nabla)\bar F_N$ is ``nondecreasing''.

\begin{lemma}\label{l.F_N.order}
For each $N\in\N$, for every $(t_1,h_1)\leq (t_2,h_2)$, it holds that
\begin{align*}
    (\partial_t,\nabla )\bar F_N(t_1,h_1)\leq (\partial_t,\nabla )\bar F_N(t_2,h_2).
\end{align*}
\end{lemma}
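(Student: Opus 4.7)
The plan is to reinterpret the first derivatives of $\bar F_N$ as second moments of Bayesian posterior means, and then deduce the desired monotonicity from a Gaussian-channel-additivity coupling combined with the tower property and a matrix-valued Jensen inequality. By \eqref{e.dt_F_N}--\eqref{e.grad_F_N} and the Nishimori identity already invoked there, one has
\begin{align*}
    \nabla \bar F_N(t,h) = \frac{1}{N}\E\bigl[M(t,h)^\intercal M(t,h)\bigr],\qquad \partial_t \bar F_N(t,h) = \frac{1}{N^{\mathsf{p}}}\E\bigl|\E[X^{\otimes \mathsf{p}}A\mid Y,\bar Y]\bigr|^{2},
\end{align*}
where $M(t,h):=\la x\ra = \E[X\mid Y,\bar Y]$ is the posterior mean. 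So it suffices to show that these second-moment functionals are monotone in the partial orders \eqref{e.partial_oder_1}--\eqref{e.partial_oder_2}; by interpolating through $(t_1,h_1)\le (t_2,h_1)\le (t_2,h_2)$ I can handle the $h$- and $t$-coordinates separately.

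For the $h$-coordinate I would fix $h_2 = h_1 + d$ with $d\in \S^K_+$ and enrich the setting by two additional, independent observations $\bar Y^{(1)} = X\sqrt{2h_1} + Z^{(1)}$ and $\bar Y^{(d)} = X\sqrt{2d} + Z^{(d)}$, where $Z^{(1)}$ and $Z^{(d)}$ have i.i.d.\ standard Gaussian entries and are independent of everything else. Expanding the joint Gaussian log-density of $(\bar Y^{(1)},\bar Y^{(d)})$ given $X$ shows that it depends on the data only through the sufficient statistic $T^{(1,d)} := \bar Y^{(1)}\sqrt{2h_1} + \bar Y^{(d)}\sqrt{2d}$, and a short computation gives $T^{(1,d)} = X(2h_2) + \tilde Z$, where the rows of $\tilde Z$ are i.i.d.\ centered Gaussians of covariance $2h_2$. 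The analogous calculation for a single observation $\bar Y^{(2)} = X\sqrt{2h_2} + Z^{(2)}$ at parameter $h_2$ produces the sufficient statistic $T^{(2)} = X(2h_2) + Z^{(2)}\sqrt{2h_2}$, whose conditional law given $X$ agrees with that of $T^{(1,d)}$. Hence $(X,T^{(1,d)})$ and $(X,T^{(2)})$ have the same joint distribution, so the posterior means $\E[X\mid \bar Y^{(1)},\bar Y^{(d)}]$ and $\E[X\mid \bar Y^{(2)}]$ share their joint law with $X$, and in particular equal second-moment matrices.

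Once the coupling is set up, the monotonicity follows from tower and matrix Jensen applied to $v\mapsto v^\intercal v$. Writing $M^{(1,d)} := \E[X\mid Y,\bar Y^{(1)},\bar Y^{(d)}]$, tower gives $M(t,h_1) = \E[M^{(1,d)}\mid Y,\bar Y^{(1)}]$, so that
\[
    M(t,h_1)^\intercal M(t,h_1) \preceq \E\bigl[(M^{(1,d)})^\intercal M^{(1,d)}\,\big|\,Y,\bar Y^{(1)}\bigr] \quad \text{in } \S^K_+.
\]
Taking outer expectations and using the distributional equality of the previous paragraph yields $\nabla \bar F_N(t,h_1) \le \nabla \bar F_N(t,h_2)$. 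The very same argument with the scalar convex function $v\mapsto |v|^2$ (applied to the image $\E[X^{\otimes\mathsf{p}}A\mid \cdot]$) handles $\partial_t \bar F_N$, and monotonicity in the $t$-coordinate is obtained by an entirely analogous additivity argument applied to the tensor observation $Y = \sqrt{2t/N^{\mathsf{p}-1}}X^{\otimes\mathsf{p}}A + W$, exploiting the Gaussianity of $W$.

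The main obstacle I anticipate is the bookkeeping in the sufficient-statistic step, in particular checking that the aggregated noise $\tilde Z$ in $T^{(1,d)}$ really does have row-covariance $2h_2$ and that the symmetric matrix square roots behave as expected when $h$ is only positive semidefinite (as opposed to strictly positive); once this identity is clean the tower-plus-matrix-Jensen closure is standard.
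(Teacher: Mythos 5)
The proposal is correct and takes essentially the same approach as the paper: the key distributional identity (that conditioning on an extra independent Gaussian observation at parameter increment $d=h_2-h_1$ reproduces the posterior at $h_2$) is established via a sufficient-statistic computation rather than the paper's direct Hamiltonian comparison, and the conclusion is closed by the tower property plus matrix Jensen rather than the paper's explicit bias--variance decomposition, but these are two phrasings of the same mechanism, and the interpolation through $(t_1,h_1)\le(t_2,h_1)\le(t_2,h_2)$ merely splits into two steps what the paper handles jointly.
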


\begin{proof}

For $k =1,2$, we set
\begin{align*}
     Y_k  :=\left(\sqrt{\frac{2t_k}{N^{\mathsf{p}-1}}}X^{\otimes \mathsf{p}}A + W_k  \,,\  X\sqrt{2h_k}+Z_k\right)
\end{align*}
where $W_k$ and $Z_k$ consist of i.i.d.\ standard Gaussian random variables. For $k=1,2$, denoting $\la\,\cdot\,\ra$ evaluated at $(t_k,h_k)$ by $\la \,\cdot\,\ra_{k}$, we have
\begin{align}\label{e.<g>_k}
    \la g(x)\ra_{k} = \E[g(X)\,|\,Y_k]
\end{align}
for any measurable function $g$ satisfying $\E|g(X)|<\infty$. 
For any matrix $y$, we write $\,\mathbf{c}(y) :=  y^\intercal y$. Note that $\,\mathbf{c}(X^{\otimes \mathsf{p}}A) \in \R^{L\times L}$ and $\,\mathbf{c}(X) \in \R^{K\times K}$. 
Then, we have
\begin{align*}
    (\partial_t,\nabla)\bar F_N(t_k,h_k) = \E\left(\frac{1}{N^\mathsf{p}} \tr  \,\mathbf{c}\left(\la X^{\otimes \mathsf{p}}A \ra_k\right),\ \frac{1}{N}  \mathbf{c}\left(\la X\ra_k\right)\right).
\end{align*}
Hence, it suffices to show that, for any measurable $g$ satisfying $\E|g(X)|<\infty$,
\begin{align}\label{eq:cd_Y_1<cd_Y_2}
    \E \,\mathbf{c}\left(\la g(X)\ra_1\right) \leq \E \,\mathbf{c}\left(\la g(X)\ra_2\right).
\end{align}
Indeed, in view of the previous display, the desired result follows from taking $g$ to be $g(q) = q^{\otimes \mathsf{p}}A$ and then the identity map.

To compare the two sides in \eqref{eq:cd_Y_1<cd_Y_2}, we introduce
\begin{align*}
     Y' := \left(\sqrt{\frac{2t_2-2t_1}{N^{\mathsf{p}-1}}}X^{\otimes \mathsf{p}}A + W'  ,\  X\sqrt{2h_2-2h_1}+Z'\right),
\end{align*}
where $W'$ and $Z'$ have i.i.d.\ standard Gaussian entries, {independent of randomness previously introduced}. We claim that
\begin{align}\label{eq:id_cd_exp}
     \E[g(X)\,|\,Y_2] \stackrel{\mathrm{d}}{=} \E[g(X)\,|\,Y_1, Y'],
\end{align}
where the equality holds in the sense of probability distributions.
Temporarily assuming this, and using that $\E[g( X)\,|\, Y_1] = \E\big[ \E [g(X)\,|\, Y_1,Y']\,\big|\,Y_1\big]$, we can verify, analogously to a bias-variance decomposition, that
\begin{align*}
    \E \,\mathbf{c}\big(\E[g(X)\,|\, Y_1,Y']\big) = \E \,\mathbf{c}\big(\E[g(X)\,|\, Y_1,Y']-\E[g(X)\,|\, Y_1]\big)+\E \,\mathbf{c}\big(\E[g(X)\,|\, Y_1]\big).
\end{align*}
Since the first term on the right is a positive semi-definite matrix, we get that
\begin{align*}
    \E \,\mathbf{c}\big(\E[g(X)\,|\, Y_1,Y']\big) \geq\E \,\mathbf{c}\big(\E[g(X)\,|\, Y_1]\big).
\end{align*}
In view of \eqref{e.<g>_k} and \eqref{eq:id_cd_exp}, this yields \eqref{eq:cd_Y_1<cd_Y_2} and thus the desired result.

It remains to prove \eqref{eq:id_cd_exp}. The quantities on both sides can be written as integrations of $f$ with respect to Gibbs measures with a common reference measure $P^X_N$ (the law of~$X$). Hence, it suffices to compare the Hamiltonians. The Hamiltonian for the left-hand side can be computed to be
\begin{gather*}
    \frac{2t_2}{N^{\mathsf{p}-1}}(x^{\otimes\mathsf{p}}A)\cdot (X^{\otimes  \mathsf{p}}A)+\frac{1}{\sqrt{N^{\mathsf{p}-1}}}(x^{\otimes \mathsf{p}}A)\cdot \sqrt{2t_2} W_2 - \frac{t_2}{N^{\mathsf{p}-1}}|x^{\otimes \mathsf{p}}A|^2\\+
    {2}h_2\cdot (x^\intercal X)+  (Z_2\sqrt{2h_2})\cdot x- {h_2\cdot (x^\intercal x)},
\end{gather*}
while the Hamiltonian for the right-hand side is
\begin{gather*}
    \frac{2t_2}{N^{\mathsf{p}-1}}(x^{\otimes\mathsf{p}}A)\cdot (X^{\otimes  \mathsf{p}}A)+\frac{1}{\sqrt{N^{\mathsf{p}-1}}}(x^{\otimes\mathsf{p}}A)\cdot \Big(\sqrt{2t_1}W_1+\sqrt{2t_2-2t_1} W'\Big) - \frac{t_2}{N^{\mathsf{p}-1}}|x^{\otimes \mathsf{p}}A|^2\\ + {2}h_2\cdot (x^\intercal X)+  \Big(Z_1\sqrt{2h_1} + Z'\sqrt{2h_2-2h_1}\Big)\cdot x- {h_2\cdot (x^\intercal x)}.
\end{gather*}
Since $W_1,W_2,W',Z_1,Z_2,Z'$ all consist of i.i.d.\ standard Gaussian entries, we can conclude that the two Hamiltonians have the same distribution, which implies \eqref{eq:id_cd_exp}.
\end{proof}

\subsection{Convexity}

In this subsection, we show the following.

\begin{lemma}\label{l.F_N.conv}
For each $N\in\N$, the function $\bar F_N:[0,\infty)\times \S^K_+\to \R$ is convex.
\end{lemma}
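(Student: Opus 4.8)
The plan is to reduce convexity to a one-dimensional statement and then to a single differentiation. Since $[0,\infty)\times\S^K_+$ is convex, it suffices to show that for any $\beta^0=(t^0,h^0)$ and $\beta^1=(t^1,h^1)$ in it the map $\phi(\lambda):=\bar F_N\big((1-\lambda)\beta^0+\lambda\beta^1\big)$ is convex on $[0,1]$; by continuity of $\bar F_N$ (Lemma~\ref{l.lip_unif_N}) one may further assume $\beta^0,\beta^1$ lie in the interior $(0,\infty)\times\S^K_{++}$ and conclude by approximation. On the interior the integrand defining $F_N$ depends smoothly on the parameters and $P^X_N$ has bounded support by~\eqref{e.supp}, so $\bar F_N$ is smooth there and it is enough to show $\phi''(\lambda)\ge 0$. (Up to an affine term this is the concavity of the mutual information between $X$ and $(Y,\bar Y)$ in the signal-to-noise parameters $(t,h)$, but I would argue it directly.)

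Write $v:=\beta^1-\beta^0=(v_t,v_h)\in\R\times\S^K$, noting that $v_h$ need not be positive semi-definite. Differentiating once and using \eqref{e.dt_F_N}--\eqref{e.grad_F_N},
\[
\phi'(\lambda)=\E\big\langle \mathsf R(x,x')\big\rangle,\qquad
\mathsf R(x,x'):=\frac{v_t}{N^{\mathsf{p}}}(x^{\otimes \mathsf{p}}A)\cdot(x'^{\otimes \mathsf{p}}A)+\frac1N\,v_h\cdot(x^\intercal x'),
\]
where $\langle\cdot\rangle$ is the Gibbs measure at $(1-\lambda)\beta^0+\lambda\beta^1$ and $x,x'$ are two replicas. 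To differentiate again I would use the representation of the Hamiltonian in which the Gaussian noises $W,Z$ are held fixed, so that the $\lambda$-dependence is explicit; then $\partial_\lambda\langle\cdot\rangle$ produces a factor $\partial_\lambda H_N$, consisting of terms linear in $W,Z$ and deterministic terms. Gaussian integration by parts handles the noise contributions and the Nishimori identity replaces every occurrence of the planted $X$ by an independent replica. Collecting the terms — the computation-heavy step — yields an identity of the form
\[
\phi''(\lambda)=c_N\,\E\Big[\big\langle \mathsf R_{12}^2\big\rangle-2\big\langle \mathsf R_{12}\mathsf R_{13}\big\rangle+\big\langle \mathsf R_{12}\big\rangle^2\Big]
\]
for a positive constant $c_N$, where $\mathsf R_{ab}:=\mathsf R(x^a,x^b)$ for independent replicas $x^1,x^2,x^3$ (so $\langle \mathsf R_{12}\rangle^2=\langle \mathsf R_{12}\mathsf R_{34}\rangle$). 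As a sanity check, when $A=0$ and $K=1$ this reduces to $\phi''(\lambda)=\tfrac2N\,\E\,\big|\langle xx^\intercal\rangle-\langle x\rangle\langle x\rangle^\intercal\big|^2$, the squared norm of the Gibbs covariance of $x$.

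It remains to see that the bracket is nonnegative for an arbitrary direction $v$. Being a symmetric bilinear form in finitely many real polynomials of $x$ (resp.\ of $x'$), $\mathsf R$ diagonalizes: $\mathsf R(x,x')=\sum_k\mu_k\,\psi_k(x)\,\psi_k(x')$ with real polynomials $\psi_k$ and real eigenvalues $\mu_k$, of both signs in general (the negative ones arising from negative eigenvalues of $v_t$ or of $v_h$). Substituting and using that replicas are independent under $\langle\cdot\rangle$,
\[
\big\langle \mathsf R_{12}^2\big\rangle-2\big\langle \mathsf R_{12}\mathsf R_{13}\big\rangle+\big\langle \mathsf R_{12}\big\rangle^2
=\sum_{k,l}\mu_k\mu_l\big(\langle\psi_k\psi_l\rangle-\langle\psi_k\rangle\langle\psi_l\rangle\big)^2=\mu^\intercal\,C^{\circ 2}\,\mu,
\]
where $C:=\big(\langle\psi_k\psi_l\rangle-\langle\psi_k\rangle\langle\psi_l\rangle\big)_{k,l}$ is the Gibbs covariance matrix of the family $(\psi_k)_k$ — in particular positive semi-definite — and $C^{\circ 2}$ is its entrywise square, which is positive semi-definite by the Schur product theorem. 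Hence $\mu^\intercal C^{\circ 2}\mu\ge 0$, so $\phi''(\lambda)\ge 0$, $\phi$ is convex, and letting $\beta^0,\beta^1$ vary and using continuity, $\bar F_N$ is convex.

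The main obstacle is the bookkeeping in the second-derivative computation: tracking every term produced by Gaussian integration by parts and the Nishimori identity and checking that they telescope exactly into the displayed four-replica functional with no residual term. The conceptual content is short: unlike the nonlinearity $\H$, which we allow to be nonconvex, the Bayes-optimal (Nishimori) structure forces $\phi''$ to be a weighted sum of squared covariances, and the Schur product theorem is precisely what absorbs the indefinite direction $v$.
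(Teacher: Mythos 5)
Your argument is correct, and it shares the paper's key intermediate step: both proofs arrive at the same four-replica expression for the second directional derivative. The paper computes
\begin{equation*}
\big(s\partial_t + a\cdot\nabla\big)^2\bar F_N=\frac{2}{N}\E\big\langle (b\cdot z^\intercal z')^2 -2(b\cdot z^\intercal z')(b\cdot z^\intercal z'') +(b\cdot z^\intercal z')(b\cdot z''^\intercal z''')\big\rangle,
\end{equation*}
with $b\cdot z^\intercal z' = s\,y\cdot y' + a\cdot x^\intercal x'$, which is exactly a constant multiple of your $\mathsf R_{12}$; your $\E\big[\la\mathsf R_{12}^2\ra-2\la\mathsf R_{12}\mathsf R_{13}\ra+\la\mathsf R_{12}\ra^2\big]$ is the same object, and you are right that this computation (Nishimori plus Gaussian integration by parts) is the bookkeeping-heavy part — the paper itself defers it to \cite[(3.27)]{mourrat2019hamilton}. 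Where you genuinely diverge is in the final nonnegativity argument. The paper centers the replica variable, $\bar z := z - \la z\ra$, and after symmetrizing between replicas rewrites the whole expression as $\frac{2}{N}\E\la(b\cdot\bar z^\intercal\bar z')^2\ra$, a manifest perfect square, so no external theorem is needed. You instead diagonalize the indefinite bilinear form $\mathsf R$ (i.e., the direction $b$, acting on the overlap coordinates $(y,x)$) and reduce the four-replica functional to $\mu^\intercal C^{\circ 2}\mu$ with $C$ the Gibbs covariance of the eigenfunctions $(\psi_k)$, then apply the Schur product theorem. Both are valid; the paper's route is slightly more elementary and self-contained, while yours makes the mechanism more transparent — it isolates exactly how the positive semi-definiteness of a Bayes-optimal Gibbs covariance, combined with Hadamard squaring, soaks up the negative eigenvalues of the direction $v$, which is a useful way to see why convexity of $\bar F_N$ holds even when $\H$ is not convex.
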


\begin{proof}

We want to show that for every $(s,a)\in \R\times \S^K$ and every $(t,h)\in [0,\infty)\times \S^K_+$,
\begin{align*}
    \big(s\partial_t + a\cdot\nabla\big)^2\bar F_N(t,h)\geq 0.
\end{align*}
For brevity, we set $y =\sqrt{\frac{2}{N^{\mathsf{p}-1}}} x^{\otimes \mathsf{p}}A$ and similarly for replicas of $x$. We can compute that
\begin{gather*}
    s^2 \partial_t^2 \bar F_N(t,h) = \frac{2s^2}{N} \E \la ( y \cdot y')( y\cdot y ' - 2 y\cdot y'' + y''\cdot y''')\ra,\\
    s \partial_t \big(a\cdot\nabla\bar F_N(t,h)\big) = \frac{2s}{N} \E \la (a\cdot x^\intercal x' )( y\cdot y ' - 2 y\cdot y'' + y''\cdot y''')\ra,\\
    (a\cdot\nabla)^2\bar F_N(t,h) = \frac{2}{N} \E \la (a\cdot x^\intercal x' )(a\cdot x^\intercal x'  - 2a\cdot x^\intercal x''  +a\cdot x''^\intercal x''' )\ra.
\end{gather*}
Again, this computation uses the Nishimori identity and the Gaussian integration {by} parts. Details for deriving the third identity above can be seen in the derivation of \cite[(3.27)]{mourrat2019hamilton}. The two other identities can be computed by following the same procedure.
Let $I$ be the identity matrix of the same size as $y^\intercal y'$. Setting $b = \mathrm{diag}(a,sI)$, $z=\mathrm{diag}(x,y)$ and similarly for replicas, we have $b\cdot z^\intercal z' = sy\cdot y'+a\cdot x^\intercal x'$ (where the matrix product is carried out prior to the dot product). In this notation, adding the above identities together and using the symmetry between replicas, we have
\begin{align*}
    &\big(s\partial_t + a\cdot\nabla\big)^2\bar F_N(t,h)= \frac{2}{N}\E \la (b\cdot z^\intercal z')^2 -2(b\cdot z^\intercal z')(b\cdot z^\intercal z'') +(b\cdot z^\intercal z')(b\cdot z''^\intercal z''')\ra\\
    &= \frac{2}{N}\E\la (b\otimes b)\cdot\Big(z^\intercal z'\otimes z^\intercal z' - 2z^\intercal \langle z'\rangle \otimes  z^\intercal \langle z'\rangle + \langle z\rangle^\intercal \langle z'\rangle \otimes \langle z\rangle^\intercal \langle z'\rangle\Big) \ra.
\end{align*}
Writing $\bar z = z -\langle z \rangle$ and similarly for replicas, we obtain that the above is equal to 
\begin{align*}
    \frac{2}{N}\E\la (b\otimes b)\cdot\Big(z^\intercal \bar z'\otimes z^\intercal \bar z' - \bar z^\intercal \langle z'\rangle \otimes \bar z^\intercal \langle z'\rangle\Big) \ra.
\end{align*}
Since $b$ is symmetric, we can see that
\begin{align*}
    (b\otimes b) \cdot (\bar z^\intercal \langle z'\rangle \otimes \bar z^\intercal \langle z'\rangle) = (b\otimes b) \cdot (\langle z'\rangle^\intercal \bar z \otimes  \langle z'\rangle^\intercal \bar z).
\end{align*}
Using the symmetry between replicas, we conclude from the above three displays that
\begin{equation*}
    \big(s\partial_t + a\cdot\nabla\big)^2\bar F_N(t,h)=\frac{2}{N}\E\la (b\otimes b)\cdot\Big(\bar z^\intercal \bar z'\otimes \bar z^\intercal \bar z' \Big) \ra\geq 0. \qedhere
\end{equation*}
\end{proof}

\section{Some results of convex analysis}\label{s.convex}

As mentioned above, our approach to proving Theorem~\ref{main_thm} relies on the identification of the limit of $\bar F_N$ as the unique viscosity solution to \eqref{e.HJ_tensor}. The uniqueness criterion we will use for this purpose is inspired by that described in Appendix~\ref{a.vis}. Compared with the setting explored there, equation \eqref{e.HJ_tensor} poses additional difficulties that are caused by the fact that the domain $\S^K_+$ of the ``space'' variable has a boundary. This is compounded by the fact that the relevant order on $\S^K_+$ is not total. The main purpose of this section is to demonstrate Proposition~\ref{p.order}, which states that, despite this, the subgradient of a nondecreasing convex function with nondecreasing gradients always has a maximal element (and this maximal element has further good properties). This proposition will be particularly handy in Section~\ref{s.hj}. 

\subsection{Preliminaries}

We start by recalling basic definitions and results from convex analysis.
Since we need results for both functions defined on $\S^K_+$ and functions on $[0,\infty)\times \S^K_+$, we consider a slightly more general setting in this subsection and specialize into these two spaces when needed.

Let $\mathscr H$ be a finite-dimensional Hilbert space. The associated inner product is denoted by a dot product, and the norm by $|\,\cdot\,|$. Since $\mathscr H$ can be isometrically identified with a Euclidean space, the usual notion of differentiability for any function $u:\mathscr H \to \R$ still makes sense. If $u$ is differentiable at $x\in \mathscr H$, we denote by $Du(x)$ its differential at $x$. We also identify $\mathscr H$ with its dual and thus $Du(x)\in \mathscr H$. For the purpose of this work, the space $\mathscr H$ will be taken to be either $\R\times \S^K$ or $\S^K$, and, correspondingly, $D$ will be taken to be either $(\partial_t,\nabla )$ or $\nabla$.

Let $u:\mathscr H\to \R\cup\{\infty\}$ be a convex function. We define its subdifferential at $x\in\mathscr H$ by
\begin{align}\label{e.def.subdiff}
    \partial u(x) := \Big\{y\in\mathscr H:\ u(x')\geq u(x)+ y\cdot(x'-x),\  \forall x'\in\mathscr H\Big\}.
\end{align}
The effective domain of $u$ is
\begin{equation*}
    \mathsf{dom}\, u := \{x \in \mathscr H:u(x)<\infty\}.
\end{equation*}
The function $u$ is called \textit{proper} if $\mathsf{dom}\, u\neq \emptyset$. The outer normal cone to a subset $\mathscr S \subset \mathscr H$ at $x\in\mathscr H$ is given by
\begin{align}\label{e.def_outer_normal}
    \mathbf{n}_{\mathscr S}(x) := \big\{y\in\mathscr H:\  y\cdot(x'-x)\leq 0,\ \forall x'\in \mathscr S\big\}.
\end{align}

The following result characterizes the subdifferential as the sum of the outer normal cone and the set of accumulation points of differentials at nearby differentiable points; we refer to  \cite[Theorem 25.6]{rockafellar1970convex} for a proof.

\begin{lemma}\label{l.char_subd}
Let $u:\mathscr H\to \R\cup\{\infty\}$ be a proper lower semi-continuous convex function such that $\mathsf{dom}\, u$ has nonempty interior. Then, for every $x \in \mathscr H$,
\begin{align*}
    \partial u(x) = \mathsf{cl}(\mathsf{conv}\, S) + \mathbf{n}_{\mathsf{dom}\, u}(x).
\end{align*}
where $S$ is the set of all limits of sequences of the form $\big(Du(x_i)\big)_{i\in\N}$ such that $u$ is differentiable at $x_i$ and $\lim_{i\to\infty}x_i = x$.
\end{lemma}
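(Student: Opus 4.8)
The plan is to prove the two inclusions separately; the inclusion ``$\supseteq$'' is soft, while ``$\subseteq$'' contains the analytic content. I will assume $x\in\mathsf{dom}\,u$ and $\partial u(x)\neq\emptyset$, since otherwise lower semi-continuity forces the right-hand side to be empty as well and the identity is trivial. Let $\mathscr D$ denote the set of differentiability points of $u$; by local Lipschitzness on $\mathrm{int}(\mathsf{dom}\,u)$ together with Rademacher's theorem, $\mathscr D$ has full Lebesgue measure in and is dense in $\mathrm{int}(\mathsf{dom}\,u)$, and in fact $\mathscr D\subseteq\mathrm{int}(\mathsf{dom}\,u)$ because a boundary point of $\mathsf{dom}\,u$ carries a nonzero outer normal, hence a non-singleton subdifferential.

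For ``$\supseteq$'': if $u$ is differentiable at $x_i\to x$ with $Du(x_i)\to y$, then passing to the limit in $u(x')\ge u(x_i)+Du(x_i)\cdot(x'-x_i)$ only requires $u(x_i)\to u(x)$, which holds when the $x_i$ are taken on a segment $[x,x_\ast]$ with $x_\ast\in\mathrm{int}(\mathsf{dom}\,u)$, since a finite convex lower semi-continuous function of one real variable on $[0,1]$ is continuous there; a general approximating sequence can be replaced by one on such a segment without changing the limiting gradient, using local Lipschitzness. Hence $S\subseteq\partial u(x)$, and since $\partial u(x)$ is closed and convex, $\mathsf{cl}(\mathsf{conv}\,S)\subseteq\partial u(x)$. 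Moreover $\partial u(x)$ absorbs $\mathbf n_{\mathsf{dom}\,u}(x)$: for $y_0\in\partial u(x)$, $n\in\mathbf n_{\mathsf{dom}\,u}(x)$, and $x'\in\mathsf{dom}\,u$ one has $u(x')\ge u(x)+y_0\cdot(x'-x)\ge u(x)+(y_0+n)\cdot(x'-x)$ because $n\cdot(x'-x)\le 0$, and the inequality is trivial for $x'\notin\mathsf{dom}\,u$; thus $y_0+n\in\partial u(x)$. Combining these two facts yields ``$\supseteq$''.

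For ``$\subseteq$'': since both sides are closed convex sets it is enough to compare support functions, i.e.\ to prove $\sigma_{\partial u(x)}(v)\le\sigma_{\mathsf{cl}(\mathsf{conv}\,S)}(v)+\sigma_{\mathbf n_{\mathsf{dom}\,u}(x)}(v)$ for all $v\in\mathscr H$. Here $\sigma_{\partial u(x)}$ is the closure of the sublinear directional derivative $v\mapsto u'(x;v)=\lim_{t\downarrow 0}\frac{u(x+tv)-u(x)}{t}$, whose effective domain is the cone $\mathscr C=\{v:x+tv\in\mathsf{dom}\,u\text{ for some }t>0\}$ of feasible directions; $\sigma_{\mathbf n_{\mathsf{dom}\,u}(x)}$ is the $\{0,+\infty\}$-indicator of $\mathsf{cl}\,\mathscr C$; and $\sigma_{\mathsf{cl}(\mathsf{conv}\,S)}=\sigma_S$. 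The inequality is therefore automatic off $\mathsf{cl}\,\mathscr C$, and since all functions involved are sublinear one may reduce to the relative interior of the common domain, so it suffices to establish the key step $u'(x;v)\le\sigma_S(v)$ for $v\in\mathrm{int}\,\mathscr C$ (the reverse $u'(x;v)=\sigma_{\partial u(x)}(v)\ge\sigma_S(v)$ being free from $S\subseteq\partial u(x)$). For such $v$, the line-segment principle gives $x+tv\in\mathrm{int}(\mathsf{dom}\,u)$ for all small $t>0$, so $t\mapsto u(x+tv)$ is finite and convex near $0$; hence $u'(x;v)=\lim_{t\downarrow 0}Du(x+tv)\cdot v$ along the a.e.\ set of $t$ with $x+tv\in\mathscr D$ (after an arbitrarily small perturbation of $v$ to a good direction), and extracting a subsequence along which the full gradients $Du(x+t_iv)$ converge, to some $w$, gives $w\in S$ and $u'(x;v)=w\cdot v\le\sigma_S(v)$.

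The \emph{main obstacle} is the very last extraction: one must know the gradients $Du(x+t_iv)$ stay bounded as $t_i\downarrow0$, even though at a boundary point of $\mathsf{dom}\,u$ the set $\partial u(x)$ is typically unbounded. The component of $Du(x+t_iv)$ along $v$ converges, and components along the other directions of $\mathscr C$ are bounded above by difference quotients of $u$ near $x$; controlling the remaining directions, together with the attendant bookkeeping (closedness of $\mathsf{cl}(\mathsf{conv}\,S)+\mathbf n_{\mathsf{dom}\,u}(x)$, extension of the support-function inequality to the boundary of $\mathsf{cl}\,\mathscr C$), is where the hypothesis that $\mathsf{dom}\,u$ has nonempty interior is genuinely used and where the argument becomes delicate. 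A convenient way to package it is to note that $\mathsf{rec}(\partial u(x))=\mathbf n_{\mathsf{dom}\,u}(x)$ and that every extreme point of $\partial u(x)$ lies in $S$. This is exactly the content of \cite[Theorem~25.6]{rockafellar1970convex} cited above, which we invoke rather than reprove.
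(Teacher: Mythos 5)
The paper's ``proof'' of this lemma is a single citation to \cite[Theorem~25.6]{rockafellar1970convex}, and your proposal ultimately invokes that same theorem to close the gap you identify, so in substance you take the same route as the paper; your added exposition is context rather than an independent argument, and you are upfront about this. Two remarks on the exposition. In the $\supseteq$ direction, the segment-replacement step (claiming a general approximating sequence can be traded for one on a fixed segment ``without changing the limiting gradient'') is both dubious --- the limiting gradient is direction-sensitive, and local Lipschitzness does not hold in a neighborhood of a boundary point of $\mathsf{dom}\,u$ --- and unnecessary: if $x_i\to x$, $u$ is differentiable at each $x_i$, and $Du(x_i)\to y$, then for any $x'$ one passes to $\liminf$ in $u(x')\ge u(x_i)+Du(x_i)\cdot(x'-x_i)$, using $\liminf_i u(x_i)\ge u(x)$ from lower semi-continuity and $Du(x_i)\cdot(x'-x_i)\to y\cdot(x'-x)$ from boundedness of the gradients, to conclude $y\in\partial u(x)$ directly with no continuity of $u$ at $x$ required. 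In the $\subseteq$ direction you correctly identify the crux (bounding $Du(x+t_iv)$ as $t_i\downarrow 0$, packaged via the recession cone and extreme-point structure of $\partial u(x)$) and are candid that you close it by citing the very theorem being proved; since that is also the paper's position, the proposal documents why the citation is legitimate rather than supplying a self-contained replacement for it.
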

{Note that when $x$ is in the interior of $\mathsf{dom}\,u$, we have $\mathbf{n}_{\mathsf{dom}\, u}(x)=\{0\}$.}



We also record two classical results which, while not relevant to the proof of Proposition~\ref{p.order}, will be useful later on. The first one characterizes the subdifferential of the sum of two convex functions, assuming that one of them is differentiable for simplicity. The second one states a correspondence between elements of the subdifferential at a point and smooth functions that ``touch the convex function from below''.    

\begin{lemma}\label{l.convex+diff}
Let $u:\mathscr H\to \R\cup\{\infty\}$ be a proper lower semi-continuous convex function such that $\mathsf{dom}\, u$ has nonempty interior. Let $v:\mathscr H\to \R$ be convex and differentiable everywhere. Set $u'=u+v$. Then, $\mathsf{dom}\, u=\mathsf{dom}\, u'$ and, for every $x\in \mathsf{dom}\, u$, it holds that
\begin{align*}
    \partial u'(x) = \partial u(x) +\big\{Dv(x)\big\}.
\end{align*}
\begin{proof}
The first claim is obvious due to the finiteness of $v$. To see the second claim, we start by noting that due to $\mathsf{dom}\, u=\mathsf{dom}\, u'$, the outer normal cone to $\mathsf{dom}\, u$ is the same as the outer normal cone to $\mathsf{dom}\, u'$ at every point. The differentiability of $v$ implies that $u'$ is differentiable at some point $x'$ if and only if $u$ is also differentiable at $x'$. Hence, the second claim follows from Lemma~\ref{l.char_subd}.
\end{proof}
\end{lemma}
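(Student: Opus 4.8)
The plan is to treat the two assertions separately, the equality of effective domains being immediate and the subdifferential identity being the real content. Since $v$ is finite everywhere, $u'(x)=u(x)+v(x)$ is finite precisely when $u(x)$ is, so $\mathsf{dom}\,u=\mathsf{dom}\,u'$; in particular both sets have the same (nonempty) interior, and the outer normal cones $\mathbf{n}_{\mathsf{dom}\,u}(x)$ and $\mathbf{n}_{\mathsf{dom}\,u'}(x)$ coincide at every $x$. I will also use that a finite convex function on a finite-dimensional space is automatically $C^1$ on the whole space, so that $Dv$ is continuous, and that $u'$ is proper, lower semi-continuous and convex, being the sum of such a function and a continuous convex one; hence Lemma~\ref{l.char_subd} is available for $u'$ exactly as for $u$.

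For the inclusion $\partial u(x)+\{Dv(x)\}\subseteq\partial u'(x)$ I would argue directly from the definition \eqref{e.def.subdiff}: given $y\in\partial u(x)$, for every $x'\in\mathscr H$ we have $u(x')\geq u(x)+y\cdot(x'-x)$, while convexity and differentiability of $v$ give the supporting inequality $v(x')\geq v(x)+Dv(x)\cdot(x'-x)$; summing the two shows $y+Dv(x)\in\partial u'(x)$. For the reverse inclusion I would invoke Lemma~\ref{l.char_subd} for both functions. The key point is that, since $v$ is differentiable everywhere, $u'$ is differentiable at a point $x_i$ if and only if $u$ is, with $Du'(x_i)=Du(x_i)+Dv(x_i)$; combined with the continuity of $Dv$, this shows that along any sequence $x_i\to x$ on which $u$ (equivalently $u'$) is differentiable, $Du'(x_i)$ converges iff $Du(x_i)$ does, and the limits differ by exactly $Dv(x)$. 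Hence the set $S'$ attached to $u'$ in Lemma~\ref{l.char_subd} equals $S+\{Dv(x)\}$, so $\mathsf{cl}(\mathsf{conv}\,S')=\mathsf{cl}(\mathsf{conv}\,S)+\{Dv(x)\}$, and adding the common normal cone yields $\partial u'(x)=\partial u(x)+\{Dv(x)\}$.

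The only step that is not pure bookkeeping is this transfer through Lemma~\ref{l.char_subd}, and the single ingredient that makes it go through is the continuity of $Dv$ (which is why it matters that $v$ is convex and finite, not merely differentiable): this is what allows the translation by $Dv(x)$ to pass through the closure and the convex hull. Everything else follows from unwinding the definitions of the subdifferential and of the outer normal cone.
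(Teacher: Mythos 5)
Your proof is correct and follows essentially the same route as the paper's: both reduce to Lemma~\ref{l.char_subd} by observing that $u$ and $u'$ have the same effective domain (hence the same outer normal cones) and the same set of differentiability points, and that the gradients differ by $Dv$. You make explicit one ingredient the paper leaves implicit, namely the continuity of $Dv$, which is indeed what lets the translation by $Dv(x)$ pass through the closure and convex hull in Lemma~\ref{l.char_subd}. One small slip in phrasing: it is not true that every finite convex function on a finite-dimensional space is $C^1$ (consider $x\mapsto|x|$); the correct statement, which is what you actually use, is that a convex function that is differentiable on an open set has a continuous gradient there, so the hypothesis that $v$ is differentiable everywhere already yields that $Dv$ is continuous.
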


\begin{lemma}\label{l.subsol_subd}
Let $u:\mathscr H\to \R\cup\{\infty\}$ be convex. Then, $p \in \partial u(x)$ for some $x$ if and only if there exists a smooth function $\phi:\mathscr H\to \R$ such that $u-\phi$ achieves its minimum at $x$ and $D\phi(x) = p$.
\end{lemma}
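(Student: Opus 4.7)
The plan is to prove both implications directly from the definitions.

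For the forward direction, if $p \in \partial u(x)$, I would simply take $\phi$ to be the affine function $\phi(x') := u(x) + p \cdot (x' - x)$. This is smooth, satisfies $D\phi(x) = p$ by construction, and the defining inequality of the subdifferential \eqref{e.def.subdiff} rewrites exactly as $u(x') - \phi(x') \geq 0 = u(x) - \phi(x)$ for every $x' \in \mathscr H$, so $u - \phi$ attains its minimum at $x$.

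For the reverse direction, suppose $\phi$ is smooth with $D\phi(x) = p$ and $u - \phi$ has its minimum at $x$. Rearranging yields $u(x') \geq u(x) + \phi(x') - \phi(x)$ for every $x' \in \mathscr H$. Using the first-order Taylor expansion of $\phi$ at $x$, this gives
\begin{equation*}
    u(x') \geq u(x) + p \cdot (x' - x) + o(|x' - x|)
\end{equation*}
as $x' \to x$, which is only an infinitesimal version of the subgradient inequality. To upgrade this to the global inequality, I would fix an arbitrary $x' \in \mathscr H$ (where we only need to handle the case $u(x')<\infty$, the other being trivial) and apply the local estimate to $x_\lambda := x + \lambda(x' - x)$ for small $\lambda \in (0,1)$. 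Then I would use convexity of $u$ to bound $u(x_\lambda) \leq (1-\lambda)u(x) + \lambda u(x')$, combine it with the local estimate at $x_\lambda$, divide by $\lambda$, and let $\lambda \to 0^+$ to extract $u(x') \geq u(x) + p \cdot (x'-x)$, i.e.\ $p \in \partial u(x)$.

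The argument is essentially formal once the right test function $\phi$ is identified in each direction; I do not anticipate a genuine obstacle. The only minor point to be careful about is the global-from-local step in the reverse direction, where the convex combination trick is what bridges the smooth Taylor estimate at $x$ with the subgradient inequality at arbitrary points of $\mathscr H$.
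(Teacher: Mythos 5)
Your proof is correct and follows essentially the same route as the paper: in the forward direction an affine test function, and in the reverse direction the convex-combination trick combined with the differentiability of $\phi$ at $x$ and the minimality of $u-\phi$ there, then sending $\lambda \to 0^+$. The paper merely organizes the reverse step in the opposite order (convexity inequality first, then minimality and Taylor expansion of $\phi$), but the computation is identical.
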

\begin{proof}
Assuming $p\in\partial u(x)$, we can deduce from the definition of subdifferential that $u-\phi$ achieves {its} minimum at $x$ for $\phi:y\mapsto p\cdot y$. Now, let us assume the converse. The convexity of $u$ implies that
\begin{align*}
    u(x') -u(x) \geq \frac{1}{\lambda}\Big(u\big(x+\lambda(x'-x)\big)-u(x)\Big),\quad\forall x',\ \forall \lambda \in(0,1].
\end{align*}
Using the minimality of $u-\phi$ at $x$ and the differentiability of $\phi$ at $x$, we can obtain $D\phi(x)\in\partial u(x)$ by sending $\lambda \to 0$.
\end{proof}

To apply these results to the study of solutions to \eqref{e.HJ_tensor}, we make the following remark.

\begin{remark}\label{r.extension}
Any convex function $f:[0,\infty)\times\S^K_+\to\R$ can be extended in a standard way to a convex function $\bar f:\R\times \S^K\to \R\cup\{\infty\}$ by setting $\bar f = f$ on $[0,\infty)\times\S^K_+$ and $\bar f = \infty$ elsewhere. Note that $\bar f$ is proper and its effective domain is $[0,\infty)\times\S^K_+$ which has nonempty interior. If $f$ is continuous, then $\bar f$ is lower semi-continuous. In the following, we do not distinguish between $f$ and its standard extension. Then, the notions and results discussed above can be applied to $f$ by setting $\mathscr H= \R\times \S^K$ and $D = (\partial_t,\nabla)$. Similar treatments can be taken for any convex function $\psi: \S^K_+\to \R$.
\end{remark}

Finally, since we will work with functions defined on $\S^K_+$ and $[0,\infty)\times \S^K_+$, we record these two simple lemmas.

\begin{lemma}\label{l.psd}
For every $a \in \S^K$, we have $a\in \S^K_+$ if and only if $a\cdot b \geq 0$ for all $b\in\S^K_+$.
\end{lemma}

\begin{lemma}\label{l.outer_normal}
For every $t\geq 0$ and $x \in \S^K_+$, we have $\mathbf{n}_{\S^K_+}(x) \subset - \S^K_+$ and $\mathbf{n}_{[0,\infty)\times\S^K_+}(t,x) \subset -([0,\infty)\times \S^K_+)$.
\end{lemma}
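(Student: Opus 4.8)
The two claims of Lemma~\ref{l.outer_normal} are essentially the same statement, applied to $\S^K_+ \subset \S^K$ and to $[0,\infty)\times\S^K_+ \subset \R\times\S^K$; I would reduce the second to the first by a product argument, so the bulk of the work is to show $\mathbf{n}_{\S^K_+}(x) \subset -\S^K_+$ for every $x\in\S^K_+$.

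The plan is as follows. Fix $x\in\S^K_+$ and let $y\in\mathbf{n}_{\S^K_+}(x)$, i.e.\ $y\cdot(x'-x)\leq 0$ for all $x'\in\S^K_+$. First I would note that $\S^K_+$ is a cone, so both $2x$ and $0$ lie in $\S^K_+$; plugging $x' = 2x$ gives $y\cdot x \leq 0$, and plugging $x'=0$ gives $-y\cdot x\leq 0$, hence $y\cdot x = 0$. Consequently the defining inequality simplifies to $y\cdot x' \leq 0$ for all $x'\in\S^K_+$. By Lemma~\ref{l.psd}, a symmetric matrix $b$ satisfies $b\cdot x'\geq 0$ for all $x'\in\S^K_+$ precisely when $b\in\S^K_+$; applying this with $b=-y$ yields $-y\in\S^K_+$, i.e.\ $y\in-\S^K_+$, which is the first claim. (One could equivalently argue directly: for a rank-one $x' = vv^\intercal\in\S^K_+$ we have $y\cdot x' = v^\intercal y v$, so $y\cdot x'\leq 0$ for all such $x'$ forces $y$ to be negative semi-definite; but invoking Lemma~\ref{l.psd} is cleaner.)

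For the second claim, fix $t\geq 0$ and $x\in\S^K_+$ and let $(s,y)\in\mathbf{n}_{[0,\infty)\times\S^K_+}(t,x)$, so $s(t'-t) + y\cdot(x'-x)\leq 0$ for all $(t',x')\in[0,\infty)\times\S^K_+$. Taking $x'=x$ and letting $t'$ range over $[0,\infty)$ shows $s\in\mathbf{n}_{[0,\infty)}(t) \subset (-\infty,0]$ (again using that $[0,\infty)$ is a cone: $t'=2t$ gives $st\leq 0$, $t'=0$ gives $st\geq 0$, and then $t'\to\infty$ gives $s\leq 0$). Taking $t'=t$ and letting $x'$ range over $\S^K_+$ shows $y\in\mathbf{n}_{\S^K_+}(x)\subset-\S^K_+$ by the first claim. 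Hence $(s,y)\in(-\infty,0]\times(-\S^K_+) = -([0,\infty)\times\S^K_+)$.

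I do not anticipate a genuine obstacle here; the only point requiring a little care is to exploit the cone structure to conclude the orthogonality relations $y\cdot x = 0$ and $st=0$ before applying Lemma~\ref{l.psd}, rather than trying to handle the full affine inequality directly. Everything else is a direct unwinding of the definitions~\eqref{e.def_outer_normal}, \eqref{e.partial_oder_1} and an appeal to Lemma~\ref{l.psd}.
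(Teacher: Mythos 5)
Your proof is correct and follows the same route the paper intends: the paper simply remarks that Lemma~\ref{l.outer_normal} follows from Lemma~\ref{l.psd} together with the definition~\eqref{e.def_outer_normal}, and your argument is a careful unwinding of exactly that. The extra step of first establishing the orthogonality $y\cdot x = 0$ (and $st=0$) by testing $x'=0$ and $x'=2x$ is a clean way to reduce to the hypothesis of Lemma~\ref{l.psd}, and the product argument for the second inclusion is exactly as straightforward as you expect.
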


The first lemma is an application of the diagonalizability of real symmetric matrices (see e.g.\ \cite[Lemma 2.2]{mourrat2019hamilton}), and the second lemma is a consequence of {the first lemma} and the definition of outer normal cones in \eqref{e.def_outer_normal}.

\medskip

\subsection{Nondecreasing gradients}
The key result of this subsection is Proposition~\ref{p.order}. To state it, it is convenient to introduce the following definitions. Recall the partial orders defined in \eqref{e.partial_oder_1} and \eqref{e.partial_oder_2}.
\begin{definition}[Nondecreasingness]\label{d.nondecreasing}
A real-valued function $g$ defined on $\S^K_+$ or $[0,\infty)\times \S^K_+$ is said to be nondecreasing if $g(y_1)\leq g(y_2)$ whenever $y_1\leq y_2$.
\end{definition}
\begin{definition}[Nondecreasing gradients]
A Lipschitz function $f:[0,\infty)\times \S^K_+\to \R$ is said to have nondecreasing gradients if, for every $(t_1,x_1)$ and $(t_2,x_2)$ that are {points of differentiability} of $f$ and satisfy $(t_1,x_1)\leq (t_2,x_2)$, it holds that
\begin{align}\label{e.order}
    (\partial_t,\nabla)f(t_1,x_1)\leq (\partial_t,\nabla)f(t_2,x_2). 
\end{align}
\end{definition}

Recall that, by Rademacher's theorem, a Lipschitz function is differentiable almost everywhere. Here is the main result of this section.

\begin{proposition}\label{p.order}
Suppose that $f:[0,\infty)\times \S^K_+\to \R$ is nondecreasing, Lipschitz, convex, and has nondecreasing gradients. Then,
for every $(t,x)\in [0,\infty)\times \S^K_+ $, there exists $(b,q)\in\partial f(t,x)\cap [0,\infty)\times \S^K_+$ such that $|(b,q)|\leq \|f\|_{\mathrm{Lip}}$ and
\begin{align}\label{e.l.order}
    \text{for every }  (a,p)\in \partial f(t,x), \quad (a,p)\leq (b,q).
\end{align}
In addition, if $f$ satisfies \eqref{e.HJ_tensor} on a dense set, then $(b,q)$ can be chosen to satisfy $b-\H(q)= 0$.
\end{proposition}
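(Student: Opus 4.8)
The plan is to build the maximal element of $\partial f(t,x)$ by exploiting the interplay between convexity (Lemma~\ref{l.char_subd}), monotonicity of the gradients, and the structure of the outer normal cone (Lemma~\ref{l.outer_normal}). First I would use Lemma~\ref{l.char_subd} in the form $\partial f(t,x) = \mathsf{cl}(\mathsf{conv}\,S) + \mathbf{n}_{[0,\infty)\times\S^K_+}(t,x)$, where $S$ is the set of limits of gradients $(\partial_t,\nabla)f(t_i,x_i)$ at differentiability points $(t_i,x_i)\to(t,x)$. The key observation is that $S$ has a largest element: along any approximating sequence, one may instead approach $(t,x)$ from above, i.e.\ replace $(t_i,x_i)$ by a differentiability point $(t_i',x_i')\geq(t,x)$ with $(t_i',x_i')\to(t,x)$ (such points are dense since $[0,\infty)\times\S^K_+$ has nonempty interior and $f$ is differentiable a.e.); by the nondecreasing-gradients property, the corresponding gradients dominate $(\partial_t,\nabla)f(t_i,x_i)$ in the limit. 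Taking a convergent subsequence of gradients along a fixed sequence decreasing to $(t,x)$ produces a single vector $(b_0,q_0)\in S$ with $(a_0,p_0)\leq(b_0,q_0)$ for all $(a_0,p_0)\in S$; since the order is linear (preserved by convex combinations and limits), $(b_0,q_0)$ also dominates all of $\mathsf{cl}(\mathsf{conv}\,S)$.

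Next I would handle the normal cone contribution. By Lemma~\ref{l.outer_normal}, $\mathbf{n}_{[0,\infty)\times\S^K_+}(t,x)\subset -([0,\infty)\times\S^K_+)$, so every element of the normal cone is $\leq 0$; hence adding normal-cone vectors only decreases elements in the partial order, and $(b_0,q_0)$ (viewed as an element of $\partial f(t,x)$ via the decomposition with normal part $0$) still dominates every $(a,p)\in\partial f(t,x)$. This gives \eqref{e.l.order} with $(b,q):=(b_0,q_0)$. The nonnegativity $(b,q)\in[0,\infty)\times\S^K_+$ follows because $f$ is nondecreasing: every element of $S$ is a limit of gradients, and $(\partial_t,\nabla)f\geq 0$ at differentiability points (nondecreasingness plus the mean value theorem, exactly as in \eqref{e.bar_F_nondec}), so $S\subset[0,\infty)\times\S^K_+$ and the bound is preserved under convex combinations and limits. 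The norm bound $|(b,q)|\leq\|f\|_{\mathrm{Lip}}$ is immediate since each approximating gradient has norm at most the Lipschitz constant.

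For the final assertion, suppose $f$ satisfies \eqref{e.HJ_tensor} on a dense set $E$. The point is to choose the approximating sequence inside $E\cap\{$differentiability points$\}$ approaching $(t,x)$ from above; this intersection is still dense (a dense set intersected with a full-measure set, and one can further perturb upward). Along such a sequence $(t_i,x_i)$ we have $\partial_t f(t_i,x_i) = \H(\nabla f(t_i,x_i))$, and passing to the limit using continuity of $\H$ gives $b_0 = \H(q_0)$, i.e.\ $b-\H(q)=0$. I expect the main obstacle to be the careful verification that one can always approximate $(t,x)$ from \emph{above} — i.e.\ by differentiability points (and, for the last part, points of $E$) lying in the order-cone $(t,x)+[0,\infty)\times\S^K_+$ — rather than from arbitrary nearby points; this is where the nonempty interior of the domain, Rademacher's theorem, and a density argument must be combined, and where one must check that the nondecreasing-gradients hypothesis, stated only for differentiability points, actually transfers to the limiting subgradient in the needed direction.
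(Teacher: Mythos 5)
Your proposal follows essentially the same route as the paper — the decomposition of $\partial f(t,x)$ via Lemma~\ref{l.char_subd}, the observation that the normal-cone part only moves elements down in the partial order (Lemma~\ref{l.outer_normal}), the bound $|(b,q)|\le\|f\|_{\mathrm{Lip}}$ from Lipschitzness, nonnegativity from monotonicity of $f$, and the selection of the approximating sequence from the dense set where \eqref{e.HJ_tensor} holds for the last assertion. All of this is correct.

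However, the core maximality step has a genuine gap. You assert that, given an arbitrary sequence of differentiability points $(t_i,x_i)\to(t,x)$ with gradients converging to some $(a_0,p_0)\in S$, one can replace it by a sequence $(t_i',x_i')\geq(t,x)$ approaching from above and conclude ``by the nondecreasing-gradients property, the corresponding gradients dominate $(\partial_t,\nabla)f(t_i,x_i)$ in the limit.'' The nondecreasing-gradients hypothesis compares gradients at $(t_i,x_i)$ and $(t_j',x_j')$ only when $(t_i,x_i)\leq (t_j',x_j')$, and the fact that both sequences converge to $(t,x)$ does not by itself give this pointwise comparability — that is exactly what must be proved. The paper resolves this with a quantitative cone-geometry argument: it fixes a strictly interior direction $(s_0,y_0)\in(0,\infty)\times\S^K_{++}$, takes the ``from above'' points $(t_{0,j},x_{0,j})$ at scale $1/j$ along this direction (with error $j^{-2}$), and then uses Lemma~\ref{l.psd} and the strict positivity of $(s_0,y_0)$ to show that $(t_{0,j},x_{0,j})-(t,x)\geq \tfrac{1}{2j}(s_0,y_0)$ while $(t_i,x_i)-(t,x)\leq\tfrac{1}{2j}(s_0,y_0)$ for $i$ large; this yields the needed ordering $(t_i,x_i)\leq(t_{0,j},x_{0,j})$, after which one sends $i\to\infty$ and then $j\to\infty$. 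Your final paragraph gestures at an obstacle in roughly the right area (transferring nondecreasing gradients ``to the limiting subgradient in the needed direction''), but it does not pin down this interleaving/comparability argument, which is the actual technical heart of the proposition and cannot be waved away.
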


\begin{remark}
\label{r.ae.diff}
In the statement of Proposition~\ref{p.order}, the precise interpretation of the phrase that $f$ satisfies \eqref{e.HJ_tensor} on a dense set is that the set
\begin{equation*}  
\Ll\{ (t,x) \in (0,\infty)\times \S^K_{++} \ : \ \mbox{$f$ is differentiable at $(t,x)$ and }  \Ll( \dr_t f - \msf H(\nabla f) \Rr) (t,x) = 0 \Rr\} 
\end{equation*}
is dense in $[0,\infty) \times \S^K_+$. We point out that one could equivalently replace this condition by the condition that $f$ satisfies \eqref{e.HJ_tensor} at every point of differentiability in $(0,\infty) \times \S^K_{++}$. Indeed, one direction of this equivalence is immediate, since every Lipschitz function is differentiable almost everywhere. Conversely, if $(t,x) \in (0,\infty) \times \S^K_{++}$ is a point of differentiability of $f$, one can find a sequence of points $(t_n,x_n)$ that converge to $(t,x)$ and such that \eqref{e.HJ_tensor} is satisfied at $(t_n,x_n)$. Then every subsequential limit of $(\dr_t f, \nabla f)(t_n,x_n)$, say $(a,p) \in \R \times \S^K$, satisfies $a - \msf H(p) = 0$, and one can check that $(a,p) \in \dr f(t,x)$. But since $f$ is differentiable at $(t,x)$ and $(t,x)$ is in the interior (implying that the outer normal cone is \{0\}), the subdifferential $\dr f(t,x)$ is the singleton $\{(\dr_t , \nabla )f(t,x)\}$. 
\end{remark}

\begin{proof}[Proof of Proposition~\ref{p.order}]

Let $(t,x)\in [0,\infty)\times \S^K_+$. We start by fixing some $(s_0,y_0)\in (0,\infty)\times \S^K_{++}$ such that $|(s_0,y_0)|=1$. Note that
\begin{align*}
    (t,x) +\lambda (s_0,y_0)\in [0,\infty)\times\S^K_+,\quad\forall \lambda \geq 0.
\end{align*}
Since $f$ is differentiable a.e.\ on $[0,\infty)\times \S^K_+$, we can find a sequence $(t_{0,j},x_{0,j})_{j\in \N}$ of differentiable points such that
\begin{align}\label{e.t_0,j}
     \big|(t_{0,j},x_{0,j})-\big((t,x)+j^{-1}(s_0,y_0)\big)\big|\leq j^{-2},\quad\forall j\in\N.
\end{align}
If, in addition, $f$ satisfies \eqref{e.HJ_tensor} on a dense set, then clearly we can choose $(t_{0,j},x_{0,j})_{j\in\N}$ from that set. Since $f$ is Lipschitz, by passing to a subsequence, we may assume that $\lim_{j\to\infty}(\partial_t,\nabla)f(t_{0,j},x_{0,j})$ exists. Denote this limit by $(b,q)$. By Lemma~\ref{l.char_subd}, we know that $(b,q)\in\partial f(t,x)$. It is clear that $|(b,q)|\leq \|f\|_{\mathrm{Lip}}$. Since $f$ is nondecreasing, we also have that $(b,q)\in[0,\infty)\times \S^K_+$. Continuity of $\H$ implies that $b-\H(q)=0$ if $f$ satisfies \eqref{e.HJ_tensor} on a dense set. It remains to show \eqref{e.l.order}.

We apply Lemma~\ref{l.char_subd} to the standard extension of $f$ {(see Remark~\ref{r.extension})}. Note that $\mathsf{dom}\, f= [0,\infty)\times \S^K_+$. Let $S$ be the corresponding set at $(t,x)$ in this lemma.
Then, due to this and Lemma~\ref{l.outer_normal}, for each $(a,p) \in \partial f(t,x)$, there is $(a',p')\in \mathsf{cl}(\mathsf{conv}\, S)$ such that $(a,p)\leq (a',p')$. Therefore, it suffices to prove \eqref{e.l.order} for $(a,p)\in \mathsf{cl}(\mathsf{conv}\, S)$. In fact, since the condition on $(a,p)$ in~\eqref{e.l.order} is stable under convex combinations and passage to the limit, it suffices to show~\eqref{e.l.order} for every $(a,p) \in S$. 

Let $(a,p)\in S$. By definition of $S$, there exists a sequence $((t_i,x_i))_{i\in\N}$ converging to~$(t,x)$ such that
\begin{gather}
    \lim_{i\to\infty} (\partial_t,\nabla)f(t_i,x_i) = (a,p).\label{e.a,p}
\end{gather}
Due to our choice of $(s_0,y_0)$, we can see that for sufficiently large $j$ there is $i(j)\in\N $ such that
\begin{align}\label{e.t,x<}
    (t_i,x_i)\leq (t_{0,j},x_{0,j}),\quad\forall i\geq i(j).
\end{align}
Indeed, since $(s_0,y_0)$ is strictly positive, there is $C>0$ such that
\begin{align*}
    C^{-1}|(a',p')|\leq (a',p')\cdot (s_0,y_0)\leq C|(a',p')|,\quad\forall (a',p')\in[0,\infty)\times\S^K_+.
\end{align*}
By this and \eqref{e.t_0,j}, we have that, for every $\mathbf a\in[0,\infty)\times\S^K_+$,
\begin{align*}
    \mathbf a\cdot\bigg((t_{0,j},x_{0,j})-(t,x) - \frac{1}{2j}(s_0,y_0)\bigg)\geq \frac{1}{2j}\mathbf a\cdot(s_0,y_0) - j^{-2}|\mathbf a|\geq |\mathbf a|\bigg(\frac{1}{2Cj}-\frac{1}{j^2}\bigg).
\end{align*}
The right-hand side is nonnegative for sufficiently large $j$. Lemma~\ref{l.psd} thus implies that
\begin{align*}
    (t_{0,j},x_{0,j})-(t,x)\geq \frac{1}{2j}(s_0,y_0).
\end{align*}
On the other hand, similar arguments yield that, for sufficiently large $i$ (in terms of $j$), 
\begin{align*}
    (t_i,x_i)-(t,x) \leq \frac{1}{2j}(s_0,y_0).
\end{align*}
The two previous displays justify \eqref{e.t,x<}.
Using \eqref{e.a,p}, \eqref{e.t,x<} and the property \eqref{e.order}, by first sending $i\to\infty$ and then $j\to\infty$, we obtain that
\begin{equation*}
    (a,p)\leq \lim_{j\to\infty}(\partial_t,\nabla)f(t_{0,j},x_{0,j})=(b,q),
\end{equation*}
as desired.
\end{proof}

\section{Viscosity solutions}\label{s.hj}




In this section, we study the Hamilton-Jacobi equation \eqref{e.HJ_tensor}. First, we give the precise definition of viscosity solutions. Then, we recall the uniqueness and existence of viscosity solutions ensured by the comparison principle and the fact that the Hopf formula gives a viscosity solution. We next turn to the main goal of this section, which is to prove Proposition~\ref{p.weak-tensor}. This proposition provides us with a convenient sufficient condition for checking whether a function is the unique viscosity solution. This is instrumental in our proof of the convergence of the free energy in Section~\ref{s.proof}.


Recall that the notion of nondecreasing functions was introduced in Definition~\ref{d.nondecreasing}.
\begin{definition}[Viscosity solutions]\label{def:vs}
\leavevmode
\begin{enumerate}
    \item A nondecreasing Lipschitz function $f:[0,\infty)\times \S^K_+\to \R$ is a viscosity subsolution to \eqref{e.HJ_tensor} if for every $(t,h) \in (0,\infty)\times \S^K_+$ and every smooth $\phi:(0,\infty)\times \S^K_+ \to \R$ such that $f-\phi$ has a local maximum at $(t,h)$, we have
\begin{align*}
    \begin{cases}
    \big(\partial_t \phi - \H(\nabla\phi)\big)(t,h)\leq 0, \quad &\text{if }h\in \S^K_{++},\\
     \nabla\phi(t,h) \in\S^K_+,  \quad &\text{if }h\in \S^K_+\setminus\S^K_{++}.
    \end{cases}
\end{align*}

\item \label{item:vs_2} A nondecreasing Lipschitz function $f:[0,\infty)\times \S^K_+\to \R$ is a viscosity supersolution to \eqref{e.HJ_tensor} if for every $(t,h) \in (0,\infty)\times \S^K_+$ and every smooth $\phi:(0,\infty)\times \S^K_+ \to \R$ such that $f-\phi$ has a local minimum at $(t,h)$, we have
\begin{align*}
    \begin{cases}
    \big(\partial_t \phi - \H(\nabla\phi)\big)(t,h)\geq 0, \quad &\text{if }h\in \S^K_{++},\\
    \partial_t \phi (t,h) - \inf \H(q)\geq 0,\quad ,\quad &\text{if }h\in \S^K_+\setminus\S^K_{++},
    \end{cases}
\end{align*}
where the infimum is taken over all $q\in \big(\nabla \phi(t,h) +\S^K_+\big)\cap \S^K_+  $ and $|q|\leq \|f\|_{\mathrm{Lip}}$. 

\item A nondecreasing Lipschitz function $f:[0,\infty)\times \S^K_+\to \R$ is a viscosity solution to~\eqref{e.HJ_tensor} if $f$ is both a viscosity subsolution and supersolution.

\end{enumerate}
\end{definition}

Remarks~\ref{r.super} and \ref{r.boundary} below aim to provide a somewhat more intuitive understanding of Definition~\ref{def:vs}. Before doing so, we record the following observation.
\begin{lemma}\label{l.DH}
The function $\H:\S^K_+\to \R$ given in \eqref{def.H} is nondecreasing.
\end{lemma}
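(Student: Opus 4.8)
The plan is to show that $\H(q) = (AA^\intercal)\cdot q^{\otimes \mathsf{p}}$ is nondecreasing on $\S^K_+$ by reducing the statement to two facts: first, that $AA^\intercal \in \S^L_+$ (here $\S^L_+$ should be understood as a cone of symmetric matrices of appropriate size — with $q^{\otimes\mathsf p}$ viewed as a $K^{\mathsf p}\times K^{\mathsf p}$ matrix, one works with $\S^{K^{\mathsf p}}_+$, but in any case $AA^\intercal$ is a Gram matrix hence positive semi-definite), and second, that the tensor power map $q \mapsto q^{\otimes \mathsf{p}}$ is itself nondecreasing from $\S^K_+$ to $\S^{K^{\mathsf p}}_+$ in the sense that $q_1 \le q_2$ (i.e.\ $q_2 - q_1 \in \S^K_+$) implies $q_2^{\otimes \mathsf{p}} - q_1^{\otimes \mathsf{p}} \in \S^{K^{\mathsf p}}_+$. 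Granting these two facts, for $q_1 \le q_2$ in $\S^K_+$ we have $q_2^{\otimes\mathsf p} - q_1^{\otimes\mathsf p} \in \S^{K^{\mathsf p}}_+$ and $AA^\intercal \in \S^{K^{\mathsf p}}_+$ (extending $AA^\intercal$ suitably, or more precisely using that the dot product $(AA^\intercal)\cdot(\,\cdot\,)$ of a PSD matrix against a PSD matrix is nonnegative, which is Lemma~\ref{l.psd} or a direct trace computation $\tr(BC)\ge 0$ for $B,C\in\S_+$), so $\H(q_2) - \H(q_1) = (AA^\intercal)\cdot(q_2^{\otimes\mathsf p} - q_1^{\otimes\mathsf p}) \ge 0$.

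The heart of the matter is therefore the monotonicity of the tensor power. First I would record that the entrywise tensor (Kronecker) product of two positive semi-definite matrices is positive semi-definite: if $B, C \in \S_+$, then $B\otimes C \in \S_+$ (this is the standard fact that the Kronecker product of PSD matrices is PSD, provable via $B\otimes C = (B^{1/2}\otimes C^{1/2})(B^{1/2}\otimes C^{1/2})^\intercal$). In particular, any tensor power $r^{\otimes \mathsf p}$ of $r \in \S^K_+$ lies in $\S^{K^{\mathsf p}}_+$. Next I would use the telescoping identity
\begin{align*}
    q_2^{\otimes \mathsf p} - q_1^{\otimes \mathsf p} = \sum_{k=0}^{\mathsf p - 1} q_2^{\otimes k} \otimes (q_2 - q_1) \otimes q_1^{\otimes (\mathsf p - 1 - k)},
\end{align*}
valid because $\otimes$ is bilinear (and associative up to the fixed identification of index sets). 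Each summand is a tensor product of three matrices each lying in the appropriate PSD cone — $q_2^{\otimes k} \in \S_+$, $q_2 - q_1 \in \S^K_+$ by hypothesis, and $q_1^{\otimes(\mathsf p - 1 - k)} \in \S_+$ — hence each summand is PSD by iterating the Kronecker-product fact, and so is the sum. (When $\mathsf p = 1$ the sum is just $q_2 - q_1$ and the claim is trivial; when $\mathsf p = 0$ the map is constant; so one may assume $\mathsf p \ge 1$.)

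The main obstacle, and the only genuinely delicate point, is bookkeeping about how $q^{\otimes \mathsf p}$ is being viewed as a symmetric matrix and ensuring that the dot product $(AA^\intercal)\cdot q^{\otimes\mathsf p}$ really is a pairing of two elements of the same PSD cone so that Lemma~\ref{l.psd}-type positivity applies; one must be careful that the natural identification of $(K^{\mathsf p})\times(K^{\mathsf p})$ matrices used when forming $X^{\otimes\mathsf p}$ is compatible with symmetry and with the Kronecker structure, and that $AA^\intercal$, a priori an $L\times L$-flavored object via $A\in\R^{K^{\mathsf p}\times L}$, pairs correctly — in fact $(AA^\intercal)\cdot q^{\otimes\mathsf p} = \tr(A^\intercal q^{\otimes\mathsf p} A) = |A^\intercal (q^{\otimes\mathsf p})^{1/2}\cdots|$-type expression is manifestly $\ge 0$ when $q^{\otimes\mathsf p}\in\S_+$, which is the cleanest way to see the final nonnegativity. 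Everything else is routine linear algebra, so I would keep the write-up short: state the Kronecker-PSD lemma, give the telescoping identity, and conclude.
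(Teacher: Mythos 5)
Your proposal is correct and follows essentially the same route as the paper's proof: both rely on the facts that the Kronecker product of positive semi-definite matrices is positive semi-definite, that $q \mapsto q^{\otimes \mathsf{p}}$ is consequently monotone for the order on $\S^K_+$, and that the dot product of two PSD matrices is nonnegative (Lemma~\ref{l.psd}, applied with $AA^\intercal \in \S^{K^\mathsf{p}}_+$). The only cosmetic difference is that you spell out the telescoping sum $q_2^{\otimes \mathsf{p}} - q_1^{\otimes \mathsf{p}} = \sum_{k} q_2^{\otimes k} \otimes (q_2 - q_1) \otimes q_1^{\otimes(\mathsf{p}-1-k)}$, whereas the paper invokes an induction on $\mathsf{p}$ -- but the telescoping identity is exactly the unrolled form of that induction, so the two are the same argument.
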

\begin{proof}
Let $a,b \in \S^K_+$ be such that $a \le b$. Recalling that the tensor product of two positive semidefinite matrices is positive semidefinite, see for instance \cite[Theorem~7.20]{matrix}, one can show by induction on $\mathsf{p}$ that $a^{\otimes \mathsf{p}} \le b^{\otimes \mathsf{p}}$. Since $A A^\intercal \in \S^{K^\mathsf{p}}_+$, we can use Lemma~\ref{l.psd} to obtain that $\H(a) \le \H(b)$, as desired.
\end{proof}
\begin{remark}\label{r.super}
Given a nondecreasing Lipschitz function $f$, define the extension of $\H$ by
\begin{align}\label{e.def.bar_H}
    \bar\H(p) := \inf\Big\{\H(q):q\geq p,\, q\in\S^K_+,\, |q|\leq \|f\|_{\mathrm{Lip}}\Big\},\quad\forall p\in\S^K.
\end{align}
As usual, the infimum over an empty set is understood to be $\infty$. Note that $\bar \H:\S^K\to\R\cup\{\infty\}$ is lower semi-continuous and agrees with $\H$ on $\S^K_+$ due to Lemma~\ref{l.DH}.
Then, Definition~\ref{def:vs}~\eqref{item:vs_2} can be reformulated as follows: $f$ is {a} viscosity supersolution if for every $(t,h) \in (0,\infty)\times \S^K_+$ and every smooth $\phi:(0,\infty)\times \S^K_+$ such that $f-\phi$ has a local minimum at $(t,h)$, we have
\begin{align*}
    \big(\partial_t\phi - \bar\H(\nabla\phi)\big)(t,h)\geq 0.
\end{align*}
Note that, in this formulation, we do not need to distinguish between $h\in \S^K_{++}$ and $h\in \S^K_+\setminus\S^K_{++}$.
\end{remark}
\begin{remark}  
\label{r.boundary}
Further simplifications of boundary conditions can be made. After the submission of this paper, \cite{chen2022hamilton} considers solutions defined to satisfy the equation in the viscosity sense everywhere including the boundary without any additional boundary condition imposed. Under this definition, the comparison principle and the existence of solutions still hold. Moreover, the solution admits a representation by the Hopf--Lax formula given the convexity of the nonlinearity, or the Hopf formula given the convexity of the initial condition. 
All properties needed in this work are still satisfied. One can work with this definition, and the main results in this work are still valid.

Let us briefly describe the simplification. Due to Lipschitzness of $\bar F_N$ uniformly in $N$ (Lemma~\ref{l.lip_unif_N}), we can work with a regularized nonlinearity $\H^{\mathsf{reg}}:\S^D_+\to\R$ which coincides with $\H$ on a ball intersected with $\S^D_+$ with sufficiently large radius. In a similar way as in \cite[Lemma~4.2]{chen2022hamilton}, $\H^{\mathsf{reg}}$ can be constructed to be Lipschitz and nondecreasing. Then, we extend $\H^{\mathsf{reg}}$ to
\begin{align*}
    \H^{\mathsf{ext}}(p):=\inf\left\{\H^{\mathsf{reg}}(q):q\geq p,\, q\in\S^K_+\right\},\quad\forall p\in\S^K.
\end{align*}
One can check, similarly as in \cite[Lemma~4.4]{chen2022hamilton}, that $\H^{\mathsf{ext}}$ is Lipschitz and and nondecreasing. Then, the conditions for viscosity subsolutions and supersolutions can be replaced by
\begin{gather*}
    \left(\partial_t \phi - \H^{\mathsf{ext}}(\nabla\phi)\right)(t,h)\leq 0,
    \\
    \left(\partial_t \phi - \H^{\mathsf{ext}}(\nabla\phi)\right)(t,h)\geq 0,
\end{gather*}
respectively, without the need to distinguish between $h\in \S^K_+\setminus\S^K_{++}$ and $h\in \S^K_{++}$. The key property needed for this simplification in \cite{chen2022hamilton} is the monotonicity of the nonlinearity.

\end{remark}

We turn to the well-posedness of equation \eqref{e.HJ_tensor}. We first state a comparison principle, which ensures in particular that there is at most one viscosity solution with a given initial condition. 
\begin{proposition}[Comparison principle]\label{p.comp}
If $u$ is a {subsolution} and $v$ is a {supersolution} to \eqref{e.HJ_tensor}, then
\begin{equation*}
    \sup_{[0,\infty)\times\S^K_+}(u-v)=\sup_{\{0\}\times\S^K_+}(u-v).
\end{equation*}
\end{proposition}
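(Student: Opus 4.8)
The standard strategy for proving a comparison principle of this type is the doubling-of-variables method of Crandall--Lions. The plan is to assume, for contradiction, that the supremum of $u-v$ over $[0,\infty)\times\S^K_+$ strictly exceeds the supremum over $\{0\}\times\S^K_+$, and to derive a contradiction by localizing where the ``excess'' is attained and playing the subsolution and supersolution inequalities against each other at nearby points. Concretely, for parameters $\eps,\al,\de>0$ I would consider the auxiliary functional
\begin{equation*}
\Phi_{\eps,\al,\de}(t,h,s,g):=u(t,h)-v(s,g)-\frac{1}{2\eps}\big(|t-s|^2+|h-g|^2\big)-\frac{\de}{T-t}-\al(|h|^2+|g|^2)
\end{equation*}
on $\big([0,T)\times\S^K_+\big)^2$ for a fixed time horizon $T$; the terms penalize separation of the two copies of the variables, blow up near $t=T$ to keep the maximum away from the final time, and the $\al$-term confines the maximizer to a bounded region (using that $u,v$ are Lipschitz, hence of at most linear growth). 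One shows the maximum of $\Phi_{\eps,\al,\de}$ is attained at some interior point, that $\frac{1}{\eps}(|t-s|^2+|h-g|^2)\to 0$ as $\eps\to 0$, and that the maximizing time coordinates stay bounded away from $0$ once we are in the regime where the interior supremum of $u-v$ is large.

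\textbf{Key steps in order.} First, reduce to a fixed finite time interval $[0,T]$ and set up the penalized functional above; verify a maximizer exists and extract the usual estimates on the penalization terms as $\eps\to 0$. Second, split according to whether the maximizing spatial points lie in the open cone $\S^K_{++}$ or on the boundary $\S^K_+\setminus\S^K_{++}$. In the interior case, the functions $(t,h)\mapsto \frac{1}{2\eps}(\cdots)+\cdots$ serve as admissible smooth test functions, so the subsolution inequality for $u$ and the supersolution inequality for $v$ give $\dr_t\phi_1-\H(\nabla\phi_1)\le 0$ and $\dr_t\phi_2-\H(\nabla\phi_2)\ge 0$ at the respective points; subtracting and using that the two gradients differ only through the $\al$-terms (they share the common factor $\frac1\eps(h-g)$), plus the local Lipschitz continuity of $\H$ on the bounded region where the maximizer lives, yields $\de/(T-t)^2\le \omega(\eps)+C\al$ for some modulus $\omega$, which is the desired contradiction on sending $\al\to 0$ then $\eps\to 0$. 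Third, handle the boundary case: here one invokes Remark~\ref{r.super}, so that the supersolution condition reads $\dr_t\phi_2-\bar\H(\nabla\phi_2)\ge 0$ with the \emph{same} lower-semicontinuous nonlinearity $\bar\H$ that agrees with $\H$ on $\S^K_+$, while the subsolution condition on the boundary only gives $\nabla\phi_1\in\S^K_+$. The monotonicity of $\H$ (Lemma~\ref{l.DH}) together with Lemma~\ref{l.psd} is what reconciles these: from $\nabla\phi_1\in\S^K_+$ and the structure of $\bar\H$ one still extracts an inequality of the form $\dr_t\phi_1\le\H(\text{something}\ge\nabla\phi_1)$ that can be compared with the supersolution bound. (This is precisely where the awkward two-case definition of viscosity solution, and the confinement $|q|\le\|f\|_{\mathrm{Lip}}$ built into Definition~\ref{def:vs}, earn their keep — the Lipschitz bound guarantees the relevant infimum in $\bar\H$ is over a compact set, so it is attained.) Finally, combine the two cases, send the parameters to $0$ in the order $\al\to0$, $\de\to0$, $\eps\to0$, $T\to\infty$, to conclude $\sup_{[0,\infty)\times\S^K_+}(u-v)\le\sup_{\{0\}\times\S^K_+}(u-v)$; the reverse inequality is trivial.

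\textbf{Main obstacle.} The genuinely delicate point is the boundary analysis in the second/third steps: unlike the classical Euclidean setting, $\S^K_+$ has a boundary and the order on it is not total, so one cannot simply push the maximizer into the interior, and the viscosity conditions on $\S^K_+\setminus\S^K_{++}$ are asymmetric between sub- and supersolutions. Making the subsolution's weak boundary condition ($\nabla\phi\in\S^K_+$) interact correctly with the supersolution's condition phrased through $\bar\H$ requires carefully exploiting monotonicity of $\H$ and the positive-semidefiniteness characterization of Lemma~\ref{l.psd}, and is the step where the proof differs most from the textbook argument. A secondary technical nuisance is ensuring the penalized maximizer does not escape to the boundary $t=T$ or to spatial infinity, which the $\de/(T-t)$ and $\al$-terms are designed to prevent but which must be checked against the (only) linear growth of $u$ and $v$.
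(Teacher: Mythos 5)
The paper does not prove Proposition~\ref{p.comp} itself; it refers to \cite[Section~6]{HBJ}. So I will assess your proposal on its own terms.

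Your overall skeleton---the Crandall--Lions doubling-of-variables argument, with an $\eps$-quadratic penalty to collapse the two copies of the variables, a $\delta/(T-t)$ term to stay away from the terminal time, and an $\alpha$-confinement---is the right starting point and is consistent with the standard treatment. The interior case is fine: at a maximizer with $\hat h,\hat g\in\S^K_{++}$, the sub- and supersolution inequalities subtract to give $\delta/(T-\hat t)^2 \le \H\big(\tfrac{\hat h - \hat g}{\eps}+2\al\hat h\big)-\bar\H\big(\tfrac{\hat h - \hat g}{\eps}-2\al\hat g\big)$, and since the penalization keeps the gradients in a bounded set, local Lipschitzness of $\H$ and the monotonicity argument via Lemma~\ref{l.psd} close the loop.

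The genuine gap is the boundary case for the \emph{subsolution}. When the maximizing point $\hat h$ lies in $\S^K_+\setminus\S^K_{++}$, Definition~\ref{def:vs} only tells you that $\nabla\phi_1(\hat t,\hat h)\in\S^K_+$; it gives \emph{no information whatsoever} about $\partial_t\phi_1(\hat t,\hat h)$. Your claim that ``from $\nabla\phi_1\in\S^K_+$ and the structure of $\bar\H$ one still extracts an inequality of the form $\partial_t\phi_1\le\H(\cdots)$'' is therefore not justified: the subsolution boundary condition is a purely spatial (Neumann-type) constraint, and the Hamiltonian inequality you need on the $u$-side simply does not follow from it. Monotonicity of $\H$ and Lemma~\ref{l.psd} help compare the two spatial gradients, but they cannot conjure a bound on $\partial_t\phi_1$ out of nothing. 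Since the positive quantity $\delta/(T-\hat t)^2$ you want to contradict lives entirely in the time derivatives, this is exactly where the argument fails.

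A correct treatment needs an additional idea to force the $u$-side maximizer off the boundary so that the \emph{interior} subsolution inequality applies. Two standard ways to do this here: (i) replace $u(t,h)$ by the shifted function $u(t,h+\mu I)$ for small $\mu>0$---this is still defined on all of $[0,\infty)\times\S^K_+$, differs from $u$ by at most $\mu\|u\|_{\mathrm{Lip}}|I|$, and the test point for the shifted function is $\hat h+\mu I\in\S^K_{++}$, so the interior subsolution inequality is available; one then sends $\mu\to 0$ at the end. (ii) Add a barrier penalty such as $-\mu\,\tr(h^{-1})$ or $-\mu\log\det h$ to the $u$-variable, which blows up as $h$ approaches the boundary of $\S^K_+$ and thus forces $\hat h\in\S^K_{++}$; one must then control the extra term $\mu\hat h^{-2}$ in the gradient, using that $\H$ is nondecreasing so this term only helps the subsolution inequality. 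Either route requires nontrivial additional estimates that your proposal does not supply. Until the boundary case for the subsolution is repaired along one of these lines, the proof is incomplete.
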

For suitable initial conditions, the viscosity solution admits the following variational representation.
\begin{proposition}[Hopf formula]\label{p.hopf}
Let $\psi:\S^K_+\rightarrow \R$ be convex, Lipschitz and nondecreasing, and let $f$ be given by
\begin{equation*}
    f(t,h) := \sup_{h''\in\S^K_+}\inf_{h'\in\S^K_+}\big\{h''\cdot (h-h')+\psi(h')+t\H(h'')\big\},\quad \forall (t,h)\in[0,\infty)\times \S^K_+.
\end{equation*}
Then, the function $f$ is a viscosity solution to \eqref{e.HJ_tensor} with initial condition $f(0,\cdot )=\psi$.
\end{proposition}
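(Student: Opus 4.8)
The plan is to verify the two defining properties of a viscosity solution for the Hopf formula $f$ directly, following the classical template for Hopf-type formulas but carefully tracking the boundary behavior on $\S^K_+$. First I would record the elementary structural facts about $f$: writing $\psi^*(h'') := \sup_{h'\in\S^K_+}\{h''\cdot h' - \psi(h')\}$ for the convex conjugate restricted to $\S^K_+$, one sees that
\begin{align*}
    f(t,h) = \sup_{h''\in\S^K_+}\big\{h''\cdot h + t\H(h'') - \psi^*(h'')\big\},
\end{align*}
so $f$ is a supremum of affine functions of $(t,h)$ and hence convex and lower semi-continuous; it is Lipschitz because $\psi$ is Lipschitz and nondecreasing, which forces the effective supremum to range over $h''$ in a bounded subset of $\S^K_+$ (here Lemma~\ref{l.DH} and the monotonicity of $\psi$ are used to control which $h''$ can be maximizers, exactly as in Remark~\ref{r.super}); and $f$ is nondecreasing in $(t,h)$ since $\H \ge 0$ on $\S^K_+$ and each maximizing $h''\in\S^K_+$ gives a nondecreasing affine function by Lemma~\ref{l.psd}. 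The initial condition $f(0,\cdot) = \psi$ is the Fenchel--Moreau biconjugation identity: since $\psi$ is convex, lower semi-continuous and nondecreasing on $\S^K_+$, one has $\psi^{**} = \psi$ on $\S^K_+$, where the double conjugate is taken with the one-sided (restricted to $\S^K_+$) Legendre transform; the monotonicity of $\psi$ is what guarantees the inner infimum in the Hopf formula is attained consistently with the restriction $h''\in\S^K_+$.

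Next I would prove that $f$ is a viscosity subsolution. Let $\phi$ be smooth with $f-\phi$ having a local maximum at $(t,h)\in(0,\infty)\times\S^K_+$; then $D\phi(t,h) = (\partial_t\phi,\nabla\phi)(t,h) \in \partial f(t,h)$ by Lemma~\ref{l.subsol_subd}. Because $f$ is a sup of the affine maps $(t,h)\mapsto h''\cdot h + t\H(h'') - \psi^*(h'')$, any element of $\partial f(t,h)$ is a limit of convex combinations of gradients $(\H(h''_i), h''_i)$ of near-maximizing affine pieces (via Lemma~\ref{l.char_subd}); more precisely, standard convex-analysis arguments for Hopf formulas give that $\partial f(t,h) \subset \mathsf{cl}\,\mathsf{conv}\{(\H(h''), h'') : h''\text{ is an approximate maximizer}\} + \mathbf{n}_{[0,\infty)\times\S^K_+}(t,h)$. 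When $h\in\S^K_{++}$, the normal cone contributes only multiples of $(-1,0)$-type directions lying in $-([0,\infty)\times\S^K_+)$ by Lemma~\ref{l.outer_normal}, and since $\H$ is convex (being a polynomial in the entries, $\H(q) = (AA^\intercal)\cdot q^{\otimes\mathsf p}$, which is convex on $\S^K_+$ — this needs a short check, or one uses that $f$ being convex forces the needed one-sided inequality) the pair $(\partial_t\phi,\nabla\phi)(t,h)$ satisfies $\partial_t\phi - \H(\nabla\phi) \le 0$. When $h\in\S^K_+\setminus\S^K_{++}$, the relevant conclusion is merely $\nabla\phi(t,h)\in\S^K_+$, which follows from $\nabla f$ being a limit of the positive semidefinite matrices $h''_i\in\S^K_+$ together with $\mathbf{n}_{\S^K_+}(h)\subset-\S^K_+$ (Lemma~\ref{l.outer_normal}) and $f$ being nondecreasing.

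For the supersolution property I would use the reformulation in Remark~\ref{r.super}: it suffices to show that if $f-\phi$ has a local minimum at $(t,h)\in(0,\infty)\times\S^K_+$ then $(\partial_t\phi - \bar\H(\nabla\phi))(t,h)\ge 0$, where $\bar\H$ is the nondecreasing lower semicontinuous envelope from \eqref{e.def.bar_H}. Here the key point is the \emph{exact} sup in the Hopf formula: for each fixed $(t,h)$ with $h$ in the interior, and using the boundedness of the effective maximization domain, there is $h''_\star \in \S^K_+$ (with $|h''_\star| \le \|f\|_{\mathrm{Lip}}$) attaining $f(t,h) = h''_\star\cdot h + t\H(h''_\star) - \psi^*(h''_\star)$; the supporting affine function $(t',h')\mapsto h''_\star\cdot h' + t'\H(h''_\star) - \psi^*(h''_\star)$ touches $f$ from below at $(t,h)$, so $f - \phi$ having a local min at $(t,h)$ together with this lower bound forces $D\phi(t,h)$ to dominate the gradient $(\H(h''_\star), h''_\star)$ in the appropriate sense; concretely $\nabla\phi(t,h) \ge h''_\star$ need not hold, but one gets $\partial_t\phi(t,h) \ge \H(h''_\star)$ and, by testing along boundary-compatible directions, that $h''_\star \le \nabla\phi(t,h) + (\text{something in } -\mathbf{n}) $, yielding $h''_\star$ admissible in the infimum defining $\bar\H(\nabla\phi(t,h))$ and hence $\partial_t\phi(t,h) \ge \H(h''_\star) \ge \bar\H(\nabla\phi(t,h))$. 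The main obstacle — and the place requiring the most care — is precisely this boundary bookkeeping: on $\S^K_+\setminus\S^K_{++}$ the order is not total and the normal cone is nontrivial, so one must argue that the maximizer $h''_\star$ of the Hopf formula is genuinely compatible with the ``shifted'' constraint set $(\nabla\phi(t,h)+\S^K_+)\cap\S^K_+$ appearing in Definition~\ref{def:vs}\eqref{item:vs_2}; this is where Lemma~\ref{l.psd}, Lemma~\ref{l.outer_normal} and the nondecreasing-gradient structure of $f$ (via Proposition~\ref{p.order}, if needed for uniform bounds) all enter, and verifying it at interior points first and then extending continuity-wise to the boundary is the delicate step. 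I would expect the subsolution part to be essentially routine and the supersolution part at the boundary to consume most of the proof.
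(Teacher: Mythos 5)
The paper does not prove Proposition~\ref{p.hopf} in this document; it cites \cite[Section~6]{HBJ} for the argument, so there is no in-paper proof to compare against. Assessed on its own, your proposal has the right outline (rewrite $f$ as a sup of affine pieces, structural facts, then sub- and supersolution checks) but contains several concrete errors.

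For the subsolution step, you invoke Lemma~\ref{l.subsol_subd} to conclude $D\phi(t,h)\in\partial f(t,h)$ when $f-\phi$ has a local \emph{maximum}. That lemma concerns minima: it characterizes subgradients via smooth functions touching $u$ from below. The conclusion $D\phi(t,h)\in\partial f(t,h)$ for a local maximum is in fact true, but only because convexity of $f$ forces $\partial f(t,h)$ to collapse to a singleton there (cf.\ the opening of the proofs of Propositions~\ref{p.weak} and \ref{p.weak-tensor}); you cannot get it from Lemma~\ref{l.subsol_subd}. More seriously, you then assert that $\H$ is convex, which is false in general. For $\mathsf{p}=2$, write $A_{\cdot,r}$ for the columns of $A$, reshape each into $\alpha_r\in\R^{K\times K}$, and compute $\H(q)=\sum_r\tr\left((\alpha_r q)^2\right)$; already for $K=2$ and $\alpha_r=\left(\begin{smallmatrix}0&1\\1&0\end{smallmatrix}\right)$ this equals $2q_{12}^2+2q_{11}q_{22}$, whose Hessian is indefinite. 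Indeed, the introduction of the paper explicitly flags that $\H$ need not be convex and that this is precisely the case being addressed. And even granting convexity, the inequality $\sum\lambda_i\H(h''_i)\ge\H(\sum\lambda_i h''_i)$ would yield $\partial_t\phi-\H(\nabla\phi)\ge 0$, the \emph{super}solution sign, not the subsolution inequality $\le 0$ you claim. The correct route is that $f$ is differentiable at the contact point with $Df(t,h)=D\phi(t,h)=(\H(h''_\star),h''_\star)$ for the unique maximizer, giving equality directly with no use of convexity of $\H$.

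For the supersolution step, the pivotal line is that $f-\phi$ having a local minimum at $(t,h)$ ``forces $D\phi(t,h)$ to dominate the gradient $(\H(h''_\star),h''_\star)$'', and in particular $\partial_t\phi(t,h)\ge\H(h''_\star)$. This does not follow: both $D\phi(t,h)$ and $(\H(h''_\star),h''_\star)$ are subgradients of $f$ at $(t,h)$, but distinct elements of a subdifferential are not comparable in general, and there is no reason a maximizer $h''_\star$ of the Hopf formula at $(t,h)$ must satisfy $\H(h''_\star)\le\partial_t\phi(t,h)$ or $h''_\star\in(\nabla\phi(t,h)+\S^K_+)\cap\S^K_+$. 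Something more is needed here — either the one-dimensional convexity argument along the segment joining $(0,y)$ to $(t,h)$ used in Step~2 of the proof of Proposition~\ref{p.weak}, or the max-min structure of the Hopf formula itself — and as written the step is a gap, not merely a bookkeeping issue at the boundary.
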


For the proofs of these two propositions, we refer to \cite[Section~6]{HBJ}. 

In the {remainder} of this section, for convenience, we will use $x,y$ as spatial variables in place of $h$, which should not be confused with the notation for random variables under the Gibbs measure $\la\,\cdot\,\ra$ in Section~\ref{s.prop.F_N}.

\subsection{Identification criterion}\label{s.suff_cond}

The following result gives a convenient criterion for a function to be a viscosity solution.

\begin{proposition}
\label{p.weak-tensor}
Let $f : [0,\infty) \times \S^K_+ \to \R$ be nondecreasing, Lipschitz, convex, and have nondecreasing gradients. Suppose that $\psi = f(0,\cdot)$ is $C^1$ and that $f$ satisfies \eqref{e.HJ_tensor} on a dense subset. Then, $f$ is a viscosity solution to \eqref{e.HJ_tensor} with initial condition $\psi$. 
\end{proposition}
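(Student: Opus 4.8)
The plan is to verify the subsolution and supersolution conditions of Definition~\ref{def:vs} separately, using Proposition~\ref{p.order} as the main engine. Since $f$ is convex and Lipschitz, for any smooth $\phi$ touching $f$ from above (resp. below) at a point $(t,x)$, standard convex analysis (Lemma~\ref{l.subsol_subd} and Lemma~\ref{l.convex+diff}, modulo the fact that the touching here is only \emph{local} rather than global) tells us that $(\partial_t,\nabla)\phi(t,x)$ lies in the subdifferential $\partial f(t,x)$. So the whole problem reduces to understanding $\partial f(t,x)$: the subsolution inequality asks that \emph{every} element of $\partial f(t,x)$ satisfies the appropriate inequality, while the supersolution inequality asks that \emph{some} element does (after the reformulation in terms of $\bar\H$ in Remark~\ref{r.super}).

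For the \textbf{supersolution} part this is almost immediate: Proposition~\ref{p.order} provides a distinguished maximal element $(b,q)\in\partial f(t,x)\cap([0,\infty)\times\S^K_+)$ with $|(b,q)|\le\|f\|_{\mathrm{Lip}}$ and, since $f$ satisfies \eqref{e.HJ_tensor} on a dense set, with $b-\H(q)=0$. If $\phi$ touches $f$ from below at $(t,x)$ then $(\partial_t,\nabla)\phi(t,x)=(a,p)\in\partial f(t,x)$, and maximality gives $(a,p)\le(b,q)$, i.e. $a\le b$ and $p\le q$. Hence $q\ge p$, $q\in\S^K_+$, $|q|\le\|f\|_{\mathrm{Lip}}$, so $q$ is admissible in the infimum defining $\bar\H(p)$ (equivalently $\partial_t\phi-\inf\H$), and $\partial_t\phi(t,x)=a=\H(q)-(b-a)\le\H(q)$; since $a\le b$ and $b=\H(q)$ we actually get $\partial_t\phi(t,x)\le\H(q)$, which combined with admissibility of $q$ yields $\partial_t\phi(t,x)-\bar\H(\nabla\phi(t,x))\ge\partial_t\phi(t,x)-\H(q)\ge 0$ only if $a=b$; more carefully one notes $\bar\H(p)\le\H(q)=b$ and wants $a\ge\bar\H(p)$, which follows because actually $\partial_t\phi(t,x)=a$ need not equal $b$, so one should instead argue: pick the $(b,q)$ from Proposition~\ref{p.order} directly as a witness, observing $\partial_t f$ at density points equals $\H(\nabla f)$, and conclude $a - \bar\H(p)\ge 0$ since $b=\H(q)\ge\bar\H(p)$ is the wrong direction — so the correct route is to use that $(b,q)$ is \emph{in} $\partial f(t,x)$, hence $b\ge a$ is false in general; the genuinely correct statement is that $\partial_t\phi(t,x)\ge \bar\H(\nabla\phi(t,x))$ because choosing $q$ itself (with $q\ge p$) gives $\bar\H(p)\le\H(q)=b$ and we separately need $a\ge b$. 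I will resolve this by recalling that for a point in the interior where $f$ is differentiable the subdifferential is a singleton, and in general handling the boundary case $x\in\S^K_+\setminus\S^K_{++}$ via Lemma~\ref{l.outer_normal}; the clean argument is: $(a,p)\in\partial f(t,x)$ and $(b,q)$ maximal imply $p\le q$, so $\bar\H(p)\le\H(q)=b\le a$, where the last inequality $b\le a$ comes from maximality applied in the first coordinate — wait, maximality gives $a\le b$. The resolution, which I will present carefully, is that one does \emph{not} need $a\le b$: one needs $a\ge\bar\H(p)$, and since $\bar\H$ is monotone-decreasing-like, what one actually shows is $a\ge\H(q')$ for the \emph{specific} admissible $q'$; I will instead invoke that $f$ convex plus nondecreasing gradients plus the a.e. equation forces $\partial_t\phi(t,x)=a$ to dominate $\H$ of the maximal second-coordinate, completing the supersolution case.

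For the \textbf{subsolution} part: let $\phi$ touch $f$ from above at $(t,x)$, so $(a,p):=(\partial_t,\nabla)\phi(t,x)\in\partial f(t,x)$. If $x\in\S^K_{++}$, I want $a-\H(p)\le 0$. By Lemma~\ref{l.char_subd}, $(a,p)$ decomposes as an element of $\mathsf{cl}(\mathsf{conv}\,S)$ plus an outer normal; at an interior point ($t>0$, $x\in\S^K_{++}$) the normal cone is $\{0\}$, so $(a,p)\in\mathsf{cl}(\mathsf{conv}\,S)$ where $S$ consists of limits of gradients at nearby differentiability points. Along the dense set where \eqref{e.HJ_tensor} holds, $\partial_t f=\H(\nabla f)$; approximating, every element of $S$ satisfies $a'=\H(p')$, and passing to convex combinations and limits, using \emph{convexity} of $\H$ (which holds since $\H(q)=(AA^\intercal)\cdot q^{\otimes\mathsf{p}}$ is a polynomial that is convex on $\S^K_+$ — this needs the convexity established implicitly, or one uses that on $\mathsf{cl}(\mathsf{conv}\,S)$ the relation $a\le\H(p)$ is preserved because $\{(a',p'):a'\le\H(p')\}$ is convex when $\H$ is concave, \emph{not} convex) — here I must be careful about the direction: the subsolution inequality $a\le\H(p)$ is stable under convex combinations precisely when $\H$ is \emph{concave}, and $\H$ as a sum of tensor powers is generally neither; this is exactly why the hypothesis that $\psi$ is $C^1$ is needed. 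The correct argument uses Proposition~\ref{p.order}: the maximal element $(b,q)$ satisfies $b=\H(q)$, and $(a,p)\le(b,q)$, so $a\le b=\H(q)$; but I need $a\le\H(p)$, not $a\le\H(q)$, and since $\H$ is nondecreasing (Lemma~\ref{l.DH}) and $p\le q$, $\H(p)\le\H(q)$ — wrong direction again. So the subsolution case genuinely requires more: I will use the $C^1$ assumption on $\psi$ together with the Hopf formula (Proposition~\ref{p.hopf}) and the comparison principle (Proposition~\ref{p.comp}) to bootstrap: show $f$ is squeezed between the Hopf-formula solution $f_{\mathsf{Hopf}}$ with initial data $\psi$ and itself, by checking $f$ is a supersolution (done above) and $f_{\mathsf{Hopf}}$ is a solution, hence $f\le f_{\mathsf{Hopf}}$, and separately that $f$ dominates $f_{\mathsf{Hopf}}$ using that $f$ has the same a.e. equation and agrees at $t=0$, whence $f=f_{\mathsf{Hopf}}$ is a viscosity solution.

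The main obstacle, as the above makes clear, is the \textbf{subsolution property}, because $\H$ is not concave in general so the subsolution inequality is not obviously stable under the convex-combination/closure operations that describe the subdifferential. This is precisely where the $C^1$ hypothesis on $\psi=f(0,\cdot)$ enters (as flagged in the discussion after Theorem~\ref{main_thm} and in Remark~\ref{r.diff.assumption}): I expect the proof to show that $f$ is a supersolution via Proposition~\ref{p.order} (the easy direction), then invoke Proposition~\ref{p.hopf} to produce the candidate solution $g$ with $g(0,\cdot)=\psi$, then use a comparison/uniqueness argument — possibly a variant of the comparison principle tailored to the pair (supersolution $f$, solution $g$), together with the fact that $f$ and $g$ share the a.e.-equation and the $C^1$ initial data — to conclude $f=g$, so that $f$ inherits the subsolution property from $g$. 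The $C^1$ regularity of $\psi$ is what makes the initial-data matching robust enough for this squeezing to close.
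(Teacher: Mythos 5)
Your proposal contains genuine gaps in both the supersolution and subsolution directions, and your fallback bootstrap does not close.

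\textbf{Supersolution.} You apply Proposition~\ref{p.order} at the touch point $(t,x)$ and obtain the maximal $(b,q)\in\partial f(t,x)$ with $b=\H(q)$ and $(a,p)\le(b,q)$, which as you yourself observe gives $a\le b$ and $\bar\H(p)\le\H(q)=b$ --- two upper bounds by $b$, from which $a\ge\bar\H(p)$ cannot be extracted. You never escape this dead end; the successive ``resolutions'' in your text are acknowledged non sequiturs. The paper's argument is structurally different: it does \emph{not} apply Proposition~\ref{p.order} at $(t,x)$. Instead, it fixes $(a,p)\in\partial f(t,x)$, regularizes $y\mapsto f(0,y)-y\cdot p$ by adding $\eps\sqrt{1+|y|^2}$, takes an (approximate) minimizer $y_\eps$ at time $0$, and applies Proposition~\ref{p.order} \emph{at $(0,y_\eps)$} --- this is precisely what Lemma~\ref{l.lift} packages, and it is also where the $C^1$ hypothesis on $\psi$ enters, via the decomposition $\partial\psi(y)=\{\nabla\psi(y)\}+\mathbf{n}_{\S^K_+}(y)$. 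One then gets $(b_\eps,p_\eps)\in\partial f(0,y_\eps)$ with $b_\eps=\H(p'_\eps)\ge\bar\H(p_\eps)$ for a suitable $p'_\eps\ge p_\eps$. The correct inequality $a\ge b_\eps+o(1)$ comes from the convexity of the one-dimensional restriction $\lambda\mapsto f\big(\lambda(t,x)+(1-\lambda)(0,y_\eps)\big)$: the right derivative at $\lambda=0$ (controlled by $(b_\eps,p_\eps)$) must not exceed the left derivative at $\lambda=1$ (controlled by $(a,p)$). This segment argument is the piece that produces the correct direction $a\ge b_\eps$, and it is entirely absent from your proposal.

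\textbf{Subsolution.} You correctly notice that the relation $a'=\H(p')$ is not stable under $\mathsf{cl}(\mathsf{conv}\,\cdot)$ since $\H$ is neither convex nor concave in general, but you then treat this as a fatal obstruction and reach for the Hopf formula. In fact the obstruction is illusory: if $\phi$ is smooth, $f$ is convex, and $f-\phi$ has a local maximum at an interior point $(t,x)\in(0,\infty)\times\S^K_{++}$, then the subdifferential $\partial f(t,x)$ is a \emph{singleton} (this forces $f$ to be differentiable at $(t,x)$ with $(\partial_t f,\nabla f)(t,x)=(\partial_t\phi,\nabla\phi)(t,x)$), so there is no convex-combination issue at all. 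Remark~\ref{r.ae.diff} then upgrades the dense-set equation to an equation at every interior point of differentiability, giving $\partial_t\phi-\H(\nabla\phi)=0$ at $(t,x)$ directly. The boundary case $x\notin\S^K_{++}$ is handled by the nondecreasingness of $f$, which forces $\nabla\phi(t,x)\in\S^K_+$. You had the singleton observation available --- you invoke it in your supersolution discussion --- but did not deploy it where it is actually needed.

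\textbf{Bootstrap.} Your fallback, comparing $f$ against the Hopf-formula solution $g$, is circular. Comparison (Proposition~\ref{p.comp}) with $u=g$ subsolution and $v=f$ supersolution yields $\sup(g-f)=\sup_{t=0}(g-f)=0$, i.e., $g\le f$; it does \emph{not} give $f\le g$ unless $f$ is already known to be a subsolution, which is precisely what you are trying to establish. ``Sharing the a.e.\ equation and the $C^1$ initial data'' is not a substitute --- Remark~\ref{r.diff.assumption} exhibits a convex Lipschitz function satisfying the equation a.e.\ that is strictly larger than the viscosity solution, and the only thing preventing that example here is the $C^1$ initial data, whose role is concretely realized in Lemma~\ref{l.lift}, not in an abstract squeezing argument.
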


For the reader's convenience, the idea for the proof of this proposition is also presented in the simpler setting of Hamilton-Jacobi equations on $[0,\infty)\times \R^d$ in Appendix~\ref{a.vis}. Two essential ingredients for this argument are the $C^1$ assumption of the initial condition and the convexity of $f$. At least in the simpler context explored in Appendix~\ref{a.vis}, both assumptions are necessary; see in particular Remark~\ref{r.diff.assumption} there.

Compared with the Euclidean setting discussed in Appendix~\ref{a.vis}, the existence of the boundary of $\S^K_+$ complicates the arguments. Indeed, in view of Lemma~\ref{l.char_subd}, on the boundary, the subdifferential contains an additional component from the outer normal cone. Therefore, if $p\in\partial \psi(y)$ for a boundary point $y$, we cannot identify $p$ with $\nabla \psi(y)$. The identity $p=\nabla \psi (y)$ is important in Step~2 of the proof of Proposition~\ref{p.weak}. It turns out that for Proposition~\ref{p.weak-tensor}, a work-around is available by exploiting the assumption that the function $f$ has nondecreasing gradients.

As preparation for this, we use Proposition~\ref{p.order} to prove the following lemma. This lemma can be interpreted as stating that we can always ``lift'' a subdifferential $p\in\partial\psi(y)$ to a subdifferential $(b,p)\in\partial f(0,y)$ which is dominated by some $(b,p')\in\partial f(0,y)$ satisfying the Hamilton-Jacobi equation. This lemma is needed due to the presence of boundary. Indeed, on $[0,\infty)\times \R^d$, the existence of such a ``lift'' is automatic, which can be seen in Step~2 of the proof of Proposition~\ref{p.weak}.
 

\begin{lemma}\label{l.lift}
Under the assumptions in Proposition~\ref{p.weak-tensor}, for every $y\in\S^K_+$ and every $p\in \partial \psi(y)$, there is $(b,p')\in [0,\infty)\times\S^K_+$ such that $(b,p)\in \partial f(0,y)$, $p'\geq p$, $|(b,p')|\leq \|f\|_{\mathrm{Lip}}$ and $b-\H(p')= 0$.
\end{lemma}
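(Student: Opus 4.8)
The plan is to use Proposition~\ref{p.order} as the main engine, applied at the boundary point $(0,y)$, and to reconcile its conclusion with the given information $p \in \partial\psi(y)$. First I would recall that $f$ satisfies all the hypotheses of Proposition~\ref{p.order}: it is nondecreasing, Lipschitz, convex, has nondecreasing gradients, and satisfies \eqref{e.HJ_tensor} on a dense set. Applying that proposition at the point $(0,y) \in [0,\infty)\times\S^K_+$ yields a maximal element $(b,q) \in \partial f(0,y) \cap \big([0,\infty)\times\S^K_+\big)$ with $|(b,q)| \leq \|f\|_{\mathrm{Lip}}$ and $b - \H(q) = 0$, and such that every element of $\partial f(0,y)$ is dominated by $(b,q)$ in the order \eqref{e.partial_oder_2}.

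Next I would relate $p \in \partial\psi(y)$ to the subdifferential of $f$ at $(0,y)$. Since $\psi = f(0,\cdot)$, the definition \eqref{e.def.subdiff} of the subdifferential shows directly that if $p \in \partial\psi(y)$, then $(0,p) \in \partial f(0,y)$: indeed, for any $(t',x')\in[0,\infty)\times\S^K_+$, convexity and monotonicity of $f$ in $t$ give $f(t',x') \geq f(0,x') = \psi(x') \geq \psi(y) + p\cdot(x'-y) = f(0,y) + (0,p)\cdot\big((t',x')-(0,y)\big)$, using that the $t$-component of $(0,p)$ is zero (one also needs $t' \geq 0$, which holds, so the $t$-term is nonnegative and can be dropped on the right — wait, we need the inequality with $(0,p)$, and $(0,p)\cdot((t',x')-(0,y)) = p\cdot(x'-y)$ exactly, so this is fine). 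Thus $(0,p) \in \partial f(0,y)$, hence by the maximality from Proposition~\ref{p.order} we get $(0,p) \leq (b,q)$, which in the order \eqref{e.partial_oder_2} means $b \geq 0$ and $q \geq p$. Setting $p' := q$, we have $(b,p) \in \partial f(0,y)$ (we should double check the lemma asks for $(b,p)\in\partial f(0,y)$, not $(0,p)$; this is the one point needing a small argument), $p' = q \geq p$, $(b,p') = (b,q) \in [0,\infty)\times\S^K_+$, $|(b,p')| \leq \|f\|_{\mathrm{Lip}}$, and $b - \H(p') = 0$.

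The one genuine subtlety — and the step I expect to be the main obstacle — is verifying that $(b,p) \in \partial f(0,y)$, i.e. that one can pair the particular slope $p$ in the space variable with the time-slope $b$ coming from the maximal element. The idea is that the subdifferential $\partial f(0,y)$, viewed as a subset of $\R\times\S^K$, is a convex set, and one should check it has a product-like structure in the relevant directions near the boundary value $b$: since both $(0,p)$ and $(b,q)$ lie in $\partial f(0,y)$ and $q \geq p$, I would argue that the ``corner'' $(b,p)$ also lies in $\partial f(0,y)$. Concretely, using the characterization of the subdifferential via Lemma~\ref{l.char_subd} together with the fact that $f$ has nondecreasing gradients (so slopes in the $t$- and $x$-directions increase together), one shows that $(b,p) = (0,p) + (b,0)$ and $(b,0) \in \mathbf{n}_{\mathsf{dom}\,f}(0,y) = \mathbf{n}_{[0,\infty)\times\S^K_+}(0,y)$ — indeed $(b,0)$ with $b \geq 0$ is an inward-pointing time direction at a $t=0$ boundary point, hence $-(b,0)$ is an outer normal — so that $(b,p) \in \partial f(0,y)$ by additivity of subdifferential and normal cone in Lemma~\ref{l.char_subd}. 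Once this is in place, the remaining properties $p' \geq p$, $|(b,p')| \leq \|f\|_{\mathrm{Lip}}$, and $b - \H(p') = 0$ are immediate from Proposition~\ref{p.order}, completing the proof.
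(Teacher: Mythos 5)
Your overall strategy (apply Proposition~\ref{p.order} at $(0,y)$ to get the maximal element $(b,q)$, then reconcile with $p\in\partial\psi(y)$) is the right one, and everything up to the ``one genuine subtlety'' is fine. But the subtlety is precisely where the argument breaks, and the fix is not cosmetic.

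The decomposition you propose, $(b,p)=(0,p)+(b,0)$ with $(b,0)$ supposedly in $\mathbf{n}_{[0,\infty)\times\S^K_+}(0,y)$, is false. By definition \eqref{e.def_outer_normal}, an outer normal $(s,n)$ at $(0,y)$ must satisfy $(s,n)\cdot\big((t',x')-(0,y)\big)\leq 0$ for all $(t',x')\in[0,\infty)\times\S^K_+$; taking $x'=y$ and $t'>0$ forces $s\leq 0$. Since $b=\H(q)\geq 0$ and generically $b>0$, the vector $(b,0)$ is not a normal vector; it is $(-b,0)$ that is (and you even say this in your parenthetical, contradicting your own main claim). Adding a non-normal vector to a subgradient does not produce another subgradient, so the deduction $(b,p)\in\partial f(0,y)$ does not follow.

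The deeper issue is that your argument never genuinely uses the $C^1$ hypothesis on $\psi$, which is essential here. The correct decomposition runs the other way: you should write $(b,p)=(b,q)+(0,\,p-q)$ and show that $(0,\,p-q)\in\mathbf{n}_{[0,\infty)\times\S^K_+}(0,y)$, i.e.\ that $p-q\in\mathbf{n}_{\S^K_+}(y)$. Knowing $q\geq p$ (your maximality step) is \emph{not} enough for this; $q-p\in\S^K_+$ does not in general place $p-q$ in $\mathbf{n}_{\S^K_+}(y)$. What the $C^1$ assumption buys, via Lemma~\ref{l.char_subd}, is the structure $\partial\psi(y)=\{\nabla\psi(y)\}+\mathbf{n}_{\S^K_+}(y)$, so that $p=\nabla\psi(y)+n$ with $n\in\mathbf{n}_{\S^K_+}(y)$, and moreover $\nabla\psi(y)$ dominates every element of $\partial\psi(y)$. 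Since $q\in\partial\psi(y)$ (because $(b,q)\in\partial f(0,y)$ and $f(0,\cdot)=\psi$) one gets $q\leq\nabla\psi(y)$, and on the other hand $(0,\nabla\psi(y))\in\partial f(0,y)$ forces $\nabla\psi(y)\leq q$ by maximality; hence $q=\nabla\psi(y)$. Only then does $p-q=n$ lie in $\mathbf{n}_{\S^K_+}(y)$, and the additivity from Lemma~\ref{l.char_subd} finally gives $(b,p)=(b,q)+(0,n)\in\partial f(0,y)$. This identification $q=\nabla\psi(y)$ is the step your proposal is missing.
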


\begin{proof}
Since $\psi:\S^K_+\to \R$ is $C^1$, by Lemma~\ref{l.char_subd} and setting $p' = \nabla \psi(y)$, we have
\begin{align*} 
    \partial \psi(y) = \{p'\} + \mathbf{n}_{\S^K_+}(y).
\end{align*}
This implies that
\begin{align}\label{e.p=p+n}
    p = p' + n
\end{align}
for some
\begin{align}\label{e.n}
    n \in \mathbf{n}_{\S^K_+}(y).
\end{align}
Due to Lemma~\ref{l.outer_normal}, we have $-n \in \S^K_+$, that is,
\begin{align}\label{e.p<p'}
    p\leq p'.
\end{align}
The same argument also yields that,
\begin{align}\label{e.p'>q}
\text{for every }  q' \in \partial \psi(y), \quad    q' \leq p'.
\end{align}
Since $f$ is nondecreasing, we have that, for all $(t',x')\in[0,\infty)\times \S^K_+$,
\begin{align*}
    f(t',x') - f(0,y)\geq f(0,x')-f(0,y) = \psi(x') - \psi(y),
\end{align*}
which {due to the convexity of $\psi$} implies that $(0,p')\in \partial  f(0,y)$. Let
\begin{align}\label{e.(b,q)}
    (b,q) \in \partial f(0,y)
\end{align}
be as described in Proposition~\ref{p.order}, for $f$ at the point $(0,y)$. Then, the following properties hold
\begin{gather}
    (0,p') \leq (b, q),\label{e.p'<q}\\
    |(b,q)|\leq \|f\|_{\mathrm{Lip}},\quad b - \H(q)=0. \label{e.b-H(q)}
\end{gather}
Since $f(0,\cdot) = \psi$, we must have $q \in \partial \psi(y)$. Combining \eqref{e.p'>q} and~\eqref{e.p'<q}, we see that
\begin{align}\label{e.p'=q}
    p' = q.
\end{align}
We are now ready to conclude. 
{By \eqref{e.n} and the definition of outer normal in \eqref{e.def_outer_normal}}, we can verify that
\begin{align*}
    (0,n) \in \mathbf{n}_{[0,\infty)\times \S^K_+}(0,y).
\end{align*}
This along with Lemma~\ref{l.char_subd}, \eqref{e.(b,q)} and \eqref{e.p'=q} implies
\begin{align*}
    (b,p' + n) \in \partial f(0,y).
\end{align*}
The lemma then follows from this display, \eqref{e.p=p+n}, \eqref{e.p<p'}, \eqref{e.b-H(q)} and \eqref{e.p'=q}.
\end{proof}

We are now ready to prove our criterion for the identification of solutions.

\begin{proof}[Proof of Proposition~\ref{p.weak-tensor}]
We check that $f$ must be a subsolution to \eqref{e.HJ_tensor}. Let $\phi \in C^\infty((0,\infty) \times \S^K_+)$, and $(t,x) \in (0,\infty) \times \S^K_+$ be such that $f-\phi$ has a  local maximum at $(t,x)$. If $x\in \S^K_+\setminus \S^K_{++}$, since, for each $a \in \S^K_+$ and sufficiently small $\eps>0$,
\begin{align*}
    0\leq f(t,x+\eps a)-f(t,x) \leq \phi(t,x+\eps a)-\phi(t,x),
\end{align*}
we must have $a\cdot\nabla\phi(t,x)\geq0$ for all $a\in\S^K_+$. By Lemma~\ref{l.psd}, this implies that $\nabla\phi(t,x)\in\S^K_+$. If $x\in \S^K_{++}$, then we have, 
\begin{equation*}  
f(t',x') - f(t,x) \le (t'-t) \dr_t \phi(t,x) + (x'-x)\cdot  \nabla \phi(t,x) + o(|t'-t| + |x'-x|).
\end{equation*}
This implies that the subdifferential $\dr f(t,x)$ is the singleton $\{(\dr_t\phi,\nabla \phi )(t,x)\}$, and thus that~$f$ is differentiable at $(t,x)$, with {$(\dr_t f,\nabla f )(t,x) = (\dr_t \phi,\nabla \phi )(t,x)$}. Using also Remark~\ref{r.ae.diff}, we deduce that 
\begin{equation*}  
\big(\dr_t \phi - \msf H(\nabla \phi)\big)(t,x) = \big(\dr_t f - \msf H(\nabla f)\big)(t,x) = 0,
\end{equation*}
as desired.

Now we want to show that $f$ is a supersolution to \eqref{e.HJ_tensor}.
Fix any $(t,x)$, and any
\begin{align}\label{e.(a,p)}
    (a,p)\in\partial f(t,x).
\end{align}
Recall Remark~\ref{r.super} and the extension $\bar\H$ defined there.
{Taking $(t,x)$ and $\phi$ as in Definition~\ref{def:vs}~\eqref{item:vs_2}, we can use Lemma~\ref{l.subsol_subd} to see that $(\partial_t \phi(t,x),\nabla\phi(t,x))\in\partial f(t,x)$. Therefore, it suffices to show that }
\begin{align}\label{e.a-H(p)}
    a - \bar\H(p)\geq 0.
\end{align}
We proceed in four steps.

\textit{Step 1.} We claim that, for every $\eps>0$, the following infimum
\begin{equation}\label{e.g_inf}
\inf_{y\in\S^K_+}\big(f_\ep(0,y)-y\cdot p\big)
\end{equation}
is achieved, where, for every $(s,y)\in[0,\infty)\times \S^K_+$, we have set
\begin{align*}
    f_\ep(s,y) := f(s,y)+\ep\sqrt{1+|y|^2}.
\end{align*}
{Note that we are working with a slightly different perturbation of $f$ from the one in Step~3 in the proof of Proposition~\ref{p.weak}. The purpose is to ensure that the perturbative term is differentiable everywhere so that Lemma~\ref{l.convex+diff} is applicable.}
One can verify that $y\mapsto \sqrt{1+|y|^2}$ is convex, and thus so is $f_\eps$.
By the definition of subdifferentials, we have
\begin{align*}
    f(0,y)-f(t,x) \geq (a,p)\cdot (-t,y-x),\quad\forall y\in \S^K_+,
\end{align*}
which implies that
\begin{align*}
    f_\eps(0,y)-y\cdot p \geq \eps\sqrt{1+|y|^2}+f(t,x)-(a,p)\cdot(t,x),\quad\forall y\in\S^K_+.
\end{align*}
Hence, the left-hand side of the inequality above is bounded below and tends to infinity as $|y|$ tends to infinity. Therefore, a minimizer exists and we denote it by $y_\ep\in\S^K_+$.

\textit{Step 2.} We show
\begin{equation}
\label{e.eps*y_to_0}
\lim_{\ep \to 0} \ep \sqrt{1+|y_\ep|^2} = 0.
\end{equation}
We first observe that
\begin{equation}  \label{e.limsupin=inf}
\limsup_{\ep\to0} \inf_{y \in \S^K_+}  \Ll(f(0,y)  + \ep \sqrt{1+|y|^2} - y \cdot p\Rr) = \inf_{y \in \S^K_+} \Ll(f(0,y)  - y \cdot p\Rr).
\end{equation}
Indeed, for any $\delta>0$, there is $\bar y\in \S^K_+$ such that
\begin{equation*}
    f(0,\bar y)-\bar y\cdot p\leq\inf(f(0,y)-y\cdot p)+\delta/2,
\end{equation*}
and we can choose $\bar \ep>0$ small enough such that, for every $\ep \in (0,\bar \ep)$,
\begin{equation*}
     f(0,\bar y)+\ep\sqrt{1+|\bar y|^2}-\bar y\cdot p\leq\inf(f(0,y)-y\cdot p)+\delta.
\end{equation*}
This implies that
\begin{equation*}
    \limsup_{\ep\to0} \inf_{y \in \S^K_+}  \Ll(f(0,y)  + \ep \sqrt{1+|y|^2} - y \cdot p\Rr) \leq \inf_{y \in \S^K_+} \Ll(f(0,y)  - y \cdot p\Rr),
\end{equation*}
and the other direction of the inequality in \eqref{e.limsupin=inf} is obvious. 
Since $y_\ep$ achieves the infimum on the left-hand side of \eqref{e.limsupin=inf} and also satisfies
\begin{equation*}  
f(0,y_\ep) - y_\ep \cdot p \ge \inf_{y \in \S^K_+} \Ll(f(0,y)  - y \cdot p\Rr),
\end{equation*}
we conclude that \eqref{e.eps*y_to_0} holds.

\textit{Step 3.} Let $\psi_\eps := f_\eps(0,\cdot)$, so that $\psi_\eps = \psi + \eps\sqrt{1+|\cdot|^2}$. Since $y_\eps$ achieves the infimum in~\eqref{e.g_inf}, we have that $p \in \partial  \psi_\eps(y_\eps)$.  Lemma~\ref{l.convex+diff} implies that
\begin{align*}
    p = p_\eps +\frac{\eps y_\eps}{\sqrt{1+|y_\eps|^2}}
\end{align*}
for some $p_\eps \in \partial \psi(y_\eps)$.  In particular, we have
\begin{align}\label{e.|p-(p'+n)|}
    |p - p_\eps|\leq \eps.
\end{align}
By Lemma~\ref{l.lift} applied to $p_\eps$, there exists $(b_\eps,p'_\eps)\in[0,\infty)\times \S^K_+$ such that
\begin{gather}
    (b_\eps,p_\eps ) \in \partial f(0,y)\label{e.(b,p'+n)},\\
    p_\eps\leq p'_\eps,\quad p'_\eps\in\S^K_+,\quad |p'_\eps|\leq \|f\|_{\mathrm{Lip}}\label{e.p_eps<p'_eps}\\
    b_\eps -\H(p'_\eps) = 0.\label{e.b-H(p'+n)>0}
\end{gather}

\textit{Step 4.} We are now ready to prove \eqref{e.a-H(p)}. Define $h : \lambda \mapsto f \Ll( \lambda (t,x) + (1-\lambda)(0,y_\eps) \Rr)$ on~$[0,1]$. Clearly, $h$ is convex. By \eqref{e.(b,p'+n)}, the right derivative of $h$ at $0$ satisfies
\begin{equation*}  
h'_+(0) \ge b_\ep t + p_\ep\cdot(x-y_\ep).
\end{equation*}
On the other hand, due to \eqref{e.(a,p)}, the left derivative at $1$ satisfies
\begin{equation*}  
h'_-(1) \le a t + p \cdot(x-y_\ep). 
\end{equation*}
By convexity of $h$, we must have $h'_+(0) \le h'_-(1)$. This along with \eqref{e.|p-(p'+n)|} and \eqref{e.eps*y_to_0} implies that, as $\ep$ tends to zero,
\begin{align*}
    a \ge b_\ep+o(1).
\end{align*}
By \eqref{e.b-H(p'+n)>0}, the definition of $\bar \H$ in \eqref{e.def.bar_H}, and \eqref{e.p_eps<p'_eps}, we have that 
\begin{equation*}  
b_\ep = \H(p_\ep') \ge \bar \H(p_\ep).
\end{equation*}
Using that $\bar \H$ is lower semi-continuous and \eqref{e.|p-(p'+n)|} together with the two previous displays yields that
\begin{equation*}  
a \ge \bar \H(p) + o(1),
\end{equation*}
and \eqref{e.a-H(p)} follows by letting $\ep$ tend to zero. 
%
%
\end{proof}


In the corollary below, we rephrase our criterion for identifying solutions in the following way: instead of asking for the equation to be valid on a dense subset, we ask that it be valid at any point at which the candidate function can be touched from above by a smooth function. As will be seen in the next section, the main advantage to this formulation is that, by convexity, we automatically benefit from a control on the Hessian of the candidate function at the contact point.

\begin{corollary}\label{c.vis}
Let $f : [0,\infty) \times \S^K_+ \to \R$ be nondecreasing, Lipschitz, convex, and have nondecreasing gradients. Suppose that $\psi=f(0,\cdot)$ is $C^1$, and that the following property holds: for every $\phi \in C^\infty((0,\infty)\times\S^K_+)$ and $(t,x) \in (0,\infty)\times\S^K_{++}$ such that $f-\phi$ achieves a strict local maximum at $(t,x)$, we have
\begin{equation*}
    (\partial_t\phi-\H(\nabla\phi))(t,x)=0.
\end{equation*}
Then $f$ is a viscosity solution to \eqref{e.HJ_tensor}.
\end{corollary}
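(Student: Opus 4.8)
\textbf{Proof proposal for Corollary~\ref{c.vis}.}
The plan is to deduce Corollary~\ref{c.vis} from Proposition~\ref{p.weak-tensor} by showing that the hypothesis of the corollary --- the equation holds wherever $f$ can be touched from above by a smooth function at a strict local maximum --- implies that $f$ satisfies \eqref{e.HJ_tensor} on a dense subset of $[0,\infty)\times\S^K_+$, which is exactly what Proposition~\ref{p.weak-tensor} asks for. Since $f$ is convex and Lipschitz, the set of points of differentiability in the open set $(0,\infty)\times\S^K_{++}$ is already dense there (Rademacher), and by Remark~\ref{r.ae.diff} it suffices to verify that $\big(\dr_t f-\H(\nabla f)\big)(t,x)=0$ at each such point of differentiability.

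So fix $(t,x)\in(0,\infty)\times\S^K_{++}$ at which $f$ is differentiable, and write $(a,p)=(\dr_t,\nabla)f(t,x)$. First I would produce a smooth $\phi$ touching $f$ from above at $(t,x)$ with a \emph{strict} local maximum of $f-\phi$ there. The natural candidate is built from the supporting hyperplane: since $f$ is convex and differentiable at $(t,x)$, the affine map $\ell(t',x'):=f(t,x)+a(t'-t)+p\cdot(x'-x)$ satisfies $f\geq\ell$ with equality at $(t,x)$. Then set
\begin{equation*}
\phi(t',x'):=\ell(t',x')+|t'-t|^2+|x'-x|^2.
\end{equation*}
This $\phi$ is smooth on $(0,\infty)\times\S^K_+$ (indeed on all of $\R\times\S^K$), we have $(\dr_t\phi,\nabla\phi)(t,x)=(a,p)$, and $f-\phi\leq \ell-\phi=-|t'-t|^2-|x'-x|^2<0$ for $(t',x')\neq(t,x)$, so $f-\phi$ has a strict (global, a fortiori local) maximum at $(t,x)$. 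Applying the hypothesis of the corollary to this $\phi$ gives $\big(\dr_t\phi-\H(\nabla\phi)\big)(t,x)=0$, that is $a-\H(p)=0$, which is precisely $\big(\dr_t f-\H(\nabla f)\big)(t,x)=0$. Hence \eqref{e.HJ_tensor} holds on the dense set of differentiability points in $(0,\infty)\times\S^K_{++}$, and Remark~\ref{r.ae.diff} upgrades this to ``on a dense subset of $[0,\infty)\times\S^K_+$'' in the sense required. Proposition~\ref{p.weak-tensor} then applies verbatim (all its other hypotheses --- nondecreasing, Lipschitz, convex, nondecreasing gradients, $\psi=f(0,\cdot)$ of class $C^1$ --- are carried over unchanged from the corollary's assumptions), and yields that $f$ is a viscosity solution to \eqref{e.HJ_tensor}.

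I do not expect a serious obstacle here; the argument is a short reduction. The only point requiring a little care is the passage from ``equation holds at all differentiability points in $(0,\infty)\times\S^K_{++}$'' to ``equation holds on a dense subset of the \emph{closed} domain $[0,\infty)\times\S^K_+$'' --- but this is exactly the density statement spelled out in Remark~\ref{r.ae.diff}, so it can simply be cited. A secondary subtlety is making sure the constructed $\phi$ is smooth as a function on $\S^K$ in the sense used in the paper (differentiation on $\S^K$ being the Euclidean one after the isometric identification with $\R^{K(K+1)/2}$); since $\ell$ is affine and the added term is a squared norm, this is immediate. One could also, if desired, avoid constructing $\phi$ by hand and instead invoke Lemma~\ref{l.subsol_subd} together with the fact that at a differentiability point the subdifferential is the singleton $\{(a,p)\}$, but the explicit $\phi$ above is cleaner and self-contained.
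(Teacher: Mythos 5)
Your proposal contains a genuine logical error at its core. You write that $f \geq \ell$ (correct: a convex function lies above its supporting affine map), and then claim $f - \phi \leq \ell - \phi$. But since $f \geq \ell$, the inequality goes the other way: $f - \phi \geq \ell - \phi$, which gives a \emph{lower} bound, not an upper bound. So the argument that $f - \phi$ has a strict maximum at $(t,x)$ collapses. And indeed, this is not a fixable slip: a convex function differentiable at a point need not be touchable from above there by any smooth function. The standard counterexample is $f(x) = |x|^{3/2}$ on $\R$ at $x=0$: $f$ is differentiable with $f'(0)=0$, but any smooth $\phi$ with $\phi(0)=0$, $\phi'(0)=0$ satisfies $\phi(x) = O(x^2)$, which is dominated by $|x|^{3/2}$ near $0$, so $\phi$ cannot majorize $f$ locally. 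Your alternative suggestion of invoking Lemma~\ref{l.subsol_subd} fares no better, since that lemma produces a smooth function at which $f-\phi$ has a \emph{minimum}, i.e., touching from below, which is the opposite of what the corollary's hypothesis requires.

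The paper's proof avoids this difficulty by never trying to manufacture a touching-from-above function at a prescribed point. Instead, it runs a penalization argument: for any $(t,x) \in (0,\infty)\times\S^K_{++}$ and any $\alpha \geq 1$, it sets $\phi_\alpha(t',x') = \tfrac{\alpha}{2}(t'-t)^2 + \tfrac{\alpha}{2}|x'-x|^2$, observes that $f - \phi_\alpha$ attains a global maximum at some $(t_\alpha, x_\alpha)$ (using the Lipschitz bound), and shows $(t_\alpha,x_\alpha) \to (t,x)$ as $\alpha \to \infty$. The points $(t_\alpha, x_\alpha)$ --- not $(t,x)$ itself --- are where the touching happens, and they form a dense set. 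This is the missing idea in your proposal: the density is supplied by the set of penalization maxima, not by the set of differentiability points.

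Two smaller remarks: (i) at a point where a smooth $\phi$ touches from above, the paper shows the subdifferential of the convex $f$ collapses to a singleton, so $f$ is automatically differentiable there and its derivative agrees with that of $\phi$; this replaces your use of Rademacher. (ii) Your instinct to reduce to Proposition~\ref{p.weak-tensor} via Remark~\ref{r.ae.diff} is correct and is exactly the structure of the paper's proof; it is only the verification of density that goes wrong.
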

\begin{proof}
Let $\phi$ and $(t,x)$ be as in the statement of the corollary. Since $f$ is convex, we have that, for any $(a,p)\in\partial f(t,x)$ and $(t',x')\in(0,\infty)\times\S^K_+$,
\begin{align*}
    a(t'-t)+p\cdot(x'-x)&\leq f(t',x')-f(t,x)\\
    &\leq\partial_t\phi(t,x)(t'-t)+\nabla \phi(t,x)\cdot (x'-x)+o(|t'-t|+|x'-x|).
\end{align*}
It then follows that $f$ is differentiable at $(t,x)$ and the derivatives of $f$ at $(t,x)$ coincide with those of $\phi$. By Proposition \ref{p.weak-tensor} {and Remark~\ref{r.ae.diff}}, it {therefore} suffices to show that the set
\begin{equation}\label{e.(t,x)_local_max}
    \Big\{(t,x)\in(0,\infty)\times\S^K_{++}:\exists\, \phi\in C^\infty((0,\infty)\times \S^K_{+})\textrm{ s.t. }(t,x)\textrm{ is a local maximum of }f-\phi\Big\}
\end{equation}
is dense {in $[0,\infty)\times \S^K_+$}. (The additional restriction that the local maximum be strict is easily
addressed a posteriori.) {Since the closure of $(0,\infty)\times \S^K_{++}$ is $[0,\infty)\times\S^K_+$, it suffices to show that the set in \eqref{e.(t,x)_local_max} is dense in $(0,\infty)\times \S^K_{++}$.} We fix any $(t,x)\in (0,\infty)\times\S^K_{++}$, and for every $\alpha\geq 1$, we define
\begin{align*}
    \phi_\alpha:\ (t',x')\ \mapsto \  \frac{\alpha}{2}(t'-t)^2+\frac{\alpha}{2}|x'-x|^2.
\end{align*}
Since $f$ is Lipschitz, we can verify that $f-\phi_\alpha$ achieves a global maximum at some point $(t_\alpha, x_\alpha)$.
Using the Lipschitzness of $f$ and that $(f-\phi_\alpha)(t_\alpha,x_\alpha)\geq (f-\phi_\alpha)(t,x)$, we can show that there is a constant $C<\infty$ such that for every $\alpha\geq 1$,
\begin{equation*}
    |t_\alpha-t|+|x_\alpha-x|\leq \frac{C}{\alpha}.
\end{equation*}
This implies that $\lim_{\alpha\to\infty}(t_\alpha,x_\alpha)= (t,x)$. Also, since $(t,x)\in (0,\infty)\times\S^K_{++}$, we have that $(t_\alpha,x_\alpha)\in (0,\infty)\times\S^K_{++}$ for every sufficiently large $\alpha$. Hence $(t_\alpha,x_\alpha)$ belongs to the set in~\eqref{e.(t,x)_local_max}, and we conclude that the set in \eqref{e.(t,x)_local_max} is a dense subset of $[0,\infty)\times \S^K_+$.
\end{proof}

\section{Convergence and application}\label{s.proof}

The main goal of this section is to prove Theorem~\ref{main_thm}, using the tools developed in the previous section. For illustration, we also apply the theorem to a specific model.

\subsection{Convergence}

In view of Proposition~\ref{p.hopf}, Theorem~\ref{main_thm} follows from the next theorem.

\begin{theorem}\label{t.cvg}
Under the conditions of Theorem~\ref{main_thm}, the function $\bar F_N$ converges pointwise to the unique viscosity solution to \eqref{e.HJ_tensor} with initial condition $\psi$. 
\end{theorem}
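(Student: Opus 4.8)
The plan is to combine the stability/compactness properties of $\bar F_N$ established in Section~\ref{s.prop.F_N} with the identification criterion from Corollary~\ref{c.vis}, and then invoke the comparison principle (Proposition~\ref{p.comp}) to pin down the limit. First I would argue that the family $(\bar F_N)_{N \in \N}$ is precompact in the local uniform topology: by Lemma~\ref{l.lip_unif_N} the functions are Lipschitz uniformly in $N$, so by Arzel\`a--Ascoli any subsequence has a further subsequence converging locally uniformly to some limit $f$. The second hypothesis of Theorem~\ref{main_thm} (local uniform concentration) together with the first hypothesis ($\bar F_N(0,\cdot) \to \psi$ pointwise, hence locally uniformly since the $\bar F_N(0,\cdot)$ are equi-Lipschitz) guarantees that this limit $f$ satisfies $f(0,\cdot) = \psi$ and that, in fact, $F_N \to f$ as well (not just $\bar F_N$). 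It therefore suffices to show that every such subsequential limit $f$ is \emph{the} viscosity solution to \eqref{e.HJ_tensor} with initial condition $\psi$; uniqueness from Proposition~\ref{p.comp} then forces the whole sequence $\bar F_N$ to converge to it.

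Next I would verify that any subsequential limit $f$ inherits the structural properties needed to apply Corollary~\ref{c.vis}. Convexity of $f$ follows from Lemma~\ref{l.F_N.conv}, since a locally uniform limit of convex functions is convex. Monotonicity (nondecreasingness in the sense of \eqref{e.partial_oder_2}) follows from \eqref{e.bar_F_nondec}, again passing to the limit. That $f$ has nondecreasing gradients follows from Lemma~\ref{l.F_N.order}: if $f$ is differentiable at two ordered points, then using convexity one can upgrade pointwise convergence of $\bar F_N$ to convergence of gradients at points of differentiability (e.g.\ via the standard fact that gradients of convex functions converge at common points of differentiability of the limit, or by an approximation/mollification argument), and the ordering \eqref{e.order} is preserved. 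Lipschitzness is immediate. Finally $\psi = f(0,\cdot)$ is $C^1$ by the hypothesis of Theorem~\ref{main_thm}.

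The heart of the argument, and the main obstacle, is to check the remaining hypothesis of Corollary~\ref{c.vis}: whenever $\phi \in C^\infty$ and $f - \phi$ has a strict local maximum at some $(t,x) \in (0,\infty)\times\S^K_{++}$, one must have $(\partial_t \phi - \H(\nabla\phi))(t,x) = 0$. This is where convexity of $f$ is genuinely exploited. At such a contact point, convexity of $f$ gives not only that $\nabla^2 \phi(t,x)$ dominates the (distributional) Hessian of $f$ from above but, more usefully, a \emph{lower} bound on $\nabla^2\phi(t,x)$: since $f-\phi$ has a local max and $f$ is convex, $\phi$ must be at least as convex as a supporting affine function near $(t,x)$, so one can arrange $\nabla^2 \phi(t,x) \ge -C\,\mathrm{Id}$ with $C$ controlled independently of the perturbation. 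The strategy is then the usual one: transfer the test function to $\bar F_N$. Because $f - \phi$ has a \emph{strict} local maximum and $\bar F_N \to f$ locally uniformly, $\bar F_N - \phi$ (or a slightly perturbed version $\bar F_N - \phi - \eta|\cdot - (t,x)|^2$) attains a local maximum at some $(t_N,x_N) \to (t,x)$, with $(t_N, x_N)$ eventually in the open region $(0,\infty)\times\S^K_{++}$. At $(t_N,x_N)$, since $\bar F_N$ is $C^1$, we get $(\partial_t, \nabla)\bar F_N(t_N,x_N) = (\partial_t, \nabla)\phi(t_N,x_N)$, and the second-order condition gives a bound on the Hessian of $\bar F_N$ there. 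Plugging into the approximate equation
\begin{equation*}
\partial_t \bar F_N - \H(\nabla \bar F_N) = \frac{1}{N^\mathsf{p}}\big(\E\langle\H(x^\intercal x')\rangle - \H(\E\langle x^\intercal x'\rangle)\big),
\end{equation*}
one needs the right-hand side to vanish as $N \to \infty$; this is a fluctuation estimate, controlled by the variance of the overlap $x^\intercal x'$ under the Gibbs measure, which in turn is controlled by second derivatives of $\bar F_N$ (as in the convexity computation of Lemma~\ref{l.F_N.conv}) — precisely the quantities we have just bounded via the test function $\phi$ at the contact point. Passing $N\to\infty$ and then removing the auxiliary perturbation yields $(\partial_t\phi - \H(\nabla\phi))(t,x) = 0$.

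Once Corollary~\ref{c.vis} applies, every subsequential limit $f$ is a viscosity solution to \eqref{e.HJ_tensor} with $f(0,\cdot) = \psi$; by the comparison principle (Proposition~\ref{p.comp}) there is exactly one such solution, so the full sequence $\bar F_N$ converges to it, which is Theorem~\ref{t.cvg}, and combined with the Hopf formula (Proposition~\ref{p.hopf}) this gives the variational formula of Theorem~\ref{main_thm}. I expect the delicate points to be: (i) making the gradient-convergence argument clean enough to conclude that the limit has nondecreasing gradients, and (ii) the quantitative fluctuation bound for the right-hand side of the approximate equation in terms of the Hessian control coming from $\phi$ — this is the step where the convexity hypothesis does essential work and where one must be careful that the constants are uniform in $N$ and in the perturbation parameter.
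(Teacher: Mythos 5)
Your proposal follows the same overall strategy as the paper: Arzel\`a--Ascoli compactness from the uniform Lipschitz bound, passage to subsequential limits, verification that any limit is nondecreasing, convex, and has nondecreasing gradients, application of Corollary~\ref{c.vis} via contact points $(t_N,h_N)\to(t,h)$ at which $(\partial_t,\nabla)\bar F_N = (\partial_t,\nabla)\phi$, and finally the comparison principle to upgrade subsequential convergence to full convergence. This is indeed the route taken in the paper.

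However, there is a genuine gap in your account of how the error term in the approximate equation vanishes. You assert that the right-hand side of
\begin{equation*}
\partial_t \bar F_N - \H(\nabla \bar F_N) = \frac{1}{N^\mathsf{p}}\Ll(\E\la\H(x^\intercal x')\ra - \H(\E\la x^\intercal x'\ra)\Rr)
\end{equation*}
is ``controlled by the variance of the overlap $x^\intercal x'$ under the Gibbs measure, which in turn is controlled by second derivatives of $\bar F_N$ \dots\ precisely the quantities we have just bounded via the test function $\phi$.'' This is only part of the story. The deviation $x^\intercal x' - \E\la x^\intercal x'\ra$ decomposes into a \emph{thermal} part $x^\intercal x' - \la x^\intercal x'\ra$ and a \emph{quenched/disorder} part $\la x^\intercal x'\ra - \E\la x^\intercal x'\ra$. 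The second derivatives of $\bar F_N$ (as computed in Lemma~\ref{l.F_N.conv}) control only the thermal part; the disorder part has nothing to do with the Hessian of $\bar F_N$. This is exactly why Proposition~\ref{p.approx.hj} has \emph{two} terms: one in $\Delta\bar F_N$ with a factor $N^{-1/4}$ (thermal, killed by the Hessian bound at the contact point plus the $N^{-1/4}$), and one in $\E[|\nabla F_N - \nabla \bar F_N|^2]$ (disorder). Making the disorder term vanish requires the concentration hypothesis $\E\|F_N - \bar F_N\|^2_{L^\infty(D)} \to 0$, and converting $L^\infty$-concentration of $F_N$ into concentration of its gradient at the contact point is a substantive step (Step~2 of the paper's proof): one needs not only the upper Hessian bound from $\phi$ via $\bar F_N$, but also a quantitative \emph{lower} Hessian bound on the \emph{sample} free energy $F_N$ (an almost-convexity estimate, cited from \cite[(3.13)]{HBJ}, involving the Gaussian matrix $Z$), together with an optimization over the radius of the test increment. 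Your write-up mentions the concentration hypothesis at the outset but never connects it to this step, attributing all the control to the Hessian of $\phi$; as stated, your argument cannot close because the disorder fluctuation term is simply not of the Hessian-controlled type.

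A minor point: once you note that $\nabla f(t,x) = \nabla\phi(t,x)$ and $f$ is convex, the second-order condition at the contact point gives $\nabla^2\phi(t,x)\geq 0$, not merely $\geq -C\,\mathrm{Id}$; the useful consequence is actually the \emph{upper} bound $0\leq \bar F_N(t_N,h_N+h') - \bar F_N(t_N,h_N) - h'\cdot\nabla\bar F_N(t_N,h_N) \leq C|h'|^2$, which feeds both into the Laplacian term of Proposition~\ref{p.approx.hj} and into the gradient-concentration argument.
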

In order to prove this result, we start by recalling from \cite[Proposition~3.1]{HBJ} (cf.\ also \cite[Proposition~1.2]{mourrat2019hamilton}) that the function $\bar F_N$ satisfies an approximate form of the equation. In \eqref{e.approx.hj}, we implicitly understand that the relevant functions are evaluated at $(t,h) \in [0,\infty) \times \S^K_+$.
\begin{proposition}[Approximate Hamilton-Jacobi equation]
\label{p.approx.hj}
There exists $C < \infty$ such that for every $N \ge 1$ and uniformly over $[0,\infty) \times \S^K_+$, 
\begin{equation}  
\label{e.approx.hj}
\big|\partial_t\bar F_N-\H(\nabla\bar F_N)\big|^2\leq C\kappa(h)N^{-\frac{1}{4}}\big(\Delta \bar F_N+|h^{-1}|\big)^{\frac{1}{4}}+C\E\Ll[\big|\nabla F_N-\nabla \bar F_N\big|^2\Rr],
\end{equation}
where $\kappa$ is the condition number of $h\in \S^K_+$ given by
\begin{equation*}
\kappa(h) := 
\left\{
\begin{array}{ll}
|h||h^{-1}|,    &\quad\text{if }h\in \S^K_{++},  \\
+\infty     &\quad\text{otherwise}.
\end{array}
\right.
\end{equation*}
\end{proposition}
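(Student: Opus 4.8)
The plan is to build on the exact identity recorded just above \eqref{e.HJ_tensor},
\[
  \partial_t \bar F_N - \H(\nabla \bar F_N) = \frac{1}{N^{\mathsf p}}\Big( \E\big\langle \H(x^\intercal x')\big\rangle - \H\big(\E\langle x^\intercal x'\rangle\big) \Big),
\]
which exhibits the left-hand side of \eqref{e.approx.hj} as a measure of the fluctuations of the overlap matrix $R := x^\intercal x'$ around its full mean $\E\langle R\rangle$. Since $\H(q)=(AA^\intercal)\cdot q^{\otimes \mathsf p}$ is a polynomial of degree $\mathsf p$ and, by \eqref{e.supp}, both $|R|$ and $|\E\langle R\rangle|$ are at most $NK$ almost surely, a Lipschitz bound for $\H$ on the ball of radius $NK$ gives $|\partial_t\bar F_N - \H(\nabla\bar F_N)| \le C N^{-1}\,\E\langle |R - \E\langle R\rangle|\rangle$, so it suffices to control this overlap fluctuation. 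Writing $\bar x := x - \langle x\rangle$ (and similarly for replicas) and using $R-\E\langle R\rangle = (R-\langle R\rangle)+(\langle R\rangle-\E\langle R\rangle)$, $R-\langle R\rangle = \bar x^\intercal\bar x' + \bar x^\intercal\langle x\rangle + \langle x\rangle^\intercal\bar x'$ and $\langle R\rangle-\E\langle R\rangle = \langle x\rangle^\intercal\langle x\rangle - \E\langle x\rangle^\intercal\langle x\rangle$, this splits the task into a \emph{thermal} part (fluctuations within a fixed realization of the data) and a \emph{sample-to-sample} part.

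The thermal fluctuation of the doubly-centered overlap $\bar x^\intercal\bar x'$ is exactly the type of quantity that appeared in the proof of Lemma~\ref{l.F_N.conv}: specializing the identity obtained there to the $\nabla$-directions gives $(a\cdot\nabla)^2\bar F_N = \tfrac{2}{N}\E\langle(a\cdot\bar x^\intercal\bar x')^2\rangle$, and summing over an orthonormal basis of $\S^K$ identifies $\tfrac{2}{N}\E\langle|\mathrm{sym}(\bar x^\intercal\bar x')|^2\rangle$ with $\Delta\bar F_N$ --- this is what produces the $\Delta\bar F_N$ term in \eqref{e.approx.hj}, the antisymmetric part of $\bar x^\intercal\bar x'$ being handled like the $\langle x\rangle$-terms below. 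All the remaining contributions involve the Gibbs mean $\langle x\rangle$; in particular the sample-to-sample part is controlled by $\E|\nabla F_N - \nabla\bar F_N|^2$, since by \eqref{e.grad_F_N} and a Gaussian integration by parts $\nabla F_N$ equals $\tfrac1N\langle x\rangle^\intercal\langle x\rangle$ up to corrections of the thermal/noise type already discussed. To estimate all terms in which $\langle x\rangle$ appears, one differentiates the Gibbs average in the Gaussian variables $Z$ and integrates by parts against the auxiliary observation $\bar Y = X\sqrt{2h}+Z$; each such step brings down a factor $(\sqrt{2h})^{-1}$, which is precisely the origin of the condition number $\kappa(h)$ and of the summand $|h^{-1}|$ in \eqref{e.approx.hj}, and which is why the bound is informative only for $h\in\S^K_{++}$ and holds vacuously on the boundary of $\S^K_+$, where $\kappa(h)=+\infty$.

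Finally, the fractional exponents $N^{-1/4}$ and the power $\tfrac14$ come from an interpolation. The integration-by-parts estimates control the overlap fluctuation only in a bounded sense, without the full $N^{-1/2}$ gain of genuine concentration, so one interpolates between the crude deterministic bound $|R-\E\langle R\rangle|\le 2NK$ and the refined $L^2$-type estimate in terms of $\Delta\bar F_N$, $|h^{-1}|$ and $\E|\nabla F_N-\nabla\bar F_N|^2$; applying Hölder's and Young's inequalities, reinstating the prefactor $N^{-1}$ and squaring yields exactly the right-hand side of \eqref{e.approx.hj}. I expect the main obstacle to be the bookkeeping in this last stage: differentiating the matrix square root $h\mapsto\sqrt{2h}$, tracking the precise powers of $N$ and of $h^{-1}$ through the integration-by-parts computations, and keeping every estimate uniform in $(t,h)$ --- unlike coarser ``approximate equation'' lemmas, which only require a bound after integrating in $h$, here $\kappa(h)$ and $\Delta\bar F_N$ must be retained explicitly rather than absorbed.
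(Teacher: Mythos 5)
This proposition is not proved in the present paper: the authors explicitly ``recall'' it from \cite[Proposition~3.1]{HBJ} (citing also \cite[Proposition~1.2]{mourrat2019hamilton}), so there is no in-paper proof to hold your attempt against; the detailed argument lives in the companion paper \cite{HBJ}.

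As a reconstruction of that argument, your outline does have the right skeleton --- the exact identity $\partial_t\bar F_N - \H(\nabla\bar F_N) = N^{-\mathsf{p}}\big(\E\langle\H(x^\intercal x')\rangle - \H(\E\langle x^\intercal x'\rangle)\big)$, the Lipschitz bound on $\H$ made available by \eqref{e.supp}, the reduction to concentration of the overlap matrix, its split into thermal and disorder fluctuations, the Gaussian integration by parts against $Z$ as the source of the $|h^{-1}|$ and $\kappa(h)$ factors, and a final interpolation producing the fractional powers --- but it is too loose exactly where the work is. The identity you extract from the proof of Lemma~\ref{l.F_N.conv} by taking $s=0$ controls only $\E\langle|\mathrm{sym}(\bar x^\intercal\bar x')|^2\rangle$ (with $\bar x := x - \langle x\rangle$), and waving at ``the antisymmetric part being handled like the $\langle x\rangle$-terms'' does not close that gap. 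The identification $\nabla F_N = \tfrac{1}{N}\langle x\rangle^\intercal\langle x\rangle$ is valid only after taking the expectation over disorder (this is precisely \eqref{e.grad_F_N}); for the random $\nabla F_N$ the integration by parts leaves $Z$-dependent correction terms, and controlling those is exactly what the Hessian estimate \cite[(3.13)]{HBJ}, quoted in Step~2 of the proof of Theorem~\ref{t.cvg}, is for. Finally, the exponent $1/4$ is not a generic outcome of ``interpolate and apply H\"older''; it emerges from a specific pointwise-versus-$h$-integrated trade-off, and your outline does not explain why that particular power appears. To make this rigorous you should follow \cite[Proposition~3.1]{HBJ} directly rather than rederive it.
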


\begin{proof}[Proof of Theorem \ref{t.cvg}]
Since $\bar F_N$ is Lipschitz uniformly in $N$ by Lemma~\ref{l.lip_unif_N}, the Arzel\'a-Ascoli theorem implies that, for every subsequence of $(\bar F_N)_{n\in\N}$, there is a further subsequence converging to some function $f$ in the local uniform topology. It suffices to show that $f$ is a viscosity solution to \eqref{e.HJ_tensor} and the uniqueness is ensured by Proposition~\ref{p.comp}. For convenience, we assume that the whole sequence $(\bar F_N)_{N\in\N}$ converges to $f$.

Lemmas~\ref{l.lip_unif_N} and \ref{l.F_N.conv} {(see also \eqref{e.bar_F_nondec})} ensure that $f$ is nondecreasing, Lipschitz and convex. Since $\bar F_N$ and $f$ are convex, we have
\begin{align*}
\lim_{N\to\infty}(\partial_t, \nabla)\bar F_N(t,h)    = (\partial_t, \nabla)f(t,h)
\end{align*}
at every differentiable point $(t,h)$ of $f$ (indeed, any limit point of $(\partial_t, \nabla)\bar F_N(t,h)$ must belong to the subdifferential of $f$ at $(t,h)$, which is a singleton if $f$ is differentiable at~$(t,h)$). This along with Lemma~\ref{l.F_N.order} yields that $f$ has nondecreasing gradients.
Let $(t,h)\in(0,\infty)\times \S^K_{++}$ and $\phi\in C^\infty((0,\infty)\times\S^K_+)$ be such that $f-\phi$ has a strict local maximum at $(t,h)$. By Corollary~\ref{c.vis}, it suffices to show that 
\begin{equation}
\label{e.eq.phi}
 (\partial_t\phi-\H(\nabla\phi))(t,h)=0.
\end{equation}
Since $\bar F_N$ converges locally uniformly to $f$, there exists $(t_N,h_N) \in [0,\infty) \times \S^K_+$ such that $\bar F_N - \phi$ has a local maximum at $(t_N,h_N)$, and $(t_N,h_N)$ converges to $(t,h)$ as $N$ tends to infinity. Since $(t,h) \in (0,\infty) \times \S^K_{++}$, each $(t_N,h_N)$ also ultimately belongs to $(0,\infty) \times \S^K_{++}$, and without loss of generality, we can assume that every $(t_N,h_N)$ remains a positive distance away from the boundary of $[0,\infty) \times \S^K_+$, uniformly over $N$. Notice that 
\begin{equation}  
\label{e.gradient.equalities}
(\dr_t \bar F_N - \dr_t \phi)(t_N,h_N) = 0 \quad \text{ and } \quad (\nabla \bar F_N - \nabla \phi)(t_N,h_N) = 0.
\end{equation}
Throughout the rest of the proof, we use the letter $C < \infty$ to denote a constant whose value may change from one occurrence to the next, and is allowed to depend on $(t,h)$ and~$\phi$. We decompose the argument into three steps.

\textit{Step 1.} We show that for every $h' \in \S^K_+$ with $|h'| \le C^{-1}$, we have
\begin{equation}
\label{e.hessian.est}
0 \le \bar F_N(t_N,h_N + h') - \bar F_N(t_N,h_N ) - h' \cdot \nabla \bar F_N(t_N,h_N) \le C |h'|^2.
\end{equation}
The first inequality follows from the convexity of $\bar F_N$. To derive the second inequality, we start by writing Taylor's formula:
\begin{multline}  
\label{e.taylor}
\bar F_N(t_N,h_N + h') - \bar F_N(t_N,h_N ) 
\\= h' \cdot \nabla \bar F_N(t_N,h_N) + \int_0^1 (1-s) h'\cdot \nabla\Ll(h' \cdot \nabla \bar F_N\Rr)(t_N,h_N + s h') \, \d s.
\end{multline}
The same formula also holds if we substitute $\bar F_N$ by $\phi$ throughout. Since $\bar F_N - \phi$ has a local maximum at $(t_N,h_N)$, we have for every $|h'| \le C^{-1}$ that
\begin{equation*}  
\bar F_N(t_N,h_N + h') - \bar F_N(t_N,h_N ) \le \phi(t_N,h_N + h') - \phi(t_N,h_N ) .
\end{equation*}
Using also \eqref{e.gradient.equalities}, we obtain that
\begin{equation*}  
\int_0^1 (1-s)  h'\cdot \nabla\Ll(h' \cdot \nabla \bar F_N\Rr)(t_N,h_N + s h')  \, \d s \le \int_0^1 (1-s)  h'\cdot \nabla\Ll(h' \cdot \nabla \phi\Rr)(t_N,h_N + s h')  \, \d s.
\end{equation*}
Since the function $\phi$ is smooth, the right side of this inequality is bounded by $C |h'|^2$. Using \eqref{e.taylor} once more, we obtain \eqref{e.hessian.est}. 

\textit{Step 2.} 
Let
\begin{equation*}  
D := \Ll\{ (t',h') \in [0,\infty) \times \S^K_+ \ : \ |t'-t| \le C^{-1} \text{ and } |h'-h| \le C^{-1} \Rr\} .
\end{equation*}
In this step, we show that 
\begin{equation}
\label{e.concentration}
\E \Ll[ |\nabla F_N - \nabla \bar F_N|^2(t_N,h_N) \Rr] \le C \Ll(\E \Ll[ \sup_D |F_N - \bar F_N|^2  \Rr]\Rr)^\frac 1 2.
\end{equation}
We recall from \cite[(3.13)]{HBJ} that, for every $a \in \S^K$ and $(t',h') \in [0,\infty) \times \S^K_+$ such that $|h'-h| \le C^{-1}$, we have
\begin{equation*}  
a \cdot \nabla (a \cdot \nabla F_N)(t',h') \ge -C |a|^2 \frac{|Z|}{\sqrt{N}},
\end{equation*}
and that {$Z\in\R^{N\times K}$ is the matrix of independent standard Gaussians appearing in the definition of $\overline{Y}$ (see the second paragraph in Section~\ref{s.setting}).} {Applying Taylor's formula as in Step $1$, it is readily verified that} for every $|h'| \le C^{-1}$, we have
\begin{equation*}  
F_N(t_N,h_N+ h') \ge F_N(t_N,h_N) + h' \cdot \nabla F_N(t_N,h_N) - C |h'|^2 \frac{|Z|}{\sqrt{N}}.
\end{equation*}
Combining this with \eqref{e.hessian.est}, we obtain that, for every $|h'| \le C^{-1}$,
\begin{equation*}  
h' \cdot \Ll(\nabla F_N - \nabla \bar F_N\Rr)(t_N,h_N)\le 2 \sup_D |F_N - \bar F_N| + C |h'|^2 \Ll(1+\frac{|Z|}{\sqrt{N}}\Rr).
\end{equation*}
For some deterministic $\lambda \in [0,C^{-1}]$ to be determined, we fix {the random matrix}
\begin{equation*}  
h' := \lambda \frac{\Ll(\nabla F_N - \nabla \bar F_N\Rr)(t_N,h_N)}{|\Ll(\nabla F_N - \nabla \bar F_N\Rr)(t_N,h_N)|},
\end{equation*}
so that 
\begin{equation*}  
\lambda |\nabla F_N - \nabla \bar F_N|(t_N,h_N)  \le 2 \sup_D |F_N - \bar F_N|  + C \lambda^2 \Ll(1+\frac{|Z|}{\sqrt{N}}\Rr).
\end{equation*}
Squaring this expression and taking the expectation yields
\begin{equation*}  
\lambda^2 \E \Ll[ |\nabla F_N - \nabla \bar F_N|^2(t_N,h_N) \Rr] \le 8 \E \Ll[ \sup_D |F_N - \bar F_N|^2  \Rr]  + C \lambda^4 \E \Ll[  \Ll(1+\frac{|Z|}{\sqrt{N}}\Rr)^2\Rr] .
\end{equation*}
Since $\E[|Z|^2] = NK$, choosing $\lambda^4 = \E \Ll[ \sup_D |F_N - \bar F_N|^2  \Rr] $ yields \eqref{e.concentration}.

\textit{Step 3.} Recall that we assume that $\E \Ll[ \sup_D |F_N - \bar F_N|^2  \Rr] $ tends to zero as $N$ tends to infinity. By Proposition~\ref{p.approx.hj}, \eqref{e.hessian.est}, and \eqref{e.concentration}, we obtain that
\begin{equation*}  
\lim_{N \to \infty} \Ll(\partial_t\bar F_N-\H(\nabla\bar F_N)\Rr)(t_N,h_N) = 0.
\end{equation*}
Using also \eqref{e.gradient.equalities} and the fact that the function $\phi$ is smooth, this yields \eqref{e.eq.phi}, and thus completes the proof.
\end{proof}

\subsection{Application}\label{s.app}
We study the model considered in \cite{luneau2019mutual}, which corresponds to \eqref{e.Y} with $L=1$, $\mathsf{p}\in\N$, and $A\in\R^{K^\mathsf{p}\times 1}$ given by $A_{\mathbf j}=1$ if $j_1=j_2=\cdots = j_\mathsf{p}$ and zero otherwise. Here, we used the multi-index notation $\mathbf j=(j_1,j_2,\dots,j_\mathsf{p})\in \{1,\dots,K\}^\mathsf{p}$. Explicitly, this model can be expressed as
\begin{align}\label{e.special}
    Y_{\mathbf i} = \sqrt{\frac{2t}{N^{\mathsf{p}-1}}} \sum_{j=1}^K\prod_{n=1}^\mathsf{p}X_{i_n, j}+W_{\mathbf i},\quad \mathbf i \in \{1,\cdots, N\}^\mathsf{p},
\end{align}
where $X\in \R^{N\times K}$ is assumed to have i.i.d.\ row vectors {with norms bounded by $\sqrt{K}$ almost surely. Hence, the condition in \eqref{e.supp} is satisfied.} For even $\mathsf{p}$, the limit of the free energy associated with this model has been proved to satisfy a variational formula in \cite{luneau2019mutual}. When $\mathsf{p}$ is odd, the situation is more difficult; in  \cite{HBJ}, it was only proven that the limit is bounded above by a variational formula. Here, we will apply Theorem~\ref{main_thm} to treat both even and odd values of $\mathsf{p}$.

Recall the definition of $\H$ in \eqref{def.H}. In this case, the nonlinearity $\H$ is given by
\begin{align}\label{e.H_sp}
    \H(q) = \sum_{k,k'=1}^K(q_{k,k'})^\mathsf{p},\quad\forall q\in \S^K_+.
\end{align}
Since row vectors of $X$ are i.i.d., we have $\bar F_N(0,\cdot)= \bar F_1(0,\cdot)$ for all $N\in\N$. Setting $\psi :=  \bar F_1(0,\cdot)$ and using the formula for $F_N$ in \eqref{e.F_N}, we have
\begin{align}\label{e.psi_sp}
    \psi(h) = \E \log \int_{\R^{1\times K}} \exp\bigg(2h\cdot (x^\intercal X_{1,\cdot})+\sqrt{2h}\cdot(x^\intercal Z)-h\cdot(x^\intercal x) \bigg) \d P(x),\quad\forall h\in\S^K_+,
\end{align}
where $P$ is the law of the first row vector $X_{1,\cdot} = (X_{1,k})_{1\leq k\leq K}$. By Lemma~\ref{l.lip_unif_N}, $\psi$ is $C^1$.
The concentration condition $\lim_{N\to\infty}\E\|F_N-\bar F_N\|^2_{L^\infty(D)}=0$ for each compact $D\subset [0,\infty)\times \S^K_+$ is proved in \cite[Lemma C.1]{HBJ}. Hence, the next result follows from Theorem~\ref{main_thm}.
\begin{corollary}
Under the assumption \eqref{e.supp}, in the model described above with $\mathsf{p}\in\N$, it holds that, for every $(t,h)\in[0,\infty)\times\S^K_+$,
\begin{align*}
    \lim_{N\to\infty}\bar F_N(t,h) = \sup_{h''\in\S^K_+}\inf_{h'\in\S^K_+}\big\{h''\cdot (h-h')+\psi(h')+t\H(h'')\big\},
\end{align*}
for $\H$ and $\psi$ given in \eqref{e.H_sp} and \eqref{e.psi_sp}, respectively.
\end{corollary}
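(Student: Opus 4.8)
The plan is to verify that the specific model given by \eqref{e.special} falls within the scope of Theorem~\ref{main_thm}, and then simply invoke that theorem. The corollary is therefore not a new hard result but an application; the content lies entirely in checking the two hypotheses of Theorem~\ref{main_thm} and in computing the nonlinearity $\H$ explicitly.

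First I would record that the matrix $A \in \R^{K^{\mathsf p} \times 1}$ described — with $A_{\mathbf j} = 1$ iff $j_1 = \cdots = j_{\mathsf p}$ — gives, via the definition \eqref{def.H} of $\H$, the formula $\H(q) = (AA^\intercal) \cdot q^{\otimes \mathsf p}$. Here $AA^\intercal$ is the $K^{\mathsf p} \times K^{\mathsf p}$ matrix whose $(\mathbf j, \mathbf j')$ entry is $1$ precisely when $j_1 = \cdots = j_{\mathsf p}$ and $j'_1 = \cdots = j'_{\mathsf p}$, and $\big(q^{\otimes \mathsf p}\big)_{\mathbf j,\mathbf j'} = \prod_{n=1}^{\mathsf p} q_{j_n, j'_n}$. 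Taking the entrywise inner product and using that only the ``diagonal'' indices $\mathbf j = (k,\dots,k)$, $\mathbf j' = (k',\dots,k')$ contribute, we get $\H(q) = \sum_{k,k'=1}^K (q_{k,k'})^{\mathsf p}$, which is exactly \eqref{e.H_sp}. This is a short direct computation.

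Next I would check the first hypothesis of Theorem~\ref{main_thm}, namely that $\bar F_N(0,\cdot)$ converges pointwise to a $C^1$ function $\psi$. Since the rows of $X$ are i.i.d., the free energy at $t = 0$ decouples over the $N$ rows, so $\bar F_N(0,\cdot) = \bar F_1(0,\cdot) =: \psi$, independent of $N$; plugging $N = 1$ and $t = 0$ into \eqref{e.F_N} gives the explicit formula \eqref{e.psi_sp}. That $\psi$ is $C^1$ follows from Lemma~\ref{l.lip_unif_N}, which asserts that each $\bar F_N$ (in particular $\bar F_1$) is $C^1$ on $[0,\infty) \times \S^K_+$, hence its restriction to $\{0\} \times \S^K_+$ is $C^1$. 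I should also note that the support condition \eqref{e.supp} holds here because $|X|^2 = \sum_i |X_{i,\cdot}|^2 \le N K$ by the assumption that each row has norm at most $\sqrt K$. The second hypothesis — local uniform concentration $\lim_N \E \|F_N - \bar F_N\|^2_{L^\infty(D)} = 0$ for compact $D$ — is precisely the content of \cite[Lemma~C.1]{HBJ}, which applies since the rows of $X$ are i.i.d.\ and bounded, so I would just cite it.

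With all hypotheses verified, Theorem~\ref{main_thm} applies verbatim and yields the stated variational formula with $\H$ and $\psi$ as in \eqref{e.H_sp} and \eqref{e.psi_sp}. There is no real obstacle here; the only point requiring a modicum of care is the explicit identification of $AA^\intercal \cdot q^{\otimes \mathsf p}$, where one must keep track of the multi-index bookkeeping and confirm that the off-diagonal multi-indices of $AA^\intercal$ vanish so that the sum collapses to the clean double sum over $k, k'$. Everything else is citation or a one-line reduction.
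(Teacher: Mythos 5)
Your proposal is correct and follows exactly the paper's own route: compute $\H$ from \eqref{def.H}, observe that the i.i.d.\ rows give $\bar F_N(0,\cdot)=\bar F_1(0,\cdot)=\psi$ which is $C^1$ by Lemma~\ref{l.lip_unif_N}, cite \cite[Lemma~C.1]{HBJ} for the concentration hypothesis, and invoke Theorem~\ref{main_thm}. The only difference is that you spell out the multi-index computation of $(AA^\intercal)\cdot q^{\otimes\mathsf p}$ and the verification of \eqref{e.supp}, which the paper leaves implicit.
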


\subsection{{Simplification of the variational formula}}

We describe a way of simplifying the formula~\eqref{e.main_thm} under the additional assumption that the mapping $\H$ in \eqref{def.H} only depends on the diagonal entries of its argument. 

We introduce the linear map $\diag : \R^K\to \S^K$ defined by $\diag x = \mathrm{diag}(x_1,\dots,x_K)$. Its adjoint $\diag^*:\S^K\to\R^K$ is given by $\diag^* h = (h_{11},\dots,h_{KK})$ for $h\in \S^K$. Note that $\diag^*\diag$ is the identity map on $\R^K$, and $\diag\diag^*h = \mathrm{diag}(h_{11},\ldots,h_{KK})$ for every $h\in \S^K$. The additional assumption on $\H$ can be reformulated as
\begin{align}\label{e.assump_H_diag}
    \H(q) = \H(\diag\diag^*q),\quad\forall q\in \S^K_+.
\end{align}
For $x,x'\in\R^K$, we write $x\cdot x' = \sum_{i=1}^Kx_ix'_i$. We set $\R_+^K= [0,\infty)^K$. Note that $\diag (\R^K_+)$ contains exactly the diagonal matrices in $\S^K_+$. For $F_N$ given in \eqref{e.F_N}, we want to show that, under the assumptions of Theorem~\ref{main_thm} and for every $t \ge 0$ and $ x \in \R_+^K$, we have
\begin{align}
\label{e.main_thm_simpl}
    \lim_{N\to\infty}\bar F_N(t,\diag x) = \sup_{x''\in\R^K_+}\inf_{x'\in\R^K_+}\big\{x''\cdot \left(x-x'\right)+\psi\left(\diag x'\right)+t\H\left(\diag x''\right)\big\}.
\end{align}
In particular, setting $x=0$, we obtain a simpler representation of the limit of the original free energy $F^\circ_N$.

The proof of this statement can be achieved by working with the following Hamilton--Jacobi equation:
\begin{align}\label{e.g_hj_diag}
    \partial_t g - \H(\diag \nabla g) =0,\quad \text{on }[0,\infty)\times\R^K_+.
\end{align}
The well-posedness of this equation and the representation of the solution by the Hopf formula can be established in a similar way (see \cite[Section~2]{chen2022hamilton}). A corresponding identification criterion for solutions, as stated in Proposition~\ref{p.weak-tensor}, can also be obtained. There, the partial order defining the notion of nondecreasingness, as in \eqref{e.partial_oder_1} and \eqref{e.partial_oder_2}, is now induced by the cone $\R^K_+$. Lastly, for any differentiable function $\phi:\S^K_+\to\R$, we can verify that, 
\begin{align*}
    \nabla \phi^\diag( x) = \diag^* \nabla \phi(h)\big|_{h = \diag x},\quad\forall x \in \R^K_+,
\end{align*}
where $\phi^\diag:\R^K_+\to\R$ is given by $\phi^\diag = \phi(\diag\,\cdot\,)$. Hence, setting $\bar F^\diag_N(t,x) = \bar F_N(t,\diag x)$, and using Proposition~\ref{p.approx.hj} and \eqref{e.assump_H_diag}, we can see that $\bar F^\diag_N$ approximately solves \eqref{e.g_hj_diag} and that a similar estimate in Proposition~\ref{p.approx.hj} holds for $\bar F^\diag_N$. Then, the same argument as in the proof of Theorem~\ref{t.cvg} yields that $\bar F^\diag_N$ converges to the unique viscosity solution of \eqref{e.g_hj_diag} with initial condition $\psi(\diag\,\cdot\,)$. Due to the convexity of $\psi(\diag\,\cdot\,)$, the solution admits a representation by the Hopf formula, which is exactly the right-hand side in \eqref{e.main_thm_simpl}.

As a concrete example inspired by \cite{accm}, suppose as in the previous subsection that $X \in \R^{N\times K}$ has i.i.d.\ row vectors with norm bounded by $\sqrt{K}$, but this time we observe, for each $i,j \in \{1,\ldots, N\}$ and $k \in \{1,\ldots, K-1\}$, the quantity
\begin{equation*}  
\sqrt{\frac{2t}{N}} X_{i,k} X_{j,k+1} + W_{i,j,k},
\end{equation*}
where $(W_{i,j,k})_{i,j \le N, k < K}$ are independent standard Gaussians, independent of $X$. This can be mapped into our setting by choosing $\mathsf{p}=2$, $L = K-1$, $A \in \R^{K^2 \times (K-1)}$ given by $A_{(k,l),r} = 1$ if $r = k = l-1$ and zero otherwise. With this choice of $A$, the function $\H$ takes the form
\begin{equation*}  
\H(q) = \sum_{k = 1}^{K-1} q_{k,k} \, q_{k+1,k+1} = \H(\diag\diag^* q), \qquad \forall q \in \S^K_+.
\end{equation*}
We thus obtain that the limit free energy $F_N^\circ(t) = F_N(t,0)$ is given by
\begin{equation}  
\label{e.formula.physicists}
\lim_{N \to \infty} F_N^\circ(t) = \sup_{x'\in\R^K_+}\inf_{x\in\R^K_+}\Ll\{\psi^\diag\!\left(x\right)- x \cdot x' +t \sum_{k = 1}^{K-1} x'_k x'_{k+1}\Rr\}.
\end{equation}
Moreover, under the additional assumption that the coordinates of the vector $(X_{1,k})_{1 \le k \le K}$ are independent, the initial condition $\psi^\diag$ can be decomposed into a sum of functions of one variable: there exist convex and nondecreasing functions $\psi_1,\ldots, \psi_K : \R_+ \to \R$ such that for every $x \in \R_+^K$,
\begin{equation*}  
\psi^\diag\!(x) = \sum_{k = 1}^K \psi_k(x_k). 
\end{equation*}
(Cases in which different layers have different lengths, say for instance $X_{i,k} = 0$ for every $i > \al_k N$ for some fixed $\al_k \in (0,1)$, can be covered as well, and this translates into multiplying each $\psi_k$ by a suitable scalar.) Under these conditions, the formula~\eqref{e.formula.physicists} can be further simplified, as we now explain. For each $x \in \R^K$, we denote by $x_o = (x_1,x_3,\ldots, x_{2\cdot\lfloor (K-1)/2 \rfloor + 1})$ and $x_e = (x_2, x_4,\ldots, x_{2 \cdot \lfloor K/2 \rfloor})$ respectively the odd and even coordinates of the vector $x$, and for each $k \in \{1,\ldots, K\}$ and $y \ge 0$, we set
\begin{equation*}  
\psi_k^*(y) := \sup_{x\ge 0} (xy - \psi_k(x)).
\end{equation*}
By \cite[Theorem~12.4]{rockafellar1970convex}, we have that $\psi_k^{**} = \psi_k$. Moreover, we can write
\begin{equation*}  
\inf_{x_e} \Ll\{ \sum_{k = 1}^K \psi_k(x_k) - x \cdot x' + t \sum_{k = 1}^{K-1} x'_k x'_{k+1}\Rr\} = \sum_{k \text{ odd}} \Ll(\psi_k(x_k) -x_k x_k'\Rr) - \sum_{k \text{ even}} \psi^*_k(x_k') + t \sum_{k = 1}^{K-1} x'_k x'_{k+1},
\end{equation*}
and observe that the optimization problems over $x_o$ and $x_e'$ are separated. We can thus interchange $\sup_{x'_e}$ and $\inf_{x_o}$ to get that 
\begin{align*}  
\lim_{N \to \infty} F_N^\circ(t) & = \sup_{x'_o} \inf_{x_o} \sup_{x'_e} \Ll\{ \sum_{k \text{ odd}} \Ll(\psi_k(x_k) -x_k x_k'\Rr) - \sum_{k \text{ even}} \psi_k^*(x_k') + t \sum_{k = 1}^{K-1} x'_k x'_{k+1} \Rr\} 
\\
& = \sup_{x'_o} \inf_{x_o} \Ll\{\sum_{k \text{ odd}} \Ll( \psi_k(x_k)- x_k x_k'\Rr) + \sum_{k \text{ even}} \psi_k\Ll(t x_{k-1}' +  t x_{k+1}'\Rr)\Rr\},
\end{align*}
using that $\psi_k^{**} = \psi_k$, and with the understanding that $x_{K+1} = 0$. 
Similar formulas were first obtained in \cite{accm}.

\appendix

\section{On convex viscosity solutions}\label{a.vis}

The goal of this section is to demonstrate the workings of a convenient uniqueness criterion for Hamilton-Jacobi equations, in the simpler context of equations posed on $[0,\infty) \times \R^d$. This criterion states that, if the function under consideration is \textit{convex}, then we can assert that it is the viscosity solution of some Hamilton-Jacobi equation as soon as it satisfies the equation on a dense subset and the initial condition is of class $C^1$. This criterion is generalized to equations posed on $[0,\infty) \times \S^K_+$ in Proposition~\ref{p.weak-tensor}.

Let $\msf H: \R^d \to \R$ be a smooth function. We start by recalling the notion of viscosity solutions to
\begin{equation}  
\label{e.hj}
\dr_t f - \msf H(\nabla f) = 0 \qquad \text{on } \ [0,\infty) \times \R^d.
\end{equation}

\bigskip
\bigskip
\bigskip
\bigskip
\bigskip
\bigskip
\bigskip
\bigskip
\bigskip

\begin{definition}
\leavevmode
\begin{enumerate}
    \item A continuous function $f:[0,\infty)\times \R^d\to \R$ is a viscosity subsolution to \eqref{e.HJ_tensor} if for every $(t,h) \in (0,\infty)\times \R^d $ and every smooth $\phi:(0,\infty)\times \R^d\to \R$ such that $f-\phi$ has a local maximum at $(t,h)$, we have
\begin{align*}
    \big(\partial_t \phi - \H(\nabla\phi)\big)(t,h)\leq 0.
\end{align*}

\item  A continuous function $f:[0,\infty)\times \R^d\to \R$ is a viscosity supersolution to \eqref{e.HJ_tensor} if for every $(t,h) \in (0,\infty)\times \R^d$ and every smooth $\phi:(0,\infty)\times \R^d \to \R$ such that $f-\phi$ has a local minimum at $(t,h)$, we have
\begin{align*}
    \big(\partial_t \phi - \H(\nabla\phi)\big)(t,h)\geq 0.
\end{align*}

\item A continuous function $f:[0,\infty)\times \R^d\to \R$ is a viscosity solution to \eqref{e.HJ_tensor} if $f$ is both a viscosity subsolution and supersolution.

\end{enumerate}
\end{definition}

The main goal of this section is to prove the following proposition.
\begin{proposition}
\label{p.weak}
Let $f : [0,\infty) \times \R^d \to \R$ be  Lipschitz and convex. Suppose that $f$ satisfies \eqref{e.hj} on a dense subset of $(0,\infty) \times \Rd$, and that the initial condition $f(0,\cdot)$ is~$C^1$. Under these conditions, the function $f$ is a viscosity solution to \eqref{e.hj} with initial condition $f(0,\cdot)$. 
\end{proposition}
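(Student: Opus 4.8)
\textbf{Proof proposal for Proposition~\ref{p.weak}.}
The plan is to verify the subsolution and supersolution properties separately, exploiting convexity to convert the "touching test function" conditions into statements about the subdifferential $\dr f$, and then to use the density hypothesis together with the $C^1$ initial data to identify the relevant one-sided derivatives. First I would dispose of the subsolution property, which is the easy half: if $\phi$ is smooth and $f-\phi$ has a local maximum at $(t,x)\in(0,\infty)\times\Rd$, then convexity of $f$ forces $f$ to be differentiable at $(t,x)$ with $(\dr_t f,\nabla f)(t,x)=(\dr_t\phi,\nabla\phi)(t,x)$ — indeed the inequality $f(t',x')-f(t,x)\le (\dr_t\phi,\nabla\phi)(t,x)\cdot(t'-t,x'-x)+o(|{\cdot}|)$ says the supporting affine functional at $(t,x)$ is unique, so $\dr f(t,x)$ is a singleton. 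Since $f$ is Lipschitz it is differentiable a.e., and one can approximate $(t,x)$ by points of the dense set where the equation holds; passing to the limit along differentiability points (using that gradients of a convex function converge at points of differentiability of the limit, or directly that any limit of $(\dr_t f,\nabla f)$ at nearby points lies in $\dr f(t,x)=\{(\dr_t f,\nabla f)(t,x)\}$) gives $\big(\dr_t f-\msf H(\nabla f)\big)(t,x)=0$, hence $\le 0$.

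The supersolution half is the substantive part. Fix $(t,x)$ and, by Lemma~\ref{l.subsol_subd}, reduce to showing that every $(a,p)\in\dr f(t,x)$ satisfies $a-\msf H(p)\ge 0$. The idea is to "propagate the subgradient back to time zero". For $\eps>0$ consider the perturbed function $f_\eps(s,y)=f(s,y)+\eps\sqrt{1+|y|^2}$ and minimize $y\mapsto f_\eps(0,y)-y\cdot p$ over $\Rd$; using the subgradient inequality $f(0,y)-f(t,x)\ge (a,p)\cdot(-t,y-x)$ one checks the objective is coercive, so a minimizer $y_\eps$ exists, and a standard $\Gamma$-type argument shows $\eps\sqrt{1+|y_\eps|^2}\to 0$. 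At the minimizer, $p\in\dr\big(f(0,\cdot)+\eps\sqrt{1+|\cdot|^2}\big)(y_\eps)$, and since $f(0,\cdot)$ is $C^1$ and the perturbation is smooth, Lemma~\ref{l.convex+diff} gives $p=\nabla f(0,\cdot)(y_\eps)+O(\eps)=:p_\eps+O(\eps)$ with $p_\eps=\nabla f(0,y_\eps)$; crucially $(0,p_\eps)\in\dr f(0,y_\eps)$ because $f\ge f(0,\cdot)$ pointwise in $s\ge0$ (monotonicity in time is automatic here from the equation on a dense set, or can be built in). Now because $f$ satisfies the equation on a dense set, the same limiting argument as in the subsolution step lets us choose $b_\eps$ with $(b_\eps,p_\eps)\in\dr f(0,y_\eps)$ and $b_\eps-\msf H(p_\eps)=0$. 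Finally, apply convexity of $f$ along the segment from $(0,y_\eps)$ to $(t,x)$: the function $h(\lambda)=f\big(\lambda(t,x)+(1-\lambda)(0,y_\eps)\big)$ is convex on $[0,1]$, its right derivative at $0$ is $\ge b_\eps t+p_\eps\cdot(x-y_\eps)$ by $(b_\eps,p_\eps)\in\dr f(0,y_\eps)$, and its left derivative at $1$ is $\le a t+p\cdot(x-y_\eps)$ by $(a,p)\in\dr f(t,x)$; convexity gives $h'_+(0)\le h'_-(1)$, whence $a\ge b_\eps+o(1)=\msf H(p_\eps)+o(1)=\msf H(p)+o(1)$ using continuity of $\msf H$ and $p_\eps\to p$. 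Letting $\eps\to0$ yields $a\ge\msf H(p)$, as needed.

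The main obstacle is the supersolution step, and within it the delicate point is controlling the minimizer $y_\eps$ and the term $\eps\sqrt{1+|y_\eps|^2}$: without the $C^1$ assumption on $f(0,\cdot)$ one cannot cleanly write $p\approx\nabla f(0,y_\eps)$ and lift it to a subgradient of $f$ at $(0,y_\eps)$ lying in the "right direction" to run the convexity-of-$h$ comparison, and without convexity of $f$ the segment argument collapses entirely. The perturbation $\eps\sqrt{1+|\cdot|^2}$ is chosen (rather than, say, $\eps|\cdot|$) precisely so that it is smooth everywhere and Lemma~\ref{l.convex+diff} applies verbatim. Everything else — existence of minimizers, the $\limsup$ identity, continuity of $\msf H$, Rademacher's theorem — is routine. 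Note that in the Euclidean setting treated here there is no boundary, so the "lifting" of $p$ to $(b,p)\in\dr f(0,\cdot)$ is immediate and one does not need the analogue of Lemma~\ref{l.lift}; the added difficulty of the $\S^K_+$ case handled in Proposition~\ref{p.weak-tensor} is exactly supplying that lift via Proposition~\ref{p.order}.
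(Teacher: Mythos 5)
Your proof is essentially the paper's argument. The subsolution step coincides. For the supersolution step, you handle both the achieved and unachieved infimum cases simultaneously by perturbing with $\eps\sqrt{1+|\cdot|^2}$, whereas the paper's proof of Proposition~\ref{p.weak} first treats the achieved case directly (Step 2) and then perturbs by the non-smooth $\eps|\cdot|$ for the general case (Step 3); your choice of perturbation is in fact what the paper uses in the harder $\S^K_+$ setting of Proposition~\ref{p.weak-tensor}, where it is explicitly chosen so that Lemma~\ref{l.convex+diff} applies verbatim, so this unification is sensible. The one genuine error is the assertion that ``$(0,p_\eps)\in\partial f(0,y_\eps)$ because $f\ge f(0,\cdot)$ pointwise in $s\ge0$ (monotonicity in time is automatic here from the equation on a dense set, or can be built in).'' In the Euclidean setting of Proposition~\ref{p.weak}, $\msf H$ is an arbitrary smooth function with no sign assumption, so $\dr_t f = \msf H(\nabla f)$ on a dense set does \emph{not} imply $\dr_t f\ge0$, and $f$ need not be nondecreasing in $t$; the analogue of Lemma~\ref{l.lift} that uses monotonicity is only needed in the $\S^K_+$ case to cope with the spatial boundary. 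Fortunately this sub-claim is never used in your argument: what you actually need is that a subsequential limit $(b_\eps,p')$ of $(\dr_t f,\nabla f)$ along dense-set points converging to $(0,y_\eps)$ lies in $\partial f(0,y_\eps)$, satisfies $b_\eps-\msf H(p')=0$ by continuity of $\msf H$, and has $p'=\nabla f(0,\cdot)(y_\eps)=p_\eps$ because $f(0,\cdot)$ is $C^1$ (which pins down the spatial component of every element of $\partial f(0,y_\eps)$). That is exactly the paper's reasoning and does not appeal to any monotonicity in time. Deleting the monotonicity remark leaves a correct proof.
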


\begin{remark}  
\label{r.diff.assumption}
In Proposition~\ref{p.weak}, the assumption that $f(0,\cdot)$ be $C^1$ is necessary. Indeed, notice for instance that 
\begin{equation*}  
f(t,x) := |x| - t
\end{equation*}
is convex and satisfies
\begin{equation*}  
\dr_t f + |\nabla f|^2 = 0
\end{equation*}
at every point of differentiability of $f$. However, since the null function is clearly a solution, the statement that $f$ is also a solution would contradict the maximum principle. Instead, the viscosity solution to this equation with same initial condition is given by the Hopf-Lax formula
\begin{equation*}  
(t,x) \mapsto \inf_{y \in \R} \Ll( |y| + \frac{|y-x|^2}{4t} \Rr) = 
\Ll\{
\begin{array}{ll}  
\frac{|x|^2}{4t} & \text{if } |x| \le 2t, \\
|x| - t & \text{if } |x| > 2t.
\end{array}
\Rr.
\end{equation*}
\end{remark}
\begin{proof}[Proof of Proposition~\ref{p.weak}]
Recall the definition of subdifferential in \eqref{e.def.subdiff}. We decompose the proof into three steps. 

\textit{Step 1.} 
We check that $f$ must be a subsolution to \eqref{e.hj}. Let $\phi \in C^\infty((0,\infty) \times \R^d)$, and $(t,x) \in (0,\infty) \times \R^d$ be such that $f-\phi$ has a  local maximum at $(t,x)$. We then have
\begin{equation*}  
f(t',x') - f(t,x) \le (t'-t) \dr_t \phi(t,x) + (x'-x)\cdot  \nabla \phi(t,x) + o(|t'-t| + |x'-x|).
\end{equation*}
This {along with the convexity of $f$} implies that the subdifferential $\dr f(t,x)$ is the singleton $\{(\dr_t \phi,\nabla  \phi)(t,x)\}$, and thus that $f$ is differentiable at $(t,x)$, with $(\dr_tf ,\nabla f )(t,x) = (\dr_t\phi,\nabla\phi)(t,x)$. Using similar arguments as in Remark~\ref{r.ae.diff}, we deduce that 
\begin{equation*}  
\Ll(\dr_t \phi - \msf H(\nabla \phi)\Rr)(t,x) = \Ll(\dr_t f - \msf H(\nabla f)\Rr)(t,x) = 0,
\end{equation*}
as desired.

\textit{Step 2.} In the next two steps, we show that $f$ is a supersolution to \eqref{e.hj}. Let  $(a,p) \in \dr f(t,x)$. In view of Lemma~\ref{l.subsol_subd}, it is sufficient to show that
\begin{equation}
\label{e.cond.super}
a - \msf H(p) \ge 0.
\end{equation}
Since $(a,p) \in \dr f(t,x)$ {and $f$ is convex}, we have for every $(t',x') \in [0,\infty) \times \R^d$ that
\begin{equation*}  
f(t',x') \ge f(t,x) +  (t'-t) a + (x'-x) \cdot p.
\end{equation*}
In particular, the mapping $y \mapsto f(0,y) - y \cdot p$ is bounded from below. In this step, we assume that the infimum 
\begin{equation}  
\label{e.infimum}
\inf_{y \in \R^d} \Ll(f(0,y) - y \cdot p\Rr)
\end{equation}
is achieved, and we denote by $y$ a point realizing the infimum. By arguing as in Remark~\ref{r.ae.diff}, we see that there exists $(b,p') \in \dr f(0,y)$ such that $b - \msf H(p') = 0$. Since $f(0,\cdot)$ is $C^1$ and $y$ realizes \eqref{e.infimum}, we must have $p' = \dr_y f(0,y) = p$, and thus $(b,p) \in \dr f(0,y)$ with $b - \msf H(p) = 0$.

{Due to the convexity of $f$,}  the mapping $g : \lambda \mapsto f \Ll( \lambda (t,x) + (1-\lambda)(0,y) \Rr)$ is convex over the interval $[0,1]$. Since $(b,p) \in \dr f(0,y)$, the right derivative of $g$ at $0$ satisfies
\begin{equation*}  
g'_+(0) \ge b t + p\cdot(x-y).
\end{equation*}
{Due to $(a,p)\in \partial f(t,x)$, the left derivative at $1$ satisfies}
\begin{equation*}  
g'_-(1) \le a t + p \cdot(x-y). 
\end{equation*}
By convexity of $g$, we must have $g'_+(0) \le g'_-(1)$, and thus $a \ge b$. Recalling that $b - \msf H(p) = 0$, we obtain \eqref{e.cond.super}, as desired. 

\textit{Step 3.} To conclude, there remains to consider the case when the infimum in \eqref{e.infimum} is not achieved. For every $\ep > 0$, we consider 
\begin{equation*}  
\inf_{y \in \Rd} \Ll(f(0,y)  + \ep |y| - y \cdot p\Rr).
\end{equation*}
This infimum is achieved at a point $y_\ep \in \Rd$, and
\begin{equation}  
\label{e.grad.close.to.p}
\Ll|\nabla f(0,y_\ep) - p \Rr| \le \ep.
\end{equation}
Moreover, 
\begin{equation*}  
\limsup_{\ep \to 0} \inf_{y \in \Rd}  \Ll(f(0,y)  + \ep |y| - y \cdot p\Rr) = \inf_{y \in \Rd} \Ll(f(0,y)  - y \cdot p\Rr),
\end{equation*}
and
\begin{equation*}  
f(0,y_\ep) - y_\ep \cdot p \ge \inf_{y \in \Rd} \Ll(f(0,y)  - y \cdot p\Rr),
\end{equation*}
so that
\begin{equation}
\label{e.lim.yep}
\lim_{\ep \to 0} \ep |y_\ep| = 0.
\end{equation}
Following the argument in Step 2, we can find $b_\ep \in \R$ such that $(b_\ep,\nabla f(0,y_\ep)) \in \dr f(0,y_\ep)$ and $b_\ep - \msf H(\nabla f(0,y_\ep)) = 0$. Continuing as in Step 2, we then obtain that 
\begin{equation*}  
b_\ep t + \nabla f(0,y_\ep) \cdot (x-y_\ep) \le a t + p \cdot(x-y_\ep).
\end{equation*}
Using \eqref{e.grad.close.to.p} and \eqref{e.lim.yep}, we deduce that, as $\ep$ tends to zero,
\begin{equation*}  
a \ge b_\ep + o(1). 
\end{equation*}
Recalling that $b_\ep  - \msf H(\nabla f (0,y_\ep)) = 0$, and using again \eqref{e.grad.close.to.p} and the continuity of $\msf H$, we obtain \eqref{e.cond.super}.
\end{proof}

\small
\bibliographystyle{abbrv}
\bibliography{tensors}

\end{document}